\def\fr#1/#2.{\frac{#1}{#2}}
\def\tr{^{\mbox{\tiny\sf\bfseries T}}}
\def\H{^{\mbox{\tiny\sf\bfseries H}}}
\DeclareMathOperator{\diag}{diag}
\def\R{{\mathbb R}}
\def\N{{\mathbb N}}
\def\Z{{\mathbb Z}}
\def\C{{\mathbb C}}
\def\M{{\cal M}}
\def\S{{\cal S}}
\def\F{{\cal F}}
\def\Y{{\cal Y}}
\def\1{\mbox{\bf 1}}
\def\0{\mbox{\bf 0}}
\newtheorem{prop}[theorem]{Proposition}
\newtheorem{defn}[theorem]{Definition}
\def\B{\mbox{\small \sf\bfseries B}}
\def\ta{\psline{*-}(0,0)(0,0)}
\def\tb{\psline{*-*}(0,0)(0,.3)}
\def\tc{\psline{*-*}(0,0)(.2,.3)\psline{-*}(0,0)(-.2,.3)}
\def\td{\psline{*-*}(0,0)(0,.3)\psline{-*}(0,0.3)(0,.6)}
\def\te{\psline{*-*}(0,0)(.25,.3)\psline{-*}(0,0)(0,.3)
\psline{-*}(0,0)(-.25,.3)}
\def\tf{\psline{*-*}(0,0)(.2,.3)\psline{-*}(0,0)(-.2,.3)
\psline{-*}(0.2,.3)(.2,.6)}
\def\tg{\psline{*-*}(0,0)(0,.3)\psline{-*}(0,0.3)(.2,.6)\psline{-*}(0,0.3)(-.2,.6)}
\def\th{\psline{*-*}(0,0)(0,.3)\psline{-*}(0,0.3)(0,.6)
\psline{-*}(0,0.6)(0,.9)}
\def\ii{\mbox{\bf i}}
\DeclareMathOperator{\re}{Re}
\DeclareMathOperator{\im}{Im}
\def\m{\phantom{-}}
\begin{document}


\newpage
\title{Symmetric general linear  methods}
\author{J.~C.~Butcher \and A.~T.~Hill \and T.~J.~T.~Norton}


\institute{J.~C.~Butcher \at
            Department of Mathematics,
            University of Auckland, NZ \\
              \email{butcher@math.auckland.ac.nz}           
           \and
           A.~T.~Hill  \at
              Department of Mathematical Sciences,
               University of Bath, UK\\
              \email{masath@bath.ac.uk}        
             \and
             T.~J.~T.~Norton \at
              Department of Mathematical Sciences,
               University of Bath, UK\\
              \email{tjtn20@bath.ac.uk}             
}

\date{Received: date / Accepted: date}
\maketitle

\begin{abstract}
The article considers symmetric general linear methods, a class of numerical time integration methods which, like symmetric Runge--Kutta methods, are  applicable to general time--reversible differential equations, not just those derived from separable second--order problems. A definition of time--reversal symmetry is formulated for general linear methods, and criteria are found for the methods to be free of linear parasitism. It is shown that   symmetric parasitism--free methods   cannot be explicit, but  a method of order $4$ is constructed with only one implicit stage. Several characterizations of symmetry are given, and  connections are made with $G$--symplecticity. Symmetric methods are shown to be of even order, a suitable symmetric starting method  is  constructed and shown to be essentially unique. The underlying one--step method is shown to be time--symmetric. Several symmetric methods of order $4$ are constructed and implemented on test problems. The methods are efficient when compared with Runge--Kutta methods of the same order, and invariants of the motion are well--approximated over long time intervals.
\keywords{
time--symmetric general linear methods \and G-symplectic methods \and  multivalue methods \and conservative methods}
\subclass{65L05\and65L07\and65L20}
\end{abstract}

\section{Introduction}\label{sec:intro}
Symmetric general linear methods are a class of multistage multivalue methods with time--reversal symmetry. As we demonstrate, such methods can efficiently integrate the solutions of differential equations which are themselves time--reversible, in such a way that invariants of the motion are preserved over long time intervals. The main aim of this paper is to characterize, construct and test high--order symmetric general linear methods with minimal implicitness and zero parasitic growth--parameters.

Under mild conditions, the flow  associated with a general ordinary differential equation  satisfies the basic time--reversal symmetry $E_{-x}E_x =I$. A Runge--Kutta method is symmetric if it satisfies the analogous property,
\begin{equation}
\M_{-h}\M_{h}=I,\label{eq:basicsymm}
\end{equation}
where $\M_h$ is the map generated by a single step of the method. As shown in \cite{st88}, \cite{hs97}, \eqref{eq:basicsymm} is also sufficient for a Runge--Kutta method to inherit the stronger symmetry of $\rho$--reversibility, when a differential equation has this property. Symmetry implies even order and  leads to  simplifications in the order theory for such methods, \cite{mss99}. Practically, symmetric Runge--Kutta methods are  shown  to perform well for such problems over long time intervals in the book of Hairer, Lubich \& Wanner \cite{hlw}.  However, every irreducible  stage of a symmetric Runge--Kutta method is necessarily implicit, \cite{ste73}, \cite{wan73},\cite[V.2]{hlw}. The most efficient such methods are DIRKs, formed by compositions of the implicit midpoint method, \cite{ssab}, \cite{y90}, \cite{suz90}, \cite{mcl95}. For separable problems originating from a system of second order differential equations, the symplectic Euler and Runge--Kutta--Nystr\"{o}m methods have been generalized to obtain higher order  partitioned Runge--Kutta methods \cite{hlw}, some of which are explicit. The most popular low order method for separable problems is the explicit St\"{o}rmer--Verlet method \cite{ver67}, which may be viewed  as a partitioned Runge--Kutta method, a partitioned linear multistep method, or a non--standard implementation of the leapfrog method.

The properties of standard linear multistep methods and one--leg methods were investigated by Eirola \& Sanz--Serna \cite{es92}, who showed that symmetry is equivalent to $G$--symplecticity in this case. The properties of symmetric multistep methods were further investigated in \cite{css98}. However, Dahlquist \cite{d56} had already shown that the parasitic roots of such methods have non--zero growth--parameters. Hence, symmetric linear multistep and one--leg methods are weakly unstable.

An important class of systems with time--reversal symmetry are of the form
\begin{equation*}
\frac{d^{2}y}{dx^{2}}=f(y),
\end{equation*}
familiar from many examples in Mechanics and other branches of Physics. 
 The classical St\"{o}rmer--Cowell linear multistep methods  \cite{st07}, \cite{cow10}, popular with Astronomers, exploit the special structure of such systems by directly approximating the second derivative. However, only lowest order   method (St\"{o}rmer--Verlet) is symmetric. The articles \cite{d56} and \cite{lw76} made early studies of the  stability properties of  second order multistep methods. New symmetric high order second order methods  were designed and successfully tested in \cite{qt90}. Hairer \& Lubich  \cite{hl04}, \cite{hlw2} used backward error analysis techniques to show that the underlying one--step method is a symmetric approximation of the true solution, and that parasitic components remain under control for long times. 

As a model for  general linear methods, consider  a $k$--step linear multistep method in one--step form. Here,  $\M_h$ may be interpreted as the map 
\begin{equation}\label{eq:symmmult}
\begin{split}
&[y_{n},\,\ldots,\,y_{n+k-1},\,hf_{n},\,\ldots,\,hf_{n+k-1}]\\
&\hspace{3cm} \mapsto  [y_{n+1},\,\ldots,\,y_{n+k},\,hf_{n+1},\,\ldots,\,hf_{n+k}],
\end{split}
\qquad n\in \Z.
\end{equation}
Under the change of variable $m=n+k$, $\M_{-h}^{-1}$ represents the mapping 
\begin{equation*}
\begin{split}
&[y_{m+k-1},\,\ldots,\,y_{m},\,-hf_{m+k-1},\,\ldots,\,-hf_{m}]\\
&\hspace{3cm}\mapsto  [y_{m+k},\,\ldots,\,y_{m+1},\,-hf_{m+k},\,\ldots,\,-hf_{m+1}],
\end{split} \qquad m\in \Z.
\end{equation*}
To equate this mapping with \eqref{eq:symmmult}, a coordinate transform $L$ is needed,  which reverses the order of both the $y$ and $hf$ entries. Furthermore, $L$ must multiply the $hf$ terms by $-1$. (Both these actions of L  are directly related to time--reversal.) Then, the following modification of \eqref{eq:basicsymm} holds:
\begin{equation}
L\M_{-h}L\M_{h}=I,\qquad \mbox{ such that }\quad L^2=I.\label{eq:glmsymm}
\end{equation}
As shown in Section \ref{sec:symm}, identity \eqref{eq:glmsymm} characterizes a symmetric general linear method.  In Section \ref{sec:even}, it is shown that \eqref{eq:glmsymm} implies that \eqref{eq:basicsymm} is formally satisfied by the corresponding underlying one--step method. These results are essentially similar to those of \cite[XIV.4.2]{hlw}, though our assumptions differ in detail.

Three further characterizations of symmetry are obtained in the paper:\\
(i) In Section \ref{sec:symm}, an algebraic condition \eqref{eq:symm} in terms of the method coefficient matrices $(A,\,U,\,B,\,V)$, the matrix $L$  and a stage permutation matrix $P$, which also satisfies $P^2=I$; see also \cite[XIV.4.2]{hlw}. This condition, together with the canonical form identified later in Section \ref{sec:even}, is the most useful in method construction.\\
(ii) In Section \ref{sec:stab}, an $AN$--stability condition: $LM(-PZP)LM(Z)=I$ for all sufficiently small diagonal $Z$, where  $M(Z)$ is the non--autonomous linear stability matrix, cf. \cite{b87}. This condition helps to show linear stability on a subinterval of the imaginary axis.\\
(iii) Also in Section \ref{sec:stab}, a characterization in terms of the matrix transfer function, generalizing the one--leg condition of \cite{es92}, $(\sigma/\rho)(\zeta)=-(\sigma/\rho)(\zeta^{-1})$. This condition has potential application to long--time nonlinear stability theory, and also helps in the construction of methods that are both symmetric and G--symplectic.
 
Parasitism is a potential disadvantage for any non--trivial symmetric general linear methods. However, in Section \ref{sec:stab}, we find necessary and sufficient conditions on the coefficient matrices of the method for the linear stability matrix $M(zI)$ to have sublinear growth in parasitic directions. (In the terminology of \cite[XIV.5.2]{hlw}, this is equivalent to all parasitic roots having zero  growth--parameters.) These coefficient conditions play a critical role in the construction of practical methods in Section \ref{sec:examp}. They are also used to show that there are no explicit symmetric parasitism--free methods.

In Section \ref{sec:even}, it is  shown that a symmetric general linear method is always of even order. Central to this result is a constructive proof of the existence and uniqueness of a symmetry--respecting starting method $\S_h$ satisfying 
\begin{equation}
\S_h=L\S_{-h},\qquad \label{eq:symmstart}
\end{equation} 
with respect to which $\M_h$ is of maximal order. Related ideas are used to show the existence of a formal starting method, underlying one--step method pair $(\S_h,\,\Phi_h)$ such that
\begin{equation}
\M_h\S_h=\S_h\Phi_h.\qquad \label{eq:uosmi}
\end{equation}

Example symmetric methods of order $4$ are constructed in Section \ref{sec:examp}. These methods have diagonally implicit stage matrices, and some are also $G$--symplectic. One method has only one implicit stage, and is therefore theoretically more efficient than a symmetric DIRK of the same order. The simulations of Section \ref{sec:simul} show that symmetric general linear methods approximately conserve the Hamiltonian of several low--dimensional symmetric problems over long time intervals in a similar way to symmetric Runge--Kutta methods. Furthermore, there are $4$th order symmetric general linear methods with fewer implicit stages than is possible in the Runge--Kutta case. This leads to some efficiency savings over long--times.


\section{General linear methods}\label{sec:GLM}
For $X=\R^N$, $f: X \to X,$ and $y_0\in X$, let $y=y_{y_0}$ denote the solution of  the autonomous initial value problem,
\begin{equation}
y'(x)=f(y(x)),\qquad  y(0)=y_0.\label{eq:ODE}
\end{equation}
For $x\in \R$, denote the flow for \eqref{eq:ODE} by  $E_{x}: X\longrightarrow X$, so that 
\begin{equation*}
y(x)=E_x y_0.
\end{equation*} 
For all ODEs, the evolution operator satisfies the group properties,
\begin{equation}
E_0=I;\qquad E_{x_1}E_{x_2}=E_{x_{1}+x_2},\quad x_1,\; x_2\in \R; \qquad E_x^{-1} = E_{-x},\quad x\in \R\label{eq:group}
\end{equation}

We refer to a general linear method $(A,\,U,\,B,\,V)$, where 
\begin{equation}\label{eq:method}
\left[
\begin{array}{cc} A&U\\B&V   \end{array}
\right]
\end{equation}
forms  a partitioned $(s+r)\times(s+r)$ complex--valued matrix or tableau. For practical methods, the coefficients are real, but for some theoretical purposes   the complex case is also treated.

For  time--step $h$ and $n\in \N$,  the new values $y^{[n]}\in X^r$ are  found from  $y^{[n-1]}\in X^r$ via the formulae
\begin{align}
Y &= h(A\otimes I) F + (U\otimes I)  y^{[n-1]}, \label{eq:Y}\\
y^{[n]} &= h(B\otimes I) F + (V\otimes I)  y^{[n-1]},\label{eq:y1}
\end{align}
 defined using temporary $Y,F\in X^s$. The subvectors in $F$ (the stage derivatives) are related to the subvectors in $Y$ (the stages) by $F_i = f(Y_i)$, $i=1,2,\dots,s$. Usually, where no ambiguity is possible, the Kronecker products in \eqref{eq:Y} and \eqref{eq:y1} will be omitted
and we  write  
\begin{align*}
Y &= hAF + Uy^{[n-1]},\\
y^{[n]} &= hB F + Vy^{[n-1]}.
\end{align*}

In this paper, the method is always assumed to satisfy the conditions below.
\begin{defn}
\label{defn:cons}A general linear method $(A,U,B,V)$ is 
\begin{itemize}
\item[\rm(\ref{defn:cons}a)]  {Preconsistent}, if  $(1,\,u,\,w\H)$ is an eigentriple of $V$, such that
\begin{equation*}
Vu=u,\qquad w\H V=w\H,\qquad w\H u=1;
\end{equation*}
\item[\rm(\ref{defn:cons}b)]  {Consistent}, if it is preconsistent,  $Uu=\1$, and there exists non--zero $v\in \C^r$   such that $B\1+ Vv=u+v$;
\item[\rm(\ref{defn:cons}c)]  {Zero-stable}, if  $\sup_{n\in{\mathbb N}_0}\|V^{n}\|<\infty$.
\end{itemize}
\end{defn}

To approximate the solution of  \eqref{eq:ODE} with initial data $y_0\in X$,  we generate
\begin{equation*}
y^{[0]}= S_{h}y_0,
\end{equation*}
using a practical starting method $S_{h}: X\longrightarrow X^r$, where the tableau 
\begin{equation}
\left[
\begin{array}{cc} \widetilde{A}&\1\\\widetilde{B}&u   \end{array}
\right],\label{eq:start}
\end{equation}
has dimensions $(\widetilde{s}+r)\times (\widetilde{s}+1)$. Similarly, a practical finishing method, $F_h: X^r\longrightarrow X$, is required. It is  assumed that $F_h\circ S_{h}=I_X$. 

\section{Symmetric methods}\label{sec:symm}
We define symmetry in the context of the nonlinear map  generated by the method.   Other characterizations of symmetry
are considered, with a view to identifying or constructing symmetric methods.

\subsection{The method as a nonlinear map}

For $f: X \to X$ and  time--step $h$,  the method maps an input vector $y\in X^r$ to an output vector $\M_h y$. Define the nonlinear map $\M_h: X^r\longrightarrow X^r$ by
\begin{align}
Y &= hAF + Uy,\label{eq:stage}\\
\M_h y &= hB F + Vy.\label{eq:Mh}
\end{align}
(It will be assumed that $f$ and $h$ are such that \eqref{eq:Y} has a solution, and that a suitable selection principle chooses a unique $Y$ when multiple solutions exist.)

\noindent
{\bf  Equivalent  maps:} The map $\M_h$ is not changed if a different ordering is chosen for the subvectors of $Y$; that is, $\M_h$ is also generated by the method defined by the tableau
\begin{equation}
\left[
\begin{array}{cc} P^{-1}AP&P^{-1}U\\BP&V   \end{array}
\right],\label{eq:perm}
\end{equation}
where $P$ is a permutation matrix.

If $T\in \C^{r\times r}$ be  non--singular, then  $T^{-1}\M_h T$ is {\bf equivalent} to $\M_h$. The identity
\begin{equation*}
(T^{-1}\M_h T)(T^{-1}y)=T^{-1}\M_h y,
\end{equation*}
shows that $T$ only changes the coordinate basis. A tableau for $T^{-1}\M_h T$ is
\begin{equation}
\left[
\begin{array}{cc} A&UT\\T^{-1}B&T^{-1}VT   \end{array}
\right].\label{eq:transf}
\end{equation}

\subsection{Symmetry of the map}
We say that the  {\bf map is symmetric} if  the process of calculating $y^{[1]}$ from  $y^{[0]}$ can be reversed by using an equivalent map with the sign of $h$ reversed; i.e. 
\begin{equation}
\M_{h}=L\M_{-h}^{-1}L,\label{eq:altsymm} 
\end{equation}
for some nonsingular matrix $L\in \C^{r\times r}$, such that $L^2=I$. Physically, the involution $L$ corresponds to a linear change of coordinates for $y$ to take account of the change in  time direction.  Algebraically, the condition $L^2=I$ is required to ensure that we recover $\M_{h}$ after two iterations of \eqref{eq:altsymm}. This definition is similar to that stated in \cite[XIV]{ hlw}.

\noindent 
{\bf The inverse map:} From \eqref{eq:stage} and \eqref{eq:Mh}, we deduce that the inverse map $\M_{h}^{-1}$ satisfies 
\begin{align*}
Y &= hA F + U \M_h^{-1} y, \\
y &= hB F + V \M_h^{-1} y.
\end{align*}
Solving these equations for $\M_h^{-1}y$ yields
\begin{align}
Y &= -h(UV^{-1}B-A)F + U V^{-1}y,\label{eq:istage}\\
\M^{-1}_h y &=- hV^{-1}B F + V^{-1} y.\label{eq:Minv}
\end{align}

\subsection{Symmetry of the method}
We say that the {\bf method is symmetric}  if
\begin{equation}
\left[
\begin{array}{cc} A+PAP-UV^{-1}B&PU-ULV\\ BP-VLB&L-VLV \end{array}
\right]=0,\qquad L^2=I,\quad P^2=I,\label{eq:symmdef}
\end{equation}
More specifically, we say that method $(A,\,U,\,B,\,V)$ is $(L,\,P)$--symmetric if \eqref{eq:symmdef} holds.
\begin{prop} 
Suppose that $\M_h$ is the map associated with a symmetric method. Then, $\M_h$ is symmetric.
\end{prop}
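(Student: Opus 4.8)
The plan is to identify the tableau that generates the composite map $L\M_{-h}^{-1}L$ and then verify, block by block, that it coincides with a stage-permuted tableau for $\M_h$; this establishes the map-symmetry identity \eqref{eq:altsymm} directly.

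First I would write down the coefficients of $\M_{-h}^{-1}$. Taking the inverse-map formulas \eqref{eq:istage}--\eqref{eq:Minv} and substituting $-h$ for $h$ shows that $\M_{-h}^{-1}$ is generated by the coefficients $(UV^{-1}B-A,\,UV^{-1},\,V^{-1}B,\,V^{-1})$. The involution $L$ acts only on the external input/output vector and not on the internal stages, so pre- and post-composing with $L$ (apply $L$ to the input, run $\M_{-h}^{-1}$, then apply $L$ to the output) yields that $L\M_{-h}^{-1}L$ is generated by
\[
\left[\begin{array}{cc} UV^{-1}B-A & UV^{-1}L\\ LV^{-1}B & LV^{-1}L \end{array}\right].
\]
The only delicate point at this stage is careful bookkeeping of the sign change $h\mapsto-h$ and of the order in which $L$ is applied.

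I would then invoke the equivalent-maps discussion around \eqref{eq:perm}: relabelling the stages by the permutation $P$ does not alter $\M_h$, so $\M_h$ is equally generated by $(PAP,\,PU,\,BP,\,V)$ (using $P^{-1}=P$). It now suffices to match the two tableaux entrywise. The $(1,1)$ comparison $UV^{-1}B-A=PAP$ is precisely the top-left block of \eqref{eq:symmdef}. For the other three I would first isolate the single algebraic consequence $LV^{-1}=VL$, equivalently $V^{-1}L=LV$, obtained from the bottom-right relation $L=VLV$ together with $L^2=I$. With this in hand the off-diagonal blocks reduce to $UV^{-1}L=ULV=PU$ and $LV^{-1}B=VLB=BP$, each using one of the off-diagonal relations of \eqref{eq:symmdef}, while the bottom-right block gives $LV^{-1}L=VL^2=V$.

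Since the generating tableaux agree, the maps agree, so $\M_h=L\M_{-h}^{-1}L$ with $L^2=I$, which is exactly \eqref{eq:altsymm}; hence $\M_h$ is symmetric. I expect the main obstacle to be conceptual rather than computational: recognizing that the stage vector of $L\M_{-h}^{-1}L$ is a $P$-permutation of that of $\M_h$, so that the comparison must be made against $(PAP,\,PU,\,BP,\,V)$ rather than against $(A,U,B,V)$. Once the role of $P$ is pinned down and the identity $LV^{-1}=VL$ is extracted, the four block checks are routine.
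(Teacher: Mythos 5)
Your proof is correct and follows essentially the same route as the paper: both identify the tableau $(UV^{-1}B-A,\,UV^{-1}L,\,LV^{-1}B,\,LV^{-1}L)$ for $L\M_{-h}^{-1}L$ via \eqref{eq:istage}--\eqref{eq:Minv} and the conjugation \eqref{eq:transf}, and then use the block relations of \eqref{eq:symmdef} (together with $LV^{-1}=VL$ from $L=VLV$) to equate it with a $P$-permuted tableau \eqref{eq:perm} for $\M_h$. The only cosmetic difference is that you apply the permutation to the $\M_h$ side, whereas the paper's identity \eqref{eq:symm} applies it to the adjoint side; your write-up simply makes explicit the block checks the paper leaves to the reader.
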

\begin{proof} A rearrangement of definition \eqref{eq:symmdef} yields
\begin{equation}
\left[
\begin{array}{cc} A&U\\B&V   \end{array}
\right]=\left[
\begin{array}{cc} P(UV^{-1}B-A)P&PUV^{-1}L\\LV^{-1}BP&LV^{-1}L   \end{array}
\right].\label{eq:symm}
\end{equation}
Here, the left--hand side of \eqref{eq:symm} is a tableau for $\M_h$. Taking note of \eqref{eq:perm}, \eqref{eq:altsymm}, \eqref{eq:istage} and \eqref{eq:Minv}, the right--hand side of \eqref{eq:symm} is one possible tableau for $L\M_{-h}^{-1}L$.\end{proof}

\noindent
{\bf Remark:} The tableau on the right--hand side of \ref{eq:symm} is also known as an {\bf adjoint} tableau for the method $(A,\,U,\,B,\,V)$. The conditions $L^2=I$ and $P^2=I$   ensure that the original tableau is recovered after $2$ iterations of \eqref{eq:symm}. The coefficient conditions in \eqref{eq:symm}  are similar to those given in \cite[XIV]{hlw}, except that $L$ and $P$ are not involutions there.
 
\subsection{Symmetry of the starting method}
In order to ensure that $\M_h^{n}S_h=L\M_{-h}^{-n}S_{-h}$, it is required that the starting method satisfies
\begin{equation}
S_{-h}=LS_{h}.\label{eq:startminus}
\end{equation}
Considering the   tableau \eqref{eq:start}, this is equivalent to the coefficient conditions
\begin{equation}
\widetilde{A}=-\widetilde{P}\widetilde{A}\widetilde{P}\qquad \widetilde{B}=-L\widetilde{B}\widetilde{P},\label{eq:startsymm}
\end{equation}
for some permutation matrix $\widetilde{P}\in \R^{\widetilde{s}\times\widetilde{s}}$ such that $\widetilde{P}^{2}=I$.

The diagram in Figure \ref{fig:starts} shows the relationship between various quantities and mappings which have arisen in this discussion.  In addition to  $\M_h$, we introduce a further mapping $\Y_h$ defined as $Y=\Y_h y$ in \eqref{eq:Y}.
 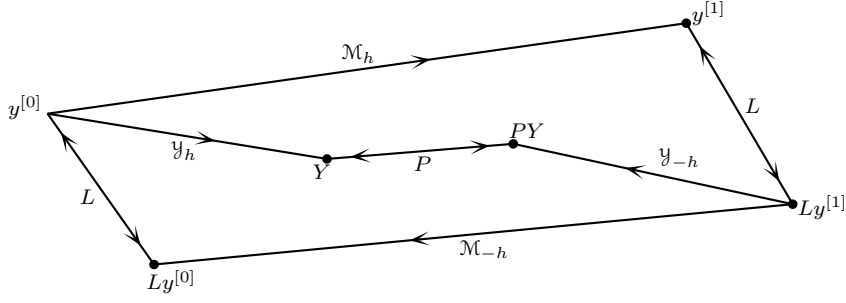
\begin{figure}
\begin{center}
\psset{xunit=3.5cm,yunit=4cm,arrowsize=2pt 2,arrowinset=0.6}
\begin{pspicture}(0,0.4)(3.6,1.6)
\psline{-*}(0.4,1.2)(2.8,1.5)
\psline{-*}(0.8,0.7)(3.2,0.9)
\psline{-*}(0.4,1.2)(1.45,1.05)
\psline{-*}(3.2,0.9)(2.15,1.1)
\psline{-*}(0.4,1.2)(0.8,0.7)
\psline(2.8,1.5)(3.2,0.9)
\psline(1.45,1.05)(2.15,1.1)
\uput{2pt}[160](0.4,1.2){$y^{[0]}$}
\uput{2pt}[-60](0.8,0.7){$Ly^{[0]}$}
\uput{2pt}[30](2.8,1.5){$y^{[1]}$}
\uput{2pt}[-10](3.2,0.9){$Ly^{[1]}$}
\uput{2pt}[-110](1.45,1.05){$Y$}
\uput{2pt}[60](2.15,1.1){$PY$}
\psset{arrowsize=3pt 3,linestyle=none}
\psline{->}(0.4,1.2)(1.84,1.38)
\psline{->}(3.2,0.9)(1.76,0.78)
\psline{->}(0.4,1.2)(1.03,1.11)
\psline{->}(3.2,0.9)(2.57,1.02)
\psline{<->}(0.45,1.1375)(0.75,0.7625)
\psline{<->}(2.85,1.425)(3.15,0.975)
\psline{<->}(1.5375,1.05625)(2.0625,1.09375)
\uput{2pt}[110](1.6,1.35){$\M_h$}
\uput{2pt}[-70](2,0.8){$\M_{-h}$}
\uput{2pt}[-150](0.6,0.95){$L$}
\uput{2pt}[30](3,1.2){$L$}
\uput{2pt}[-80](1.8,1.075){$P$}
\uput{2pt}[-100](0.925,1.125){$\Y_h$}
\uput{2pt}[90](2.775,1.0){$\Y_{-h}$}
\end{pspicture}
\end{center}
\caption{Relationships between various mappings}\label{fig:starts}
\end{figure}

In  Figure  \ref{fig:Estarts},   the role of the underlying one--step pair $(\S_h,\,\Phi_h)$, discussed in the Introduction,  is also included.  
\begin{figure}
\begin{center}
\psset{xunit=2.5cm,yunit=2.5cm,arrowsize=2pt 2,arrowinset=0.6}
\begin{pspicture}(0,0)(3.2,1.5)
\psset{arrowsize=3pt 3}
\psline{-*}(0,0)(2.5,0.25)
\psline(2.5,0.25)(2.8,1.5)
\psline(2.5,0.25)(3.2,0.9)
\pscircle[linestyle=none,fillstyle=solid,fillcolor=white](2.65,0.85){2pt}
\pscircle[linestyle=none,fillstyle=solid,fillcolor=white](2.68,1){2pt}
\psline[linestyle=none]{<->}(0.75,.075)(1.75,0.175)
\psline[linestyle=none]{->}(2.5,0.25)(2.62,0.75)
\psline[linestyle=none]{->}(2.5,0.25)(2.92,0.64)
\uput[45](0.75,.075){$\Phi_{-h}$}
\uput[140](1.75,0.175){$\Phi_{h}$}
\uput{4pt}[210](2.62,0.75){$\S_h$}
\uput{4pt}[-60](2.92,0.64){$\S_{-h}$}
\uput{5pt}[-30](2.5,0.25){$y_1$}
\psline{*-*}(0,0)(0.4,1.2)\psline{-*}(0.4,1.2)(2.8,1.5)
\psline{-*}(0,0)(0.8,0.7)\psline{-*}(0.8,0.7)(3.2,0.9)
\psline{-*}(0.4,1.2)(1.45,1.05)
\psline{-*}(3.2,0.9)(2.15,1.1)
\psline{-*}(0.4,1.2)(0.8,0.7)
\psline(2.8,1.5)(3.2,0.9)
\psline(1.45,1.05)(2.15,1.1)
\uput{4pt}[180](0,0){$y_0$}
\uput{2pt}[160](0.4,1.2){$y^{[0]}$}
\uput{2pt}[-60](0.8,0.7){$Ly^{[0]}$}
\uput{2pt}[30](2.8,1.5){$y^{[1]}$}
\uput{2pt}[-10](3.2,0.9){$Ly^{[1]}$}
\uput{2pt}[-110](1.45,1.05){$Y$}
\uput{3pt}[95](2.15,1.1){$PY$}
\uput{2pt}[160](0.2,0.6){$\S_h$}
\uput{2pt}[-40](0.4,0.35){$\S_{-h}$}
\psset{linestyle=none}
\psline{->}(0,0)(0.24,0.72)
\psline{->}(0.4,1.2)(1.84,1.38)
\psline{->}(0,0)(0.48,0.42)
\psline{->}(3.2,0.9)(1.76,0.78) 
\psline{->}(0.4,1.2)(1.03,1.11)
\psline{->}(3.2,0.9)(2.57,1.02) 
\psline{<->}(0.45,1.1375)(0.75,0.7625)
\psline{<->}(2.85,1.425)(3.15,0.975)
\psline{<->}(1.5375,1.05625)(2.0625,1.09375)
\uput{2pt}[110](1.6,1.35){$\M_h$}
\uput{2pt}[-70](2,0.8){$\M_{-h}$}
\uput{2pt}[-150](0.6,0.95){$L$}
\uput{2pt}[30](3,1.2){$L$}
\uput{2pt}[-80](1.8,1.075){$P$}

\uput{2pt}[-100](0.925,1.125){$\Y_h$}
\uput{2pt}[90](2.5,1.02){$\Y_{-h}$}
\end{pspicture}

\end{center}
\caption{The role of the underlying one-step method}\label{fig:Estarts}
\end{figure}
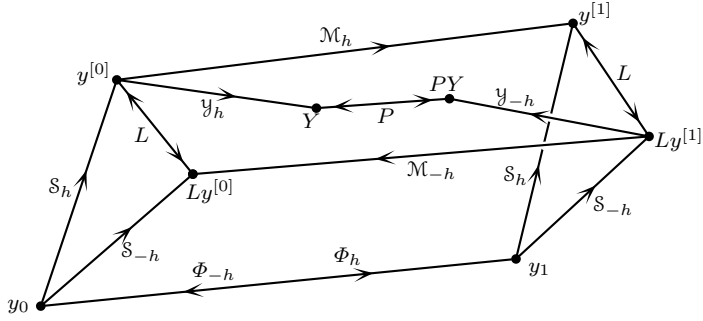


\subsection{Canonical form based on $V$-diagonalization}\label{subs:V}
Given a stable consistent general linear method $(A,U,B,V)$, which is $(L,P)$--symmetric, we  explore a canonical  form of the method based on a diagonal
form of $V$.
The approach is to successively transform $(A,U,B,V)$ to an equivalent method $(A,UT,T^{-1}B,T^{-1}VT)$ and then to
regard this as the base method.  This  leads to a specific form for the coefficient matrices of the method which can
then be back-transformed to a convenient format for practical considerations, such as a requirement that
$U,B,V$ should be real matrices.  As transformations to a canonical form take place, $L$ is also transformed.

Methods in canonical form are convenient to analyze in terms of order of accuracy and the possible presence of parasitic growth factors.

Since $V$ is similar to $V^{-1}$  and each is power-bounded, $T$ exists such that $T^{-1}VT$ is diagonal
with diagonal elements made up from points on the unit circle.  We will see how to carry out this
diagonalization process in such a way that, when the corresponding transformation has also been applied to $U$ and $B$, these matrices have a specific structure.
Because of the original real form of $V$, the diagonal elements are real or come in conjugate pairs.  Hence, in the canonical form,
\begin{equation*}
V = V_0 \oplus V_{-1} \oplus V_1 \oplus V_2 \oplus \cdots \oplus V_n,
\end{equation*}
where
\begin{align*}
V_0 &= \diag(1,1,\dots,1),\\
V_{-1} &= \diag(-1,-1,\dots, -1),\\
V_i &= \diag(\zeta_i, \zeta_i, \dots, \zeta_i, \overline\zeta_i, \overline\zeta_i, \dots, \overline\zeta_i), \quad i=1,2,\dots,n.
\end{align*}
The number of diagonal elements in these blocks  are respectively $m_0$, $m_{-1}$ and $2m_i$.  Because of consistency of the method
$m_0\ge1$, but it is possible that $m_{-1}=0$, indicating that this block is missing.  It is assumed that $m_i\ge1$, although it is possible that $n=0$ indicating that the final $n$ blocks in $V$ do not exist.

To carry out the diagonalization process, define transforming matrices $T$ and $T^{-1}$ of the forms
\begin{equation*}
T=\left[\begin{array}{ccccccc} T_0 & T_{-1} & T_1 &\overline T_1 & \cdots  & T_n &\overline T_n\end{array}\right],\qquad
T^{-1}=\left[\begin{array}{c} S_0 \\ S_{-1} \\ S_1 \\ \overline S_1 \\ \vdots  \\ S_n \\ \overline S_n\end{array}\right],
\end{equation*}
 where, the various submatrices are blocks of eigenvectors; that is
 \begin{align*}
 VT_0 = T_0,\quad VT_{-1} = - T_{-1}, \quad VT_i = \zeta_i T_i,
 \quad V\overline T_i = \overline\zeta_i \overline T_i, \quad i=1,2,\dots, n,\\
 S_0V = S_0,\quad S_{-1}V = - S_{-1}, \quad S_iV_i = \zeta_i S_i,
 \quad \overline S_i V= \overline\zeta_i \overline S_i, \quad i=1,2,\dots, n,
 \end{align*}
 with $T_0, T_{-1}, S_0, S_{-1}$ real.
 Similarly, transformed $B$ and $U$ matrices have the form
 \begin{equation}   \label{eq:BU}
B=\left[\begin{array}{c} B_0 \\ B_{-1} \\ B_1 \\ \overline B_1 \\ \vdots  \\ B_n \\ \overline B_n\end{array}\right],\qquad
U=\left[\begin{array}{ccccccc} U_0 & U_{-1} & U_1 &\overline U_1 & \cdots  & U_n &\overline U_n\end{array}\right].
\end{equation}

In the canonical form, $V^{-1} = \overline V$ and we recall that $L^{-1}=L$, so that \eqref{eq:symm} becomes
\begin{equation*}
L \overline V = VL,
\end{equation*}
and it follows that, in a block representation of $L$ compatible with the block structure of $V$, the off-diagonal blocks
are zero.  Hence we can write
\begin{equation*}
L = L_0 \oplus L_{-1} \oplus L_1 \oplus L_2 \oplus \cdots \oplus L_n.
\end{equation*}
We now consider the structure of the diagonal blocks in $L$.  In the case of $L_0$ and $L_{-1}$,
the idempotent property implies that these matrices are similar to diagonal matrices of the form
$I\oplus (-I)$ where the dimensions of the $+I$ blocks and the $-I$ blocks  are not necessarily the same.  Hence, 
by imposing additional transformations on the method if necessary, we can assume this
diagonal form for $V_0$ and $V_{-1}$.  For the blocks $V_i$, $i=1,2,\dots, n$, the equation
$L_i \overline V_i = V_i L_i$ implies that there exists a non-singular $m_i \times m_i$ matrix
$K_i$ such that
\begin{equation}\label{eq:LiK}
L_i =
\left[
\begin{array}{cc}
0 & K_i\\ K_i^{-1} & 0
\end{array}
\right].
\end{equation}
The choice of the non-singular matrix $K_i$ is arbitrary.  To see why this is the case,
apply the transformation 
\begin{equation}\label{eq:Ktran}
V_i \longmapsto
\left[\begin{array}{cc} I & 0\\ 0 & K_i\end{array}\right]
V_i
\left[\begin{array}{cc} I & 0\\ 0 & K_i^{-1}\end{array}\right],
\end{equation}
which leaves $V_i$ unchanged.  The transformation \eqref{eq:Ktran}
applied to $L_i$ gives
\begin{align*}
L_i \longmapsto & \left[\begin{array}{cc} I & 0\\ 0 & K_i\end{array}\right]
 \left[
\begin{array}{cc}
0 & K_i\\ K_i^{-1} & 0
\end{array}
\right]
\left[\begin{array}{cc} I & 0\\ 0 & K_i^{-1}\end{array}\right]=
  \left[\begin{array}{cc} 0 &I \\  I & 0\end{array}\right],
\end{align*}
so that $K_i$ has been replaced by $I$. We will take the canonical form of $L_i$ to be \eqref{eq:LiK}
with $K_i=I$.

Using the new basis, with $K_i=I$ gives
\begin{equation*}
U\longmapsto UT\qquad\mbox{ and }\qquad   B\longmapsto T^{-1}B,
\end{equation*}
where, as indicated above,  we now use $U$ and $B$ for the transformed matrices. A rearrangement of the symmetry conditions \eqref{eq:symm} now yields
\begin{align}
 B=VLBP,\quad U=PULV.\label{eq:symm4}
\end{align}
Using the canonical forms of $V$ and $L$, and taking $j\geq 1$,  \eqref{eq:symm4} implies that
\begin{equation*}
\left[\begin{array}{c}B_{j}\\
\overline{B}_j\end{array}\right]=\left[\begin{array}{cc} 0 &\zeta_j I \\  \overline{\zeta}_jI & 0\end{array}\right]\left[\begin{array}{c}B_{j}P\\
\overline{B}_jP\end{array}\right], \quad [U_j,\,\overline{U_j}]= [PU_{j},\,P\overline{U}_j]\left[\begin{array}{cc} 0 &\overline{\zeta}_jI \\  \zeta_j I & 0\end{array}\right],
\end{equation*}
for the submatrices $B_j\in \C^{m_i\times s}$,\,$U_j\in \C^{s\times m_j}$.   This simplifies to
\begin{equation}
B_{j}=\zeta_j \overline{B}_{j}P,\qquad U_{j}=\zeta_j P\overline{U}_{j}.\label{eq:b1u1}
\end{equation}
If $P$ represents  the  permutation $\pi$, then the components of $B_j$ and $U_j$ satisfy
\begin{equation}
B_{ji}=\zeta_{j} \overline{B}_{j\pi(i)},\qquad U_{ij}=\zeta_{j} \overline{U}_{\pi(i)j},\qquad   \quad 1\leq i\leq s.
\end{equation}

\subsection{Formulation in real form}
Having constructed a method in canonical form, it is desirable to transform it back to
a formulation in which $B$,. $U$ and $V$ have only real elements.  Consider
complex blocks $B_i, \overline B_i$ and $U_i, \overline U_i$ in \eqref{eq:BU},
corresponding to $V_i =\diag(\zeta I,\overline \zeta I)$.  We will show how it is
possible to construct $T_i$ so that 
\begin{equation*}
T_i^{-1} \left[\begin{array}{c} B_i\\ \overline B_i \end{array}\right], \qquad
\left[\begin{array}{cc} U_i & \overline U_i \end{array}\right] T, \qquad
T_i^{-1} V T
\end{equation*}
are each real.
The suggested choice of $T_i$ and $T_i^{-1}$ are
\begin{equation*}
T_i = \left[\begin{array}{rr} I &  \ii I\\ I &- \ii I\end{array}\right],\qquad
T_i^{-1} = \tfrac12\left[\begin{array}{rr} I & I\\
-\ii I  & \ii I\end{array}\right],
\end{equation*}
leading to transformed blocks
\begin{align*}
T_i^{-1} \left[\begin{array}{c} B_i\\ \overline B_i \end{array}\right]&= 
\left[\begin{array}{c} \re B_i\\ \im B_i \end{array}\right],\\
\left[\begin{array}{cc} U_i & \overline U_i \end{array}\right] T &=
\left[\begin{array}{rr} 2\re U_i & -2\im U_i \end{array}\right],\\
T_i^{-1} \diag(\zeta_iI, \overline \zeta_iI) T_i &=
\left[\begin{array}{rr} \re\zeta_i I& -\im\zeta_i I\\ -\im\zeta_i I & \re\zeta_i I\end{array}\right].
\end{align*}


\section{Stability}\label{sec:stab}
 
\subsection{Linear stability}

\vspace*{.05in} 

\begin{defn}\label{def:linstab}
For a method $(A,\,U,\,B,\,V)$ and $Z=\diag(z_1,\,\ldots,\,z_s)\in \C^{s\times s}$ such that $I-AZ$ is non--singular, the linear stability function is given by
\begin{equation}
M(Z):=V+BZ(I-AZ)^{-1}U.\qquad\label{eq:linstab}
\end{equation}
\end{defn}

\begin{theorem}\label{theo:MZ}
Method $(A,\,U,\,B,\,V)$ is symmetric, if and  only if there exists a permutation matrix $P$ with  $P^2=I$ such that for all diagonal $Z\in \C^{s\times s}$ with $\|A\|\|Z\|<1$,
\begin{equation}
LM(-PZP)LM(Z)=I.\label{eq:lstab}
\end{equation}
\end{theorem}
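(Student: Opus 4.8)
The plan is to read the linear stability function $M(Z)$ of \eqref{eq:linstab} as the one-step amplification matrix of the method on the non-autonomous linear test equation, so that one application of $\M_h$ sends $y\mapsto M(Z)y$, with $Z=\diag(z_1,\dots,z_s)$ recording the linearized coefficient seen at each stage. Under this dictionary the inverse map $\M_h^{-1}$ has amplification matrix $M(Z)^{-1}$ (its stability function is read off the inverse tableau \eqref{eq:istage}–\eqref{eq:Minv}, and it equals $M(Z)^{-1}$ because $\M_h^{-1}\M_h=I$ forces the product of amplification matrices at the same $Z$ to be the identity); conjugation by $L$, which acts only on the output coordinate, turns an amplification matrix $X$ into $LXL$; and reversing $h$ together with the accompanying reversal of the stage ordering turns $Z$ into $-PZP$. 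Granting these identifications, the map identity \eqref{eq:altsymm} becomes $M(-PZP)=LM(Z)^{-1}L$, which on multiplying out and using $L^2=I$ is exactly \eqref{eq:lstab}. So the real content is to verify this translation at the level of the coefficient matrices, in both directions.

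For the forward implication I would argue purely algebraically from the symmetry tableau \eqref{eq:symm}, which I first rewrite as the four relations $A-UV^{-1}B=-PAP$, $UV^{-1}L=PU$, $LV^{-1}B=BP$ and $LVL=V^{-1}$. Reading $M(Z)^{-1}$ off the inverse tableau gives $M(Z)^{-1}=V^{-1}-V^{-1}BZ\bigl(I-(A-UV^{-1}B)Z\bigr)^{-1}UV^{-1}$; substituting the four relations turns this into $LM(Z)^{-1}L=V-BP\,Z\,(I+PAPZ)^{-1}PU$. The only nonroutine step is the permutation bookkeeping: writing $W=PZP$ and using $P^2=I$ one checks $I+PAPZ=P(I+AW)P$ and $BPZ=BWP$, whence the middle collapses to $BW(I+AW)^{-1}U$ and the right-hand side is precisely $M(-PZP)$. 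This short computation gives $M(-PZP)=LM(Z)^{-1}L$, i.e. \eqref{eq:lstab}.

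For the converse I would expand $LM(-PZP)LM(Z)$ as a power series in the entries of $Z$, using $M(Z)=V+\sum_{k\ge0}BZ(AZ)^kU$, and match it with $I$ order by order. The constant term gives $LVLV=I$, i.e. the $(2,2)$ relation $V=LV^{-1}L$; the terms linear in $Z$ should yield the $U$- and $B$-relations, and the quadratic terms the $A$-relation, after which the preceding Proposition of Section~\ref{sec:symm} supplies the conclusion that the method is symmetric. I expect the converse to be the main obstacle: a vanishing expression of the form $\sum_i z_i\,(\text{rank-one})_i=0$ only forces the per-stage outer products to agree, not the individual matrix factors, so there is a diagonal gauge ambiguity at each order. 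Resolving it into the clean matrix identities of \eqref{eq:symm} for one fixed $P$ requires feeding the already-established $V$-relation back into the linear terms and combining successive orders (or, alternatively, invoking uniqueness of the realization $M(Z)=V+BZ(I-AZ)^{-1}U$ up to the equivalences \eqref{eq:perm} and \eqref{eq:transf}); the invertibility of $M(Z)$ near $Z=0$, guaranteed by $M(0)=V$ being power-bounded, is what makes the rearrangement to $M(-PZP)=LM(Z)^{-1}L$ legitimate throughout.
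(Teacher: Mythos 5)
Your proposal is correct, and its ``only if'' half follows a genuinely different route from the paper's. The paper proves that direction functionally: assuming $A$ is $S$--irreducible, it constructs by interpolation a scalar function $f$ with $f(Y_i)=Z_{ii}Y_i$, so that one step of the nonlinear map $\M_h$ literally acts as multiplication by $M(Z)$; the map-level symmetry \eqref{eq:altsymm} then forces $LM(-PZP)LM(Z)y^{[0]}=y^{[0]}$, and the general case is recovered by density of $S$--irreducible $A$ and continuity in $Z$. You instead work entirely at the level of the coefficient matrices: the Woodbury-type identity $M(Z)^{-1}=V^{-1}-V^{-1}BZ\bigl(I-(A-UV^{-1}B)Z\bigr)^{-1}UV^{-1}$, which is exactly the stability function of the inverse tableau \eqref{eq:istage}--\eqref{eq:Minv}, combined with the four quadrant relations of \eqref{eq:symm} and the conjugation identities $I+PAPZ=P(I+APZP)P$ and $BPZ=B(PZP)P$. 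This buys a proof valid on the whole region $\|A\|\|Z\|<1$ with no irreducibility hypothesis, interpolation construction, or limiting argument --- cleaner and more self-contained --- at the cost of the conceptual point the paper's argument makes explicit, namely that \eqref{eq:lstab} is nothing but the symmetry of the map \eqref{eq:altsymm} restricted to linear nonautonomous problems. (The one step you assert rather than prove, that the inverse tableau's stability function equals $M(Z)^{-1}$, is a two-line Woodbury computation, so this is not a gap.) For the ``if'' half, you and the paper do the same thing: expand \eqref{eq:lstab} in powers of $Z$ and match the $O(1)$, $O(Z)$, $O(Z^2)$ terms against the four quadrants of \eqref{eq:symm}; the paper, too, leaves this as a sketch. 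Your remark about the diagonal gauge ambiguity is well taken and is in fact the real content there: the linear terms give only the rank-one equalities $(VLBe_i)(e_i\tr U)=(BPe_i)(e_i\tr PULV)$, which determine $BP$ and $PU$ up to reciprocal per-stage scalars, and one must feed this into the quadratic terms (or invoke a uniqueness-of-realization argument in the spirit of Lemma \ref{lem:Nuniq}) together with an irreducibility-type hypothesis to remove the twist. The paper's sketch passes over this in silence, so on this point your account is, if anything, the more careful of the two.
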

\begin{proof} ({\bf only if}) Assume first that $A$ is $S$--irreducible, see \cite{hs81}.   Choose  $y^{[0]}\in \C^s$ and diagonal $Z\in \C^{s\times s}$ such that $\|A\|\|Z\|< 1$. Let $Y$ be  the unique solution of 
\begin{equation*}
(I-AZ)Y=Uy^{[0]}.
\end{equation*}
For almost all $Z$, $S$ irreducibility implies $Y_i\neq Y_j$ implies $i\neq j$, $1\leq i,\,j\leq s$. Using interpolation, we may construct continuous $f:\C\longrightarrow \C$ such that 
\begin{equation*}
f(Y_i)=Z_{ii}Y_i,\qquad 1\leq i\leq s.
\end{equation*}
Let $h=1$ and set $y^{[1]}:=BF+Vy^{[0]}$. Then,
\begin{equation*}
y^{[1]}=\M_h y^{[0]}=M(Z)y^{[0]}. 
\end{equation*}
Also, by  \eqref{eq:symm}, $y^{[0]}=\M_h^{-1}y^{[1]}=LM(-PZP)Ly^{[1]}.$ Thus,
\begin{equation*}
 y^{[0]}=LM(-PZP)Ly^{[1]}=LM(-PZP)LM(Z)y^{[0]}.  
\end{equation*}
Hence, \eqref{eq:lstab} holds for almost all diagonal $Z$, $\|Z\|\|A\|<1$, when $A$ is $S$--irreducible. 
The general case follows from the continuity of $LM(-PZP)LM(Z)$ and the density of  $Z$ and $S$--irreducible $A$.  

\noindent
{\bf (if)} {\em Sketch} Let  $Z=\epsilon$ diag$(x)$ for non-zero $x\in \C^s$, and expand \eqref{eq:lstab} in powers of $\epsilon$. The identities in the $4$ quadrants of \eqref{eq:symm} follow from the terms of $O(1)$, $O(\epsilon)$ and $O(\epsilon^2)$.
\end{proof}

\noindent
{\bf Remark:} This result, which generalizes a linear multistep theorem of \cite{es92}, is in the spirit of the $AN$--stability characterization of algebraic stability \cite{b87}. In the Runge--Kutta case, $L=1$, and \eqref{eq:lstab} generalizes the known necessary condition for symmetry: $M(-zI)M(zI)=I$ \cite[V.6]{hlw}. 
 
\begin{lemma}\label{lem:eiginv}
Suppose that the method $(A,\,U,\,B,\,V)$  is $(L,\,P)$--symmetric and has real coefficients. Suppose also that $R\in  \R^{s\times s}$ is a diagonal matrix such that $ R=P RP$. Then,
\begin{equation*}
\zeta\in \sigma(M(\ii R)) \quad \Longrightarrow \quad 
\overline{\zeta}^{-1}\in \sigma(M(\ii R)).
\end{equation*}
\end{lemma}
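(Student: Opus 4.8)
The plan is to compress the three available symmetries into one identity relating $M(\ii R)$ to its complex conjugate and its inverse, and then to read off the eigenvalue statement purely at the level of spectra. Throughout I would write $W := M(\ii R)$.

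First I would invoke the stability characterization of symmetry. By Theorem \ref{theo:MZ}, since the method is $(L,P)$--symmetric, $LM(-PZP)LM(Z)=I$, and using $L^2=I$ this rearranges to $M(-PZP)=LM(Z)^{-1}L$. Although Theorem \ref{theo:MZ} is phrased for $\|A\|\|Z\|<1$, the relation is a rational identity in $Z$, so it extends by analytic continuation to every $Z$ with $I-AZ$ nonsingular, i.e. to the whole domain of $M$ in Definition \ref{def:linstab}.

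Next I would specialize to $Z=\ii R$ and exploit the two structural hypotheses. Because $R=PRP$ we have $-PZP=-\ii\,PRP=-\ii R$, and because $R$ is real we have $\overline Z=\overline{\ii R}=-\ii R$; hence $-PZP=\overline Z$. Because the coefficients $A,U,B,V$ are real, the formula $M(Z)=V+BZ(I-AZ)^{-1}U$ gives $M(\overline Z)=\overline{M(Z)}$. Combining these with the previous step yields the single identity
\[ \overline W = M(\overline Z) = M(-PZP) = LW^{-1}L. \]
Here invertibility of $W$ is automatic, since the symmetry relation exhibits $LM(-PZP)L$ as its inverse; and one checks that $I+A\ii R=\overline{I-A\ii R}$, so $M(-\ii R)$ is defined on exactly the same set of $R$ as $M(\ii R)$.

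Finally the eigenvalue claim drops out. If $\zeta\in\sigma(W)$ then $\overline\zeta\in\sigma(\overline W)$; the identity $\overline W=LW^{-1}L$ with $L=L^{-1}$ shows $\overline W$ is similar to $W^{-1}$, so $\sigma(\overline W)=\sigma(W^{-1})=\{\mu^{-1}:\mu\in\sigma(W)\}$. Thus $\overline\zeta=\mu^{-1}$ for some $\mu\in\sigma(W)$, giving $\overline\zeta^{-1}=\mu\in\sigma(M(\ii R))$, as required. The computations are routine; the only real content is getting the three symmetries to align --- the $(L,P)$--symmetry relation, conjugation coming from real coefficients, and the hypothesis $R=PRP$ that is precisely what forces $-PZP$ and $\overline Z$ to coincide. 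The point needing care is the domain and invertibility bookkeeping, namely confirming that $W$ is invertible and that $M(-PZP)$ is defined wherever $M(\ii R)$ is, both of which follow from conjugating $I-A\ii R$.
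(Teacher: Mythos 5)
Your proof is correct and takes essentially the same route as the paper's: both invoke Theorem \ref{theo:MZ}, use $R=PRP$ together with the realness of the coefficients to identify $M(-\ii PRP)=M(-\ii R)$ with $\overline{M(\ii R)}$, and conclude from the similarity $LM(\ii R)^{-1}L$ that conjugate-inverses of eigenvalues are again eigenvalues. The only difference is cosmetic (you fuse conjugation and similarity into the single identity $\overline{W}=LW^{-1}L$ before passing to spectra, while the paper does them in two steps), plus your analytic-continuation remark, which explicitly justifies applying the relation of Theorem \ref{theo:MZ} beyond $\|A\|\|Z\|<1$ --- a domain point the paper's proof passes over silently.
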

\begin{proof} 
The symmetry of $M$ and the assumption on $ R$ imply that 
\begin{equation*}
LM(\ii R)LM(-\ii P RP)=I.
\end{equation*}
Hence, $M(\ii R)$ is of full rank and possesses an inverse. In particular, $\zeta\neq 0$ and $\zeta^{-1}\in \sigma(M(\ii R)^{-1})$. Since
\begin{equation*}
LM(\ii R)^{-1}L=M(-\ii P RP)=M(-\ii R),
\end{equation*}
similarity implies that $\zeta^{-1}\in \sigma(M(-\ii R))$. Taking the complex conjugate, it follows that $\overline{\zeta}^{-1}\in \sigma(M(\ii R))$.
\end{proof}

\begin{theorem}\label{theo:imag}
Assume that the method $(A,\,U,\,B,\,V)$ is  symmetric and has real coefficients. Assume also that the eigenvalues of $V$ are distinct.  Then,  there exists $k_0>0$ such that the  eigenvalues of $M(\ii R)$ are distinct and unimodular for all diagonal $ R\in  \R^{s\times s}$ such that $ R=P RP$ and $\| R\|<k_0$. In particular, the linear stability domain $S$ contains an imaginary interval $(-\ii k_0,\,\ii k_0)$.
\end{theorem}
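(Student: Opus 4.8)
The plan is to view $M(\ii R)$ as a small perturbation of $V=M(0)$ and to combine the continuity of eigenvalues with the inversion symmetry supplied by Lemma \ref{lem:eiginv}. First I would record that, since the method is zero-stable and symmetric, $V$ is power-bounded and similar to $V^{-1}$; hence, as established in the canonical-form discussion, every eigenvalue of $V$ lies on the unit circle. By hypothesis the $r$ eigenvalues $\zeta_1,\dots,\zeta_r$ are distinct, so I fix $\delta\in(0,\tfrac12)$ for which the closed disks $\overline D_j=\{\zeta:|\zeta-\zeta_j|\le 2\delta\}$ are pairwise disjoint. Because eigenvalues depend continuously on matrix entries and $R\mapsto M(\ii R)$ is continuous with $M(0)=V$, there is $k_0>0$ such that for every admissible diagonal $R$ (that is, $R=PRP$) with $\|R\|<k_0$ the matrix $M(\ii R)$ has exactly one eigenvalue in each open $\delta$-disk $\{|\zeta-\zeta_j|<\delta\}$ and no others. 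In particular these $r$ eigenvalues are distinct, which settles half of the assertion.

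For unimodularity I would invoke Lemma \ref{lem:eiginv}: for admissible $R$ the spectrum of $M(\ii R)$ is invariant under $\zeta\mapsto\overline\zeta^{-1}$. Let $\zeta$ be the unique eigenvalue in the $\delta$-disk about $\zeta_j$. The inversion $\zeta\mapsto\overline\zeta^{-1}$ fixes the unit circle pointwise, so it nearly preserves each disk; quantitatively, since $|\zeta_j|=1$ one has the elementary identity
\begin{equation*}
|\overline\zeta^{-1}-\zeta_j|=\frac{|\zeta-\zeta_j|}{|\zeta|},
\end{equation*}
and for $\delta<\tfrac12$ (so that $|\zeta|>1-\delta$) this gives $|\overline\zeta^{-1}-\zeta_j|<\delta/(1-\delta)<2\delta$. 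Thus $\overline\zeta^{-1}\in\overline D_j$. But $\overline\zeta^{-1}$ is itself an eigenvalue, hence lies in some $\delta$-disk and therefore in some $\overline D_i$; since the enlarged disks are disjoint, necessarily $i=j$, so $\overline\zeta^{-1}$ coincides with the unique eigenvalue in $\overline D_j$, namely $\zeta$ itself. Therefore $\overline\zeta^{-1}=\zeta$, i.e.\ $|\zeta|=1$. Applying this to each $j$ shows all eigenvalues of $M(\ii R)$ are unimodular.

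I expect the delicate step to be the bookkeeping of radii above: one must choose $\delta$ small enough, and arrange the continuity statement to hold uniformly over admissible $R$, so that the inversion cannot carry an eigenvalue out of its own enlarged disk into a neighbouring one. This disjointness is precisely what pins $\overline\zeta^{-1}$ down to $\zeta$ and forces unimodularity; everything else is routine. Finally, for the stability claim I would specialize to the scalar family $R=\lambda I$, which is admissible because $P(\lambda I)P=\lambda P^2=\lambda I$. For $|\lambda|<k_0$ the matrix $M(\ii\lambda I)$ then has distinct unimodular eigenvalues, hence is diagonalizable with spectrum on the unit circle and is power-bounded; consequently $\ii\lambda\in S$. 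Letting $\lambda$ range over $(-k_0,k_0)$ shows that $S$ contains the imaginary interval $(-\ii k_0,\ii k_0)$, completing the plan.
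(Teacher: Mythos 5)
Your proposal is correct and follows essentially the same route as the paper's proof: perturb from $V=M(0)$ using continuity of eigenvalues, invoke Lemma \ref{lem:eiginv} to get invariance of the spectrum under $\zeta\mapsto\overline{\zeta}^{-1}$, and use a separation (disjoint-disk) argument to force $\overline{\zeta}^{-1}=\zeta$, then specialize to $R=xI$ for the stability claim. Your explicit identity $|\overline{\zeta}^{-1}-\zeta_j|=|\zeta-\zeta_j|/|\zeta|$ is a nice quantitative substitute for the paper's continuity condition (ii), but it does not change the substance of the argument.
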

\begin{proof} 
The eigenvalues of $V$ are unimodular, so $\zeta\in \sigma(V)$ implies $\zeta-\overline{\zeta}^{-1}=0$. Let $\delta>0$ be  the closest distance between any two eigenvalues of $V$. For small diagonal $ R\in \R^{s}$, the eigenvalues of $M(\ii R)$ are  continuous functions of $ R$. Thus,  there exists $k_0>0$ such that $\| R\|< k_0$ implies\\
(i) no eigenvalue of $M(\ii R)$ is closer than $\delta/2$ to any other eigenvalue;\\
(ii) $\zeta\in \sigma(M(\ii R))$ implies $|\zeta-\overline{\zeta}^{-1}|<\delta/4$.

Now, suppose that diagonal $ R\in  \R^{s\times s}$ satisfies $\| R\|<k_0$ and that $\zeta\in \sigma(M(\ii R))$. Then, Lemma \ref{lem:eiginv} implies that $\overline{\zeta}^{-1}\in \sigma(M(\ii R))$. Furthermore, conditions (i) and (ii) imply that $\overline{\zeta}^{-1}=\zeta$; i.e. $|\zeta|=1$.

If $R=xI$, $|x|\leq \|xI\|<k_0$, then the foregoing results imply that all eigenvalues of $M(\ii xI)$ are unimodular. Thus, $M(\ii xI)$ is power--bounded and $\ii x\in S$.
\end{proof}

\noindent
{\bf Remark:} The continuity argument used in the proof of Theorem \ref{theo:imag} may be used to increase $k_0$ until $M(\ii k_0)$  is ill--defined or has a multiple eigenvalue. 




\subsection{Parasitism}

For a zero--stable symmetric method, $V$ is power--bounded and similar to $V^{-1}$. Hence, all of the eigenvalues of $V$ are unimodular and, at worst, semi--simple. Typically, however, symmetric methods will be applied to problems without overall growth or decay. Hence, care   is needed to limit the growth of  components of the numerical solution associated with the non--principal  eigenvalues of $V$. Below, it is assumed that the method is written in the canonical coordinates of Subsection \ref{subs:V}.

\begin{defn}
 A preconsistent symmetric method is said to be {\bf parasitism--free} if there exist $C,\,\nu>0$ such that, given $\epsilon>0$,   
\begin{equation}
\|((I- e_1 e_1\tr)M(zI)(I- e_1 e_1\tr))^n\|\leq C(1+\nu \epsilon^{2})^{n},\qquad \quad n\in \N,\label{eq:parfree}
\end{equation}
for all $z\in \C$ such that $|z|<\epsilon$.
\end{defn}

\begin{prop}\label{prop:parfree}
A preconsistent symmetric method is parasitism-free if and only if
\begin{equation}
 w\H_{\zeta}BUu_{\zeta}=0,\label{eq:BUzero}
\end{equation}
whenever $\zeta$ is a non--principal eigenvalue of $V$, and $u_\zeta$ and $w_\zeta\H$ are respectively right and left eigenvectors corresponding to $\zeta$.
\end{prop}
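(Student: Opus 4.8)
The plan is to read the parasitic growth rates off the eigenvalues of the stability matrix and to recognise \eqref{eq:BUzero} as the vanishing of their first-order growth parameters. Working in the canonical coordinates of Subsection~\ref{subs:V}, so that $V=M(0)$ is diagonal with principal eigenvector $e_1$, and writing $\Pi:=I-e_1e_1\tr$ for the projection off the principal direction, I would start from the convergent expansion
\begin{equation*}
M(zI)=V+z\,BU+z^2 BAU+O(z^3),\qquad \|A\|\,|z|<1,
\end{equation*}
which follows from \eqref{eq:linstab} on expanding $(I-zA)^{-1}$. Since $V$ is semisimple with unimodular spectrum, analytic perturbation theory applies: for each non-principal eigenvalue $\zeta$, with matrices of right and left eigenvectors $U_\zeta$, $W_\zeta\H$ (normalised so $W_\zeta\H U_\zeta=I$), the eigenvalues of $M(zI)$ branching from $\zeta$ are $\zeta+z\mu_k+O(z^2)$, where the $\mu_k$ are the eigenvalues of the compressed block $W_\zeta\H BU\,U_\zeta$. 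Condition \eqref{eq:BUzero} is exactly the statement that this block vanishes for every non-principal $\zeta$.

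For necessity I would argue contrapositively. If $W_\zeta\H BU\,U_\zeta\ne0$ for some non-principal $\zeta$ then, since the eigenvectors of $\zeta$ lie in the range of $\Pi$, the compression $\Pi M(zI)\Pi$ (deletion of the principal row and column) carries this block, and one of two things happens. Either the block has a non-zero eigenvalue $g$, so $M(zI)$ has an eigenvalue $\lambda(z)=\zeta+gz+O(z^2)$ with, since $|\zeta|=1$,
\begin{equation*}
|\lambda(z)|=1+\re(\overline\zeta\,g\,z)+O(|z|^2);
\end{equation*}
choosing $\arg z$ so that $\overline\zeta g z=|z|\,|g|$ and $|z|=\epsilon/2$ gives $\|(\Pi M(zI)\Pi)^n\|\ge|\lambda(z)|^n\ge(1+\tfrac14|g|\epsilon)^n$ for small $\epsilon$, which outgrows any $C(1+\nu\epsilon^2)^n$. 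Or the block is a non-zero nilpotent, in which case the semisimple eigenvalue $\zeta$ develops a genuine Jordan block at first order and $\|(\Pi M(zI)\Pi)^n\|\ge c\,n|z|$ over the window $1\le n\lesssim|z|^{-1}$; taking $n\sim\epsilon^{-2}$ again defeats the bound $C(1+\nu\epsilon^2)^n$. Either way the method fails to be parasitism-free, so \eqref{eq:BUzero} is necessary.

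For sufficiency, assume \eqref{eq:BUzero}. Then every eigenvalue of $\Pi M(zI)\Pi$ branching from a non-principal $\zeta$ equals $\zeta+O(z^2)$, so its spectral radius is at most $1+\nu|z|^2$ for a suitable $\nu$. The main obstacle, and the step I expect to dominate the proof, is to upgrade this spectral estimate to the power bound \eqref{eq:parfree}: a non-normal matrix with eigenvalues of modulus $1+O(\epsilon^2)$ can still amplify its powers by a factor polynomial in $n$, and such a factor is not absorbable into $C(1+\nu\epsilon^2)^n$ uniformly as $\epsilon\to0$ (the worst case $n\sim\epsilon^{-2}$ blows up). I would close this gap by showing that the parasitic block stays semisimple with uniformly bounded spectral projections for small $z$, so that $\|(\Pi M(zI)\Pi)^n\|\le\big(\sum_\zeta\|P_\zeta(z)\|\big)(1+\nu|z|^2)^n\le C(1+\nu\epsilon^2)^n$. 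The tools for this are the semisimplicity of $V$ and analyticity in $z$ (which control the projections $P_\zeta(z)$ away from any coalescing eigenvalues), together with the symmetry relation specialised to $Z=zI$: because $P(zI)P=zI$, Theorem~\ref{theo:MZ} gives $M(zI)^{-1}=LM(-zI)L$, pairing each branch $\lambda(z)$ with $1/\lambda(-z)$ and thereby precluding the formation of Jordan structure on the parasitic subspace. Equivalently, one may seek a $z$-dependent Lyapunov inner product, built from this relation, in which $\Pi M(zI)\Pi$ is a near-isometry of distortion $1+O(|z|^2)$; granting such a norm, \eqref{eq:parfree} is immediate.
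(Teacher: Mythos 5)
Your route is the same as the paper's: expand $M(zI)=V+zBU+O(z^2)$ in the canonical coordinates, read \eqref{eq:BUzero} as the vanishing of the compressed first--order block $W_\zeta\H BU\,U_\zeta$ for each non--principal $\zeta$, prove sufficiency by perturbing the eigenvalues, and prove necessity with an adversarial choice of $z$. Your sufficiency half is in fact more careful than the paper's (which cites eigenvalue perturbation theory and stops): the upgrade from a spectral estimate to the power bound \eqref{eq:parfree} via uniformly bounded group spectral projections is exactly the missing step, and it closes cleanly because within each $\zeta$--group the matrix is $\zeta I+O(z^2)$, whose powers are bounded by $(1+K|z|^2)^n$ with no semisimplicity claim needed. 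One side remark is unjustified: the relation $M(zI)^{-1}=LM(-zI)L$ from Theorem~\ref{theo:MZ} does not ``preclude Jordan structure'' (similarity of the inverse to $M(-zI)$ is perfectly compatible with nontrivial Jordan blocks); but this is also unnecessary, since Jordan structure arising at $O(z^2)$ is harmless against a bound of the form $(1+\nu\epsilon^2)^n$.

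The genuine gap is the nilpotent sub--case of your necessity argument. Writing $\Pi:=I-e_1e_1\tr$, you claim $\|(\Pi M(zI)\Pi)^n\|\ge c\,n|z|$ only on the window $1\le n\lesssim|z|^{-1}$, and then take $n\sim\epsilon^{-2}$, which lies outside that window; inside the window your lower bound never exceeds the constant $c$, and a bounded transient can never contradict \eqref{eq:parfree}, because its free constant $C$ absorbs any fixed amount of growth. So, as written, this branch proves nothing. Extending the window to $n\lesssim|z|^{-2}$ is not free either: the crude bound on the terms with two or more perturbation factors is of size $\exp(Kn|z|)$, which is useless beyond $n\sim|z|^{-1}$, so one needs either genuine control of the error accumulation or a resolvent (Kreiss--type) lower bound near $\zeta$. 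Note how the paper avoids the case split altogether: it takes a matched pair $w\H_\zeta u_\zeta=1$ with $w\H_\zeta BUu_\zeta\neq0$ (such a pair exists whenever the block is nonzero, even nilpotent, after re--choosing eigenvectors), sets $z=\delta\zeta/w\H_\zeta BUu_\zeta$, and computes the single bilinear form $w\H_\zeta(\Pi M(zI)\Pi)^n u_\zeta=\zeta^n(1+n\epsilon+O(\epsilon^2))$, which grows linearly in $n$ irrespective of the block's spectrum. To be fair, the paper is silent about the uniformity in $n$ of its $O(\epsilon^2)$ term, which is the same difficulty you ran into; your non--nilpotent case, which runs on the spectral radius and therefore yields $(1+\tfrac14|g|\epsilon)^n$ for all $n$, is the most rigorous piece of necessity in either text.
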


\noindent
{\bf Remark:} In the assumed canonical coordinates,  \eqref{eq:BUzero} implies the simple condition,
\begin{equation}
(BU)_{ii}=0,\qquad 2\leq i\leq r,\label{eq:BUzero2}
\end{equation}
If $\zeta$ is a multiple eigenvalue of $V$, \eqref{eq:BUzero} implies that some  off--diagonal elements are also zero.

\begin{proof} If \eqref{eq:BUzero} holds, then there exist $r$ eigentriples $(\zeta,\,u_{\zeta},\,w\H_{\zeta})$ for $V$ such that
\begin{equation*}
Vu_{\zeta}=\zeta u_{\zeta},\quad w\H_{\zeta}V=\zeta w\H_{\zeta},\quad  w\H_{\zeta}u_{\zeta}=1.
\end{equation*}
Set $ T=[u_{\zeta_1},\,\ldots,\,u_{\zeta_r}]$. Then, $ T^{-1}=[w_{\zeta_1},\,\ldots,\,w_{\zeta_r}]$ and
$ T^{-1}V T=\mbox{diag}(\zeta_1,\,\ldots,\,\zeta_{r}).$ Assuming $\zeta_1$ is the principal eigenvalue,
\begin{align}
& T^{-1}(I- e_1 e_1\tr)M(zI)(I- e_1 e_1\tr) T= T^{-1}(I- e_1 e_1\tr)(V+zBU+O(|z|^2))(I- e_1 e_1\tr) T\nonumber\\
&\qquad =\mbox{diag}(0,\,\zeta_2,\,\ldots,\,\zeta_r)+z T^{-1}(I- e_1 e_1\tr)BU(I- e_1 e_1\tr) T+O(|z|^2)\label{eq:pardiag}.
\end{align}
For small $z\in \C$, eigenvalue perturbation theory (Wilkinson 1965) implies that the eigenvalues of 
$(I- e_1 e_1\tr)M(zI)(I- e_1 e_1\tr)$ consist of a term of $O(|z|^2)$, corresponding to the principal eigenvector of $V$, and $\{\zeta_{j}(z)\}_{j=2}^{r}$, where
\begin{equation*}
\zeta_{j}(z)=\zeta_{j}(0)+z w_{\zeta_j}\H BUu_{\zeta_j}+O(|z|^2)=\zeta_{j}(0)+O(|z|^2),\qquad 2\leq j\leq r.
\end{equation*}
Since $|\zeta_j(0)|=1$, the parasitism--free condition \eqref{eq:parfree} is satisfied.

Conversely, assume that \eqref{eq:BUzero} is not satisfied, and let small $\epsilon>0$ be chosen. Set $z=\delta \zeta/w\H_{\zeta}BUu_{\zeta}$, where  $\delta>0$ is such that $|z|=\epsilon$.  Then, similarly to  \eqref{eq:pardiag},
\begin{equation*}
w_{\zeta}\H((I- e_1 e_1\tr)M(zI)(I- e_1 e_1\tr))^n u_{\zeta}=\zeta^n(1+n\epsilon+O(\epsilon^2)),
\end{equation*}
and so \eqref{eq:parfree} cannot be satisfied.
\end{proof}

\begin{corollary}\label{theo:noexplicit}
There are no explicit consistent symmetric parasitism-free  methods.
\end{corollary}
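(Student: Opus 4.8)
The plan is to derive a contradiction by evaluating the scalar $\operatorname{tr}(V^{-1}BU)$ in two incompatible ways, working throughout in the canonical coordinates of Subsection~\ref{subs:V}. By~\eqref{eq:transf} the $T$-transformation to canonical form leaves $A$ untouched, so explicitness — strict lower triangularity of $A$, up to the stage permutation allowed by~\eqref{eq:perm} — is preserved and $\operatorname{tr}(A)=0$ there. Suppose such a method existed. The upper-left block of the symmetry condition~\eqref{eq:symmdef} reads $UV^{-1}B = A + PAP$. Since $P^2=I$, the matrix $PAP$ is similar to $A$, whence $\operatorname{tr}(PAP)=\operatorname{tr}(A)$; combining this with the cyclic invariance of the trace gives
\begin{equation*}
\operatorname{tr}(V^{-1}BU)=\operatorname{tr}(UV^{-1}B)=\operatorname{tr}(A)+\operatorname{tr}(PAP)=2\operatorname{tr}(A)=0 .
\end{equation*}

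I would then recompute the same trace from consistency and the parasitism-free hypothesis. In canonical coordinates $V=\diag(1,\zeta_2,\dots,\zeta_r)$ with the principal eigenvalue $\zeta_1=1$ listed first, so that $\operatorname{tr}(V^{-1}BU)=\sum_{i=1}^{r}\zeta_i^{-1}(BU)_{ii}$. The parasitism-free property, in the concrete diagonal form~\eqref{eq:BUzero2}, annihilates every term with $i\ge 2$, leaving $\operatorname{tr}(V^{-1}BU)=(BU)_{11}$. The principal eigentriple is $(1,e_1,e_1\H)$ with $Ue_1=\1$ (consistency, $Uu=\1$), so $(BU)_{11}=e_1\H B(Ue_1)=w\H B\1$; and consistency in the form $B\1=u+(I-V)v$, together with preconsistency ($w\H(I-V)=0$ and $w\H u=1$), gives $w\H B\1 = 1$. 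Hence $\operatorname{tr}(V^{-1}BU)=1$.

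Comparing the two evaluations yields $0=1$, the desired contradiction, so no explicit consistent symmetric parasitism-free method can exist. The routine content is the pair of trace computations; the step needing the most care is the passage from the abstract parasitism-free condition~\eqref{eq:BUzero} to the diagonal statement~\eqref{eq:BUzero2}, and the verification that the single principal index $i=1$ is the only one exempted, so that no stray diagonal contribution survives in $\sum_i \zeta_i^{-1}(BU)_{ii}$. Once $w\H B\1 = 1$ is secured from (pre)consistency and $\operatorname{tr}(A)=0$ from explicitness, the argument closes immediately.
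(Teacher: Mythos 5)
Your proposal is correct and is essentially the paper's own argument: both rest on the trace identity $\operatorname{tr}(UV^{-1}B)=\operatorname{tr}(A+PAP)$ from the first quadrant of \eqref{eq:symm}, the parasitism--free condition \eqref{eq:BUzero2} killing the non--principal diagonal entries of $BU$, and (pre)consistency forcing $(BU)_{11}=1$. The only differences are cosmetic --- you frame it as a contradiction ($0=1$) where the paper computes $\operatorname{tr}(A+PAP)=1$ directly, and you usefully make explicit two points the paper leaves tacit, namely that \eqref{eq:transf} leaves $A$ (hence explicitness) unchanged and that $\operatorname{tr}(PAP)=\operatorname{tr}(A)$.
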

\begin{proof} 
Consistency \eqref{defn:cons}, with $u=w=e_1$, implies that
\begin{equation*}
(BU)_{11}= e_1\tr BU e_1= e_1\tr B\1= e_1\tr(I-V)v+ e_1\tr  e_1=1.
\end{equation*}
Combined with \eqref{eq:BUzero2}, the canonical form for $V=$ diag$(1,\,\zeta_{2},\,\dots,\,\zeta_r)$, and the first quadrant of \eqref{eq:symm}, we deduce that
\begin{equation}
1=\sum_{i=1}^{r}(BU)_{ii}  \zeta_{i}^{-1}=\mbox{tr}(BUV^{-1})=\mbox{tr}(UV^{-1}B)=\mbox{tr}(A+PAP). \label{eq:trace}
\end{equation}
Hence,  diag$(A)\neq 0$. Thus, the method has at least one implicit stage.
\end{proof}

\noindent
{\bf Remark:} As mentioned in the Introduction, it is known that all symmetric Runge--Kutta methods are implicit, and that all symmetric linear multistep methods suffer from parasitism, (whether or not they are explicit). An example in Section \ref{sec:examp} show that only one implicit stage is necessary for a general linear method to be symmetric and parasitism--free.

\subsection{Transfer function characterization of symmetric methods}

\vspace*{.05in} 

\begin{defn}\label{defn:transf} For a method $(A,\,U,\,B,\,V)$, and $\zeta\in \C$ such that $\zeta I-V$ is nonsingular,  the transfer function is defined by
\begin{equation}
N(\zeta):=A+U(\zeta I-V)^{-1}B.\label{eq:tranf}
\end{equation}
\end{defn}
This function has previously been considered in \cite{b87} and \cite{hil06} in the context of algebraically stable methods. We omit the proof of the following straightforward result.

\begin{lemma}[\cite{b87}]\label{lem:Nuniq}
Given GLMs $(A,\,U,\,B,\,V)$ and $(\widehat{A},\,\widehat{U},\,\widehat{B},\,\widehat{V})$, with diagonalizable $V$ and $\widehat{V}$,
\begin{equation*}
\widehat{N}(\zeta)=N(\zeta),\qquad\zeta\in \C\setminus (\sigma(V)\cup\sigma(\widehat{V})\cup\{0\}),
\end{equation*}
if and only if there exists non--singular $T\in \C^{r\times r}$ such that
\begin{equation}
\left[\begin{array}{cc}\widehat{A}&\widehat{U}\\\widehat{B}&\widehat{V}\end{array}\right]=\left[\begin{array}{cc}A&UT\\T^{-1}B&T^{-1}VT\end{array}\right].\label{eq:Nuniq}
\end{equation}
\end{lemma}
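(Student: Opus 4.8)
The plan is to prove the two directions separately, with the forward direction ("only if") being essentially a substitution check and the reverse direction ("if") being the substantive part that requires a genuine identification argument.

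\medskip
\noindent\textbf{The easy direction.}
First I would establish that \eqref{eq:Nuniq} implies $\widehat N(\zeta)=N(\zeta)$. Assuming $\widehat A=A$, $\widehat U=UT$, $\widehat B=T^{-1}B$, $\widehat V=T^{-1}VT$, I substitute directly into \eqref{eq:tranf}:
\begin{equation*}
\widehat N(\zeta)=A+UT\bigl(\zeta I-T^{-1}VT\bigr)^{-1}T^{-1}B.
\end{equation*}
The key algebraic step is to pull the similarity transformation out of the resolvent, using $\zeta I-T^{-1}VT=T^{-1}(\zeta I-V)T$, so that $\bigl(\zeta I-T^{-1}VT\bigr)^{-1}=T^{-1}(\zeta I-V)^{-1}T$. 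The flanking factors $T$ and $T^{-1}$ then cancel the surrounding $T$, $T^{-1}$, leaving $A+U(\zeta I-V)^{-1}B=N(\zeta)$. This holds wherever both resolvents are defined, i.e.\ for $\zeta\notin\sigma(V)\cup\sigma(\widehat V)\cup\{0\}$ (the exclusion of $0$ is needed because the resolvent formula must also be read off correctly in the limiting behaviour, and because later the value at $\zeta=0$ relates to $V^{-1}$).

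\medskip
\noindent\textbf{The hard direction.}
For the converse I would exploit the fact that $N(\zeta)$ determines the method up to similarity by examining its behaviour as $\zeta\to\infty$ and its residues at the eigenvalues of $V$. Since both $V$ and $\widehat V$ are diagonalizable, write the partial-fraction (spectral) expansion
\begin{equation*}
N(\zeta)=A+\sum_{k}\frac{U\Pi_k B}{\zeta-\lambda_k},
\end{equation*}
where $\lambda_k\in\sigma(V)$ and $\Pi_k$ is the spectral projector onto the $\lambda_k$-eigenspace. Letting $\zeta\to\infty$ forces $\widehat A=A$. Matching the residue at each shared eigenvalue $\lambda_k$ gives $\widehat U\widehat\Pi_k\widehat B=U\Pi_k B$; if the two methods have the same $N$ on a punctured neighbourhood, their pole locations and residues must coincide, so $\sigma(\widehat V)=\sigma(V)$ with matching projectors, and one recovers $\widehat V$ as $\sum_k\lambda_k\widehat\Pi_k$. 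The transforming matrix $T$ is then assembled from a basis of right eigenvectors of $V$ against a dual basis for $\widehat V$; concretely, choosing $T$ so that its columns realize the change of eigenbasis makes $\widehat V=T^{-1}VT$, and the matched residue data forces $\widehat U=UT$ and $\widehat B=T^{-1}B$ after normalizing the eigenvectors consistently across the two methods.

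\medskip
\noindent\textbf{Main obstacle.}
The delicate point, and the reason the paper calls the result ``straightforward'' yet still worth isolating, is the bookkeeping when $V$ has \emph{repeated} eigenvalues: then the residue $U\Pi_k B$ is a rank-constrained product rather than a rank-one outer product, and recovering a genuine similarity $T$ (as opposed to merely matching the resolvent data) requires that the eigenspace dimensions and the induced factorizations of each residue agree between the two methods. I expect this multiplicity case to be the main source of technical care, and it is presumably why the hypothesis restricts to \emph{diagonalizable} $V$ and $\widehat V$ — that assumption guarantees clean spectral projectors and rules out Jordan blocks, so that residue-matching suffices to build $T$. Since the statement defers to \cite{b87}, I would close by citing that reference for the full multiplicity argument rather than reproducing it.
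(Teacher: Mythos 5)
The paper offers no proof for comparison here: it states ``We omit the proof of the following straightforward result'' and defers entirely to \cite{b87}, so your proposal must stand on its own. Your first direction is correct and complete: $\zeta I-T^{-1}VT=T^{-1}(\zeta I-V)T$, hence the resolvents are similar and the flanking factors $T$, $T^{-1}$ cancel, giving $\widehat N=N$ away from the two spectra.

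Your second direction contains a genuine gap, at the step ``if the two methods have the same $N$ on a punctured neighbourhood, their pole locations and residues must coincide, so $\sigma(\widehat V)=\sigma(V)$ with matching projectors.'' The transfer function sees only those eigenvalues $\lambda_k$ of $V$ whose residues $U\Pi_k B$ are non--zero; it does not see $\sigma(V)$ itself. Concretely, take $s=1$, $r=2$, $A=\widehat A=0$, $U=\widehat U=[1,\;0]$, $B=\widehat B=[1,\;0]\tr$, $V=\diag(1,-1)$, $\widehat V=\diag(1,1)$. Both methods satisfy the paper's standing assumptions (preconsistency, consistency, zero--stability), both $V$'s are diagonalizable, and $N(\zeta)=\widehat N(\zeta)=1/(\zeta-1)$ wherever defined; yet $V$ and $\widehat V$ have different spectra, so no non--singular $T$ with $\widehat V=T^{-1}VT$ can exist. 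Thus the implication you are attempting is false at the stated level of generality, and no amount of multiplicity bookkeeping (which is what you identify as ``the main obstacle'' and defer to \cite{b87}) can close it. What is missing is a minimality, or irreducibility, hypothesis in the sense of Kalman realization theory: writing $\Pi_k=X_kY_k\H$ with $Y_k\H X_k=I_{m_k}$, one needs $\mathrm{rank}(UX_k)=\mathrm{rank}(Y_k\H B)=m_k$ at every eigenvalue of $V$ (and likewise for $\widehat V$); only then is each residue $(UX_k)(Y_k\H B)$ a full--rank factorization, unique up to an invertible inner factor, from which your eigenbasis construction of $T$ can proceed. Diagonalizability, which you single out as the operative hypothesis, rules out Jordan blocks but does nothing to exclude invisible or rank--deficient eigenvalues; the irreducibility assumption is what \cite{b87} supplies, and it is implicit in the way the present paper later applies the lemma to a method and its adjoint.
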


\begin{theorem}\label{theo:nyqsymm}
A method $(A,\,U,\,B,\,V)$ is symmetric if and only if  there exists a permutation matrix $P$ such that $P^2=I$ and
\begin{equation}
N(\zeta)=-PN(\zeta^{-1})P,\qquad \zeta\in \C\setminus \Delta_0,\label{eq:nyqsymm}
\end{equation}
where  $\Delta_0:=\{\zeta\in \C\,:\, |\zeta|=1 \;\mbox{ or }\;\zeta=0\}.$
\end{theorem}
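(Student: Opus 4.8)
The plan is to route everything through the transfer function $N$ and its behaviour under the four elementary operations out of which the adjoint tableau on the right of \eqref{eq:symm} is built: negating $h$, inverting the map, conjugating the state by $L$, and permuting the stages by $P$. The engine of the proof is a single identity valid for \emph{every} method with invertible $V$: reading off the inverse map from \eqref{eq:istage}--\eqref{eq:Minv}, its tableau is $(A-UV^{-1}B,\,UV^{-1},\,-V^{-1}B,\,V^{-1})$, and a short resolvent calculation collapsing $V^{-1}(\zeta I-V^{-1})^{-1}V^{-1}=-\zeta^{-1}V^{-1}(\zeta^{-1}I-V)^{-1}$ shows that its transfer function is exactly $N(\zeta^{-1})$. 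To this I append the three trivial transformation rules: replacing $h$ by $-h$ sends $N(\zeta)\mapsto -N(\zeta)$; the state conjugation by $T$ in \eqref{eq:transf} leaves $N$ invariant (the content of Lemma \ref{lem:Nuniq}); and the stage permutation \eqref{eq:perm} conjugates $N$ by $P$. Composing these four rules in the order used to assemble the right-hand side of \eqref{eq:symm} shows that that tableau has transfer function $-PN(\zeta^{-1})P$, with $N$ the transfer function of the original method.

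With this in hand the \textbf{only if} direction is immediate: if the method is symmetric then the two tableaux in \eqref{eq:symm} coincide, hence so do their transfer functions, which is precisely \eqref{eq:nyqsymm}. For the \textbf{if} direction I assume \eqref{eq:nyqsymm} and, to invoke Lemma \ref{lem:Nuniq}, take $V$ diagonalizable (automatic for the zero-stable symmetric methods of interest, whose eigenvalues are unimodular and at worst semisimple). Let $\widehat M$ be the $P$-permuted inverse of $\M_{-h}$, i.e. the tableau $(P(UV^{-1}B-A)P,\,PUV^{-1},\,V^{-1}BP,\,V^{-1})$; by the engine its transfer function is $-PN(\zeta^{-1})P$, which by hypothesis equals $N(\zeta)$. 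Lemma \ref{lem:Nuniq} then supplies a non-singular $T$ matching the two tableaux, which in particular forces $P(UV^{-1}B-A)P=A$ (the first quadrant of \eqref{eq:symm}, also visible by letting $\zeta\to\infty$) together with $PUV^{-1}=UT$, $V^{-1}BP=T^{-1}B$ and $V^{-1}=T^{-1}VT$. Setting $L:=T$ and using $L^2=I$ turns these three relations into the remaining three quadrants of \eqref{eq:symm}, so it only remains to verify that $T$ may be chosen to be an involution.

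That last point is the main obstacle. Conjugating $V^{-1}=T^{-1}VT$ with itself gives $T^{-2}VT^{2}=V$, so $T^{2}$ lies in the commutant of $V$; when $V$ has distinct eigenvalues this means $T^{2}$ is diagonal in the eigenbasis of $V$, and the scaling freedom in the eigenvector choice left open by Lemma \ref{lem:Nuniq} can be exploited to normalize $T$ so that $T^{2}=I$. The consistency check that such a normalization exists is the involutivity of the map $N(\zeta)\mapsto -PN(\zeta^{-1})P$, whose square is the identity because $P^{2}=I$; this guarantees that the adjoint construction applied twice returns the original method, and hence that a self-inverse $L$ can be selected. Care must also be taken with the pairing of eigenvalues: $T$ interchanges the $\lambda$- and $\lambda^{-1}$-eigenspaces of $V$ while fixing the $\pm1$-eigenspaces, so the normalization must respect this block structure -- precisely the structure already exhibited in the canonical form of Subsection \ref{subs:V}, which is therefore the natural setting in which to finish the argument.
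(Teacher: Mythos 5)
Your proposal is correct and takes essentially the same route as the paper: both directions rest on the identity $N^{*}(\zeta)=-PN(\zeta^{-1})P$ for the adjoint tableau (which you derive by composing elementary transformation rules for $h\mapsto -h$, inversion, $L$-conjugation and $P$-permutation, where the paper does a single resolvent computation in \eqref{eq:nyqadj}), and your converse uses Lemma \ref{lem:Nuniq} followed by normalizing $T$ to an involution via the eigenstructure of $V$, exactly as in the paper. Your treatment of the final normalization $T^{2}=I$ (the ``main obstacle'') is at the same level of detail as the paper's own remark that $T$ may be altered on each eigensubspace of $V$ without affecting \eqref{eq:Tequiv}.
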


\vspace*{.05in}

\noindent
{\bf Remark:} Identity \eqref{eq:nyqsymm} is an $L$--free characterization of symmetry, which generalizes the $(\sigma/\rho)(\zeta)=-(\sigma/\rho)(\zeta^{-1})$ condition \cite{es92}  for multistep symmetry.
\begin{proof} ({\bf only if}) Given a method $(A,\,U,\,B,\,V)$, the method $(A^{*},\, U^{*},\, B^{*},\, V^{*})$ appearing on the right--hand side of \eqref{eq:symm} is the  {\em adjoint method}, see \cite{hlw}. From formula \eqref{eq:tranf}, 
 \begin{align}
 N^{*}(\zeta)&=P(UV^{-1}B-A)P+PUV^{-1}L(\zeta I-LV^{-1}L)^{-1}LV^{-1}BP\nonumber\\
&=P(-A+U(I+(\zeta V-I)^{-1})V^{-1}B)P\nonumber\\
&=-P(A+U(\zeta^{-1}I-V)^{-1}B)P=-PN(\zeta^{-1})P,\label{eq:nyqadj}
\end{align}
for $ \zeta\in \C\setminus \Delta_0$. For a symmetric method, \eqref{eq:symm} implies that $( A^{*},\,U^{*},\,B^{*},\,V^{*})=(A,\,U,\,B,\,V)$. Hence, \eqref{eq:nyqadj} implies \eqref{eq:nyqsymm}.

\noindent
({\bf if}) Now assume that  \eqref{eq:nyqsymm} holds for method $(A,\,U,\,B,\,V)$, and let $( A^{*},\,U^{*},\,B^{*},\,V^{*})$ denote its adjoint for $L=I$. From identity \eqref{eq:nyqadj}, we know that
\begin{equation*}
N(\zeta)=-PN(\zeta^{-1})P= N^{*}(\zeta),\qquad \zeta\in \C\setminus \Delta_0.
\end{equation*}
Applying Lemma \ref{lem:Nuniq},  there exists non--singular $T\in \C^{r\times r}$ such that
\begin{equation}
\left[\begin{array}{cc} A& U\\
B&V\end{array}\right]=\left[\begin{array}{cc}P(UV^{-1}B-A)P& PUV^{-1}T\\T^{-1}V^{-1}BP&T^{-1}V^{-1}T\end{array}\right].\label{eq:Tequiv}
\end{equation}
Using a diagonal decomposition of $V$, as in Subsection \ref{subs:V},  $T$ may be altered if necessary so that $T^2=I$ on each eigensubspace of $V$, without affecting identity \eqref{eq:Tequiv}. Thus, \eqref{eq:Tequiv} holds for $T=L$ auch that $L^2=I$.
\end{proof}

\subsection{A transfer function characterization of $G$-symplectic methods}\label{subs:Gdefn}
It is the purpose of symplectic, or canonical, one-step methods to preserve the value of $[y_n, y_n]_Q$ as $n$ increases, where the symmetric bi-linear function $[\cdot, \cdot]_Q$ is defined by
\begin{equation*}
[y,z]_Q:= \langle y, Qz\rangle,
\end{equation*}
$Q$ is a symmetric $N\times N$ matrix, and $\langle \cdot,\,\cdot\rangle$ is an inner product on $X=\R^N$.  If $[y,f(y)]_Q=0$, then $[y(x),y(x)]_Q$ is an invariant of the ODE \eqref{eq:ODE}.

For a general linear method \eqref{eq:method}, it is necessary to work in the higher dimensional space $X^r$ and we  consider the possible preservation
of $[y^{[n]},y^{[n]}]_{G\otimes Q}$ as  $n$ increases, where 
\begin{equation*}
[y, z]_{G\otimes Q} = \sum_{i,j=1}^r g_{ij} [y_i,z_j]_Q,\qquad y,\,z\in X^r,\qquad y_1,\ldots,\,y_r\in X,
\end{equation*}
and it will always be assumed that $G\in \C^{r\times r}$ is Hermitian and non-singular.
It is known \cite{hlw} that the conditions for $[y^{[n]},y^{[n]}]_{G\otimes Q}=[y^{[n-1]},y^{[n-1]}]_{G\otimes Q}$ are that 
there exists a real  diagonal $s\times s$ matrix  $D$ such that
\begin{equation}
M:=\left[
\begin{array}{cc} DA+A\H D - B\H GB&DU- B\H GV \\ U\H D - V\H GB &G-V\H G V   \end{array}
\right]=0.\label{eq:Mmat}
\end{equation}
Note that $A$ is assumed to remain real, but the other coefficient matrices may become complex--valued under a complex coordinate transformation $T$. Below, the method is assumed to be expressed in the canonical coordinates of Subsection \ref{subs:V}.

\begin{theorem}\label{theogree}
 Let  $(A,\,U,\,B,\,V)$ be a consistent method  with real non--singular diagonal matrix $D=${\rm diag}$(B\H e_1)$.  Then, the method is $G$--symplectic if and only if
 \begin{equation}
 N(\zeta)=-D^{-1}N\H(\zeta^{-1})D,\qquad \zeta\in \C\setminus \Delta_0.\label{eq:nyqgsymp}
\end{equation}
\end{theorem}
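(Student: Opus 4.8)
The plan is to mirror the proof of Theorem~\ref{theo:nyqsymm}, replacing the pair $(P,L)$ by the symplectic data $(D,G)$ and the reversal $N(\zeta)\mapsto -PN(\zeta^{-1})P$ by $N(\zeta)\mapsto -D^{-1}N\H(\zeta^{-1})D$. Throughout I read $N\H(\zeta)$ as the transfer function \eqref{eq:tranf} of the transposed method $(A\H,B\H,U\H,V\H)$, that is $N\H(\zeta)=A\H+B\H(\zeta I-V\H)^{-1}U\H$, so that $N\H(\zeta^{-1})$ is a meromorphic function of $\zeta$ and the claimed identity is an identity of rational matrix functions on $\C\setminus\Delta_0$.

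For the \emph{only if} direction I would assume \eqref{eq:Mmat} and read off its three independent blocks, noting that the lower-left block is the conjugate transpose of the upper-right one since $D$ is real diagonal and $G$ is Hermitian. From the $(2,2)$ block $V\H GV=G$, hence $V\H=GV^{-1}G^{-1}$ and $(\zeta^{-1}I-V\H)^{-1}=G(\zeta^{-1}I-V^{-1})^{-1}G^{-1}$; from the $(1,2)$ block $DU=B\H GV$, hence $D^{-1}B\H G=UV^{-1}$ and $G^{-1}U\H D=V^{-1}B$; and from the $(1,1)$ block combined with the last relation, $-D^{-1}A\H D=A-UV^{-1}B$. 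Substituting these into $-D^{-1}N\H(\zeta^{-1})D$ and collapsing the resolvent by the same manipulation used in \eqref{eq:nyqadj}, so that $V^{-1}(\zeta^{-1}I-V^{-1})^{-1}V^{-1}$ becomes $(\zeta^{-1}V-I)^{-1}V^{-1}=-\zeta(\zeta I-V)^{-1}V^{-1}$, reduces everything to $A+U(\zeta I-V)^{-1}B=N(\zeta)$. This half is thus a direct, if slightly lengthy, computation with no real obstacle.

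For the \emph{if} direction, assuming the identity, I would identify $-D^{-1}N\H(\zeta^{-1})D$ as the transfer function $N^{\dagger}(\zeta)$ of an explicit \emph{symplectic adjoint} method $\M^{\dagger}=(A^{\dagger},U^{\dagger},B^{\dagger},V^{\dagger})$, namely the basic adjoint of the transposed method conjugated by $D$: $V^{\dagger}=(V\H)^{-1}$, $U^{\dagger}=D^{-1}B\H(V\H)^{-1}$, $B^{\dagger}=(V\H)^{-1}U\H D$, and $A^{\dagger}=D^{-1}(B\H(V\H)^{-1}U\H-A\H)D$. The hypothesis reads $N(\zeta)=N^{\dagger}(\zeta)$, so Lemma~\ref{lem:Nuniq} supplies a nonsingular $T$ with $A=A^{\dagger}$, $U=U^{\dagger}T$, $B=T^{-1}B^{\dagger}$ and $V=T^{-1}V^{\dagger}T$. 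A short computation then shows that $G:=T$ satisfies every block of \eqref{eq:Mmat}: from $TV=(V\H)^{-1}T$ one gets $V\H TV=T$ (the $(2,2)$ block); $DU=B\H(V\H)^{-1}T=B\H TV$ (the $(1,2)$ block); and $DA+A\H D=B\H(V\H)^{-1}U\H D=B\H TB$ (the $(1,1)$ block, using $TB=(V\H)^{-1}U\H D$).

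The main obstacle is that the $G$ produced this way must be Hermitian and nonsingular, whereas the $T$ furnished by Lemma~\ref{lem:Nuniq} need not be either. Hermiticity comes for free: since \eqref{eq:Mmat} is linear in $G$ and one checks $M[G]\H=M[G\H]$, both $T$ and $T\H$ solve \eqref{eq:Mmat}, hence so does the Hermitian part $(T+T\H)/2$. The genuinely delicate point is securing nonsingularity simultaneously. As in the closing step of Theorem~\ref{theo:nyqsymm}, I would exploit the block-diagonal structure of $V$ in the canonical coordinates of Subsection~\ref{subs:V}, where $(V\H)^{-1}=V$ so that $T$ commutes with $V$ and is block diagonal over the eigenspaces, and normalize $T$ on each block to a Hermitian nonsingular representative without disturbing $N=N^{\dagger}$. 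Verifying that this normalization can always be carried out is where the real work of the proof lies.
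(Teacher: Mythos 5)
Your proposal is correct and takes essentially the same route as the paper: the \emph{only if} half is the same direct verification (the paper condenses it by sandwiching the matrix of \eqref{eq:Mmat} between the row $[\,I,\;B\H(\zeta^{-1}I-V\H)^{-1}\,]$ and the block column with entries $I$ and $(\zeta I-V)^{-1}B$, so that the $G$--terms cancel and $DN(\zeta)+N\H(\zeta^{-1})D=0$ drops out, while you eliminate $G$ block by block and collapse the resolvent --- the same algebra), and the \emph{if} half is exactly the paper's argument: Lemma~\ref{lem:Nuniq} applied to your $\M^{\dagger}$, whose tableau is precisely the right--hand side of \eqref{eq:preGsymp}, followed by replacing $T$ with its Hermitian part $\tfrac12(T+T\H)$ (your justification via the conjugation symmetry of \eqref{eq:Mmat} is a cleaner route to what the paper obtains by deriving $B\H(T-T\H)=0$ explicitly). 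The one point you single out as ``where the real work lies'' --- nonsingularity of $\tfrac12(T+T\H)$ --- is passed over in silence by the paper, which simply substitutes the Hermitian part back into \eqref{eq:preGsymp} (an identity that presupposes invertibility of $T$) and sets $G=T$; so your proposal matches the published proof in substance and is, on this point, merely more candid about a gap that the paper itself does not close.
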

\noindent
(Here, $N\H(\zeta^{-1})$ means evaluate the matrix function $N\H$ for the argument $\zeta^{-1}$.)

\noindent
{\bf Remark:} Identity \eqref{eq:nyqgsymp} is a $G$--free characterization of $G$--symplecticity. In the linear multistep case \cite{es92}, this is the same as \eqref{eq:nyqsymm}.
\begin{proof} ({\bf only if}) For $\zeta \in \C\setminus(\mbox{eig}(V)\cup\Delta_0)$, \eqref{eq:Mmat} implies that
\begin{eqnarray}
0&=&[I, \;\;B\H(\zeta^{-1}-V\H)^{-1}]\left[\begin{array}{cc}DA+A\H D-B\H GB& DU-B\H GV\\
U\H D-V\H GB& G-V\H GV\end{array}\right]\left[\begin{array}{c}I\\(\zeta I-V)^{-1}B\end{array}\right]\nonumber\\
&=&[DA+DU(\zeta I-V)^{-1}B]+[A\H D+B\H(\zeta^{-1}I-V\H)^{-1}U\H D]\nonumber\\
& &\quad +(1-\zeta.\overline{\zeta}^{-1})B\H(\zeta^{-1}-V\H)^{-1}G(\zeta I-V)^{-1}B\nonumber\\
&=& DN(\zeta)+N\H(\zeta^{-1})D.
\end{eqnarray}
({\bf if})  From \eqref{eq:nyqadj} and Lemma \ref{lem:Nuniq} it follows that there is a nonsingular $T\in \C^{r\times r}$ such that
\begin{equation}
\left[\begin{array}{cc}A&U\\B&V\end{array}\right]=\left[\begin{array}{ccc}D^{-1}(B\H V^{-{\mbox{\tiny\sf\bfseries H}}} U\H-A\H )D & \;\;& D^{-1}B\H V^{-{\mbox{\tiny\sf\bfseries H}}}T\\
T^{-1} V^{-{\mbox{\tiny\sf\bfseries H}}}U\H D& & T^{-1}V^{-{\mbox{\tiny\sf\bfseries H}}}T\end{array}\right].\label{eq:preGsymp}
\end{equation}
From the $(2,1)$ quadrant, $B=T^{-1}V^{-{\mbox{\tiny\sf\bfseries H}}}U\H D$, and hence $DUV^{-1}=B\H T^{H}$. 
From the $(1,2)$  and $(2,2)$  quadrants, $DUV^{-1}=B\H T$. Thus,
\begin{equation*}
0=DUV^{-1}-DUV^{-1}=B\H(T-T\H).
\end{equation*}
From the $(2,2)$  quadrant, $TV=V^{-{\mbox{\tiny\sf\bfseries H}}}T$, which implies $T^{H}V=V^{-{\mbox{\tiny\sf\bfseries H}}}T\H$. Hence, 
\begin{equation*}
\tfrac12(T+T\H)V=V^{-{\mbox{\tiny\sf\bfseries H}}}\tfrac12(T+T\H).
\end{equation*}
Hence, $\frac12(T+T\H)$ may be substituted for $T$ in \eqref{eq:preGsymp}; i.e. $T$ may be assumed to be Hermitian. We now observe that  \eqref{eq:preGsymp}  implies \eqref{eq:Mmat}, with $G=T$.
\end{proof}

\subsection{Methods that are both symmetric and $G$--symplectic}
\rule{0in}{.01in}\\
\begin{theorem}\label{theo:symmsymp}
If a  GLM   satisfies two of the following conditions, it satisfies all three:\\
(i) The method is symmetric;\\
(ii) The method is $G$--symplectic;\\
(iii) There exists a non--singular  $T\in \C^{r\times r}$ such that
\begin{equation}
\left[\begin{array}{cc} PDA&PDU\\ TB& TV\end{array}\right]=\left[\begin{array}{cc}(PDA)\H&B\H T\\ (PDU)\H& V\H T\end{array}\right].\label{eq:PDN2}
\end{equation}
Condition (iii) is equivalent to
\begin{equation}
PDN(\zeta)=(PDN(\zeta))\H,\qquad \zeta\in \C\setminus \Delta_0.\label{eq:PDNsymm}
\end{equation}
\end{theorem}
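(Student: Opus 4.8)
The plan is to prove the ``two implies three'' structure by combining the transfer-function characterizations already established. The key observation is that each of the three conditions is equivalent to a clean statement about $N(\zeta)$: by Theorem \ref{theo:nyqsymm}, symmetry is $N(\zeta)=-PN(\zeta^{-1})P$; by Theorem \ref{theogree}, $G$-symplecticity (with $D=\diag(B\H e_1)$) is $N(\zeta)=-D^{-1}N\H(\zeta^{-1})D$; and condition (iii), after I establish the stated equivalence, is the Hermitian-symmetry statement $PDN(\zeta)=(PDN(\zeta))\H$. Once all three are expressed as functional identities on $N(\zeta)$, the equivalence reduces to elementary algebra among three involution-type relations, and any two of them force the third.

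First I would establish that (iii)'s tableau identity \eqref{eq:PDN2} is equivalent to the Hermitian condition \eqref{eq:PDNsymm}. This follows the same pattern as the proof of Theorem \ref{theogree}: I would sandwich the matrix equation \eqref{eq:PDN2} between the row vector $[I,\;B\H(\zeta I-V\H)^{-1}]$ and the column vector $[I;\,(\zeta I - V)^{-1}B]$, which collapses the block identity into the single scalar-matrix statement $PDN(\zeta)=(PDN(\zeta))\H$, using the definition \eqref{eq:tranf} of $N$. The converse direction uses Lemma \ref{lem:Nuniq} to recover the tableau form from the transfer-function identity, exactly as in the ``if'' part of Theorem \ref{theogree}, with the usual adjustment of $T$ on each eigensubspace of $V$.

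With all three conditions phrased via $N(\zeta)$, the core of the argument is purely algebraic. Writing $S(\zeta):=PN(\zeta)$ and using $P\H=P$, $P^2=I$, symmetry reads $N(\zeta)=-PN(\zeta^{-1})P$; applying $P$ on the left this is $S(\zeta)=-N(\zeta^{-1})P$, i.e. $PN(\zeta)=-N(\zeta^{-1})P$. Condition (ii) reads $N(\zeta)=-D^{-1}N\H(\zeta^{-1})D$, and condition (iii) reads $(PDN(\zeta))=(PDN(\zeta))\H=N\H(\zeta)D P$ (using $D\H=D$, $P\H=P$). The plan is to show that composing any two of these relations reproduces the third by substituting one into the other and using $\zeta\leftrightarrow\zeta^{-1}$ together with the involutions $P^2=I$, $D$ real diagonal. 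For instance, assuming (i) and (ii), substitute the $G$-symplectic expression for $N(\zeta^{-1})$ into the symmetry relation to obtain an identity relating $N(\zeta)$ to $N\H(\zeta)$ that rearranges precisely into \eqref{eq:PDNsymm}; the other two implications are obtained by the same symmetric manipulation.

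The main obstacle I anticipate is bookkeeping rather than conceptual: one must be careful that $PD=DP$ (valid because $D$ is diagonal and $P$ is a permutation matrix fixing the principal stage, which requires checking $R=PRP$-type compatibility so that $P$ commutes with $D=\diag(B\H e_1)$), and that the conjugate-transpose interacts correctly with the argument inversion $\zeta\mapsto\zeta^{-1}$ on the unit-circle complement $\C\setminus\Delta_0$. Verifying that $P$ and $D$ commute is the delicate point: it rests on the consistency normalization and the block structure of the canonical form, and without it the three functional identities do not chain together cleanly. Once commutativity is secured, the remaining steps are direct substitutions, so I would not grind through them in detail.
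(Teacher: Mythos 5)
Your proposal follows essentially the same route as the paper: both reduce (i), (ii), (iii) to the transfer-function identities \eqref{eq:nyqsymm}, \eqref{eq:nyqgsymp}, \eqref{eq:PDNsymm} (via Theorem \ref{theo:nyqsymm}, Theorem \ref{theogree} and Lemma \ref{lem:Nuniq}) and then close the triangle of implications by direct substitution. The paper's proof is in fact terser than yours—it simply asserts the chain of pairwise equivalences—so your explicit attention to the commutation $PD=DP$ (which does hold, and can be pinned down using the consistency normalization $b\tr\1=1$ together with the pole structure of $N$ at $\zeta=1$ in the canonical form) is a genuine detail that the paper leaves implicit, and you have located its source correctly.
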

 
 \begin{proof} The equivalence of \eqref{eq:PDN2} and \eqref{eq:PDNsymm} follows from Lemma \ref{lem:Nuniq}. On the other hand symmetry and $G$--symplecticity are respectively equivalent to the transfer function indentities  \eqref{eq:nyqsymm} and  \eqref{eq:nyqgsymp}. The following equivalences for the transfer function  complete the proof:
\begin{equation*}
\mbox{\eqref{eq:PDNsymm}} + \mbox{\eqref{eq:nyqgsymp}}\Longleftrightarrow \mbox{\eqref{eq:PDNsymm}} + \mbox{\eqref{eq:nyqsymm}}\Longleftrightarrow \mbox{\eqref{eq:nyqsymm}}+\mbox{\eqref{eq:nyqgsymp}}.
\end{equation*}
\end{proof}

The following closely connected result, the proof of which we omit, is useful in the construction of methods that are both symmetric and $G$--symplectic. The canonical coordinates of Subsection \ref{subs:V} are assumed.
\begin{theorem}\label{theo:gsym}
Consider a consistent $(L,\,P)$--symmetric general linear method,  where  $A$ is lower triangular and $P$ is the reversing permutation matrix; i.e. $(Pv)_i=v_{s+1-i}$, $1\leq i\leq s$, for $v\in \C^s$.  Then, the method is G-symplectic if\\
{\rm(i)} non--zero real scalars $h_1,\, h_2, \dots, h_r$ exist such that 
\begin{equation}\label{eq:hi}
DUe_i = h_i \zeta_i B\H e_i,\qquad 1\leq i\leq r,
\end{equation}
where $h_1$ is such that $D=h_1$ {\rm diag}$(B\H e_1)$.\\
{\rm(ii)} The diagonal  part of $A$ satisfies
\begin{equation}
\diag(a_{11}, a_{22}, \dots, a_{s-1,s-1}, a_{ss})=\diag(a_{ss}, a_{s-1,s-1}, \dots, a_{22}, a_{11} ).\label{eq:adiag}
\end{equation}
If the eigenvalues of $V$ are distinct and {\rm diag}$(D)$ has no zero elements, then conditions \eqref{eq:hi} and \eqref{eq:adiag} are also necessary for $G$--symplecticity.
\end{theorem}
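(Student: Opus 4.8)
The plan is to reduce everything to Theorem~\ref{theo:symmsymp}. Since the method is assumed symmetric, that theorem says it suffices to verify its condition~(iii), i.e.\ to exhibit a nonsingular Hermitian $T$ satisfying \eqref{eq:PDN2} (equivalently \eqref{eq:PDNsymm}); $G$-symplecticity then follows for free. So for the sufficiency direction the whole task becomes: produce a suitable $T$ and check the four quadrants of \eqref{eq:PDN2}.

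Before choosing $T$ I would record the structural consequences of the hypotheses in the canonical coordinates of Subsection~\ref{subs:V}. Applying $B = VLBP$ from \eqref{eq:symm4} to the principal direction (where $Le_1 = e_1$ by consistency) shows that the first row of $B$ is reversal-symmetric, so $D = h_1\diag(B\H e_1)$ is real and commutes with $P$, i.e.\ $PD = DP$. From the canonical form, $L$ is real with $L\H = L$ and $L\overline V = VL$, equivalently $LV = \overline V L$; the same relation $B = VLBP$ gives $PB\H = B\H L V^{-1}$. Finally --- and this is the key compatibility --- the conjugate-pair structure of the blocks $U_j,\overline U_j$ and $B_j,\overline B_j$ (cf.\ \eqref{eq:b1u1}) forces the scalars in \eqref{eq:hi} to agree across each $L$-paired slot, $h_i = h_{i'}$, so that $H := \diag(h_1,\dots,h_r)$ commutes with both $V$ and $L$.

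With these in hand I would take $T = LH$; since $LH = HL$ and $L,H$ are Hermitian, $T$ is Hermitian. Writing \eqref{eq:hi} as $DU = B\H HV$, quadrant $(2,2)$ reduces to $LV = \overline V L$; quadrant $(1,2)$ becomes $PDU = (PB\H)HV = B\H L V^{-1}HV = B\H LH = B\H T$; and quadrant $(2,1)$ follows from $(1,2)$ by taking adjoints since $T$ is Hermitian. The remaining quadrant $(1,1)$, that $PDA$ be Hermitian, is where conditions \eqref{eq:hi} and \eqref{eq:adiag} combine: multiplying the $(1,1)$ block $A+PAP = UV^{-1}B$ of \eqref{eq:symmdef} on the left by $D$ and using $DU = B\H HV$ gives $DA + DPAP = B\H HB$, which is Hermitian (indeed real symmetric). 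Splitting $A$ into its strictly lower part and its diagonal and using $PD = DP$, the strictly-lower-triangular part of this identity delivers exactly the symmetry of the off-diagonal of $PDA$, while the diagonal part is precisely \eqref{eq:adiag}. Hence $PDA = (PDA)\H$, all of \eqref{eq:PDN2} holds, and Theorem~\ref{theo:symmsymp} yields $G$-symplecticity.

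For the converse, assume distinct eigenvalues of $V$ and $\diag(D)$ with no zeros. Symmetry and $G$-symplecticity give condition~(iii) via Theorem~\ref{theo:symmsymp}, so \eqref{eq:PDN2} holds for some Hermitian $T$ with $TV = V\H T$; distinctness forces $T$ to inherit the pairing structure of $L$. The diagonal of quadrant $(1,1)$ then reads $d_i a_{ii} = d_i a_{s+1-i,s+1-i}$, giving \eqref{eq:adiag} since $d_i\neq0$, while quadrant $(1,2)$ rewritten through $PB\H = B\H L V^{-1}$ gives $DUe_i = \zeta_i T_{i'i}\,B\H e_i$, so \eqref{eq:hi} holds with $h_i := T_{i'i}$, nonzero because $T$ is nonsingular. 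I expect the main obstacle to sit here and in its forward-direction dual: proving $h_i = h_{i'}$ (so that $T = LH$ really is Hermitian) and, conversely, that the extracted $h_i$ are real. Both rest on the conjugate-pair structure of the canonical coordinates together with the reality of the original method, and this bookkeeping --- rather than any single hard estimate --- is the delicate part of the argument.
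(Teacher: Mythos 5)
First, a framing note: the paper states Theorem~\ref{theo:gsym} explicitly \emph{without} proof (``the proof of which we omit''), so your attempt can only be judged on its own merits, not against the authors' argument.

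Your sufficiency direction is correct. In the canonical coordinates of Subsection~\ref{subs:V}, $e_1\tr B=e_1\tr BP$ does give $PD=DP$; the relations $L\overline V=VL$ and $B=VLBP$ do give $PB\H=B\H LV^{-1}$; and conjugating \eqref{eq:hi} --- crucially using that the hypothesis supplies \emph{real} $h_i$ --- forces $h_{i'}=h_i$ on conjugate-paired slots whenever $B\H e_{i'}\neq0$ (and lets you redefine $h_{i'}:=h_i$ otherwise), so $T=LH$ is Hermitian and nonsingular. I have checked all four quadrant computations for \eqref{eq:PDN2}, including the reduction of the $(1,1)$ quadrant to the symmetric part $DA+DPAP=B\H HB$ plus $PD=DP$, lower triangularity and \eqref{eq:adiag}; they are sound. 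Two small caveats: $B\H HB$ is Hermitian but not real symmetric in general, since $B$ is complex in these coordinates; and Theorem~\ref{theo:symmsymp} rests on Theorem~\ref{theogree}, whose hypothesis requires $D$ nonsingular, which the sufficiency half of Theorem~\ref{theo:gsym} does not assume. Your computations in fact verify all four blocks of \eqref{eq:Mmat} directly with $G=H$, which bypasses the transfer-function machinery and removes that restriction.

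The genuine gap is in the necessity direction, exactly where you flagged it: the reality of the extracted $h_i$ for complex conjugate eigenvalue pairs. Writing $i'$ for the slot carrying $\overline{\zeta_i}$, Hermiticity of $T$ gives $T_{ii'}=\overline{T_{i'i}}$, i.e.\ $h_{i'}=\overline{h_i}$; and the conjugate-pair bookkeeping you propose (conjugating $PDU=B\H T$ columnwise, using $Ue_{i'}=\overline{Ue_i}$ and $B\H e_{i'}=\overline{B\H e_i}$, together with the reality of the original method) yields the \emph{same} relation $h_{i'}=\overline{h_i}$ and nothing more. So the ingredients you list cannot exclude non-real $h_i$: when $B\H e_i\neq0$ the value $T_{i'i}$ is pinned, and no amount of pairing symmetry makes it real. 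What does force reality lives at the level of \eqref{eq:Mmat}, not of condition (iii): in canonical coordinates $V\H=V^{-1}$, so the $(2,2)$ block of \eqref{eq:Mmat} says $G$ \emph{commutes} with $V$; distinct eigenvalues then make $G$ diagonal, and a Hermitian diagonal nonsingular matrix is real with nonzero entries $g_i$. Consistency ($Ue_1=\1$) applied to the first column of the $(1,2)$ block identifies the $D$ of \eqref{eq:Mmat} as $g_1\diag(B\H e_1)$, i.e.\ the theorem's $D$ with $h_1=g_1$; the remaining columns of the $(1,2)$ block are precisely \eqref{eq:hi} with $h_i=g_i$ real and nonzero; and the $(1,1)$ block combined with $A+PAP=UV^{-1}B$ and $d_i\neq0$ gives \eqref{eq:adiag}. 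In short, run necessity directly through \eqref{eq:Mmat}, whose intertwiner commutes with $V$, rather than through the $T$ of \eqref{eq:PDN2}, which intertwines $V$ with $V\H$ and whose Hermitian symmetry therefore only couples paired slots; equivalently, observe that such a $T$ may be taken to be $LG$, which is where the real diagonal $H$ actually comes from.
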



\def\FF{\mbox{\sf\bfseries F}}

\section{Symmetry and even order results}\label{sec:even}
\subsection{Even order for the general linear method}
The method $\M_h$ is of order $p\in {\mathbb N}$ relative to the starting method $S_h$ if

\begin{equation}
S_{h}E_{h}y_0-\M_{h}S_h y_0=C_{p+1}(y_0)h^{p+1}+ O(h^{p+2}),
\label{eq:order}
\end{equation}
where, for  $T_{p+1}$ the set of rooted trees of order $p+1$, elementary differentials $\FF(t)(y_0)\in X$, symmetry coefficients $\sigma(t)\in \R$ and weight vectors $\Psi(t)\in \C^r$,
\begin{equation*}
C_{p+1}(y_0):=\sum_{t\in T_{p+1}}\Psi(t)\frac{\FF (t)(y_0)}{\sigma(t)}\in X^r.
\end{equation*}
The order of the method $\M_h$ is  $p\in {\mathbb N}$, if $p$ is the greatest integer  such that there is an $S_h$ relative to which $\M_h$ has order $p$.

Following the work in Subsection \ref{subs:V}, we assume that the method may be written in coordinates such that $V$,  $B$ and $U$ take  the form
\begin{equation}
V = \left[\begin{array}{cc}  1 & \0\tr\\ \0 & \dot V  \end{array}\right], \qquad B = \left[\begin{array}{c}  b\tr \\ \dot B \end{array}\right],\qquad 
U = \left[\begin{array}{cc}  \1 & \dot U  \end{array}\right]. \label{eq:bldiag}
\end{equation}
In particular, we note that $I_{r-1}-\dot V$ is non-singular.

We assume that the method is of 
of order $p$ relative to the starting method $S_h$. Written in the new basis, the principal component of $S_h$ is represented by the
B-series $\zeta$, (see \cite{hnw}). The remaining components are given by the vector of  B-series, $\xi$.   For some $\eta$, representing the stage values, the stage equations and the update equations for the principal and non--principal components may be written in terms of B-series: 
\begin{align}
\eta(t) &= A(\eta D)(t) +\1 \zeta(t) + \dot U\xi(t),\label{eq:eta2}\\
(E\zeta)(t) &=  b\tr (\eta D)(t) + \zeta(t),\label{eq:Ez2}\\
(E\xi)(t) &= \dot B(\eta D)(t) + \dot V\xi(t),\label{eq:Ex2}
\end{align}
for all $t$ such that $|t|\le p$. Suppose that a second starting method $\widehat{S}_{h}$ is similarly represented by B-series  $\widehat \zeta$ and $\widehat \xi$.

\begin{lemma}\label{lem:Suniq}
Suppose that the method $\M_h$ is of order $p$ relative to $\S_h$ and also of order
$p$ relative to $\widehat{S}_{h}$,  and that $\zeta(\ta)=\widehat{\zeta}~(\ta)$. Then,
\begin{align*}
\widehat\zeta(t) &=\zeta(t),\quad |t| \le p-1,\\
\widehat\eta(t) &=\eta(t),\quad |t| \le p-1,\\
\widehat\xi(t) &=\xi(t),\quad |t| \le p.
\end{align*}
where $\widehat \eta$ is defined by \eqref{eq:eta2} but for the
starting method $[\widehat \zeta, \widehat \xi]$.
\end{lemma}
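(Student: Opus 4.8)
The plan is to induct on tree order, exploiting the triangular structure of the three B-series relations \eqref{eq:eta2}--\eqref{eq:Ex2}. Throughout write $\Delta\phi:=\widehat\phi-\phi$ for $\phi\in\{\zeta,\eta,\xi\}$, noting that $\zeta$ is scalar-valued while $\eta\in\C^s$ and $\xi\in\C^{r-1}$ per tree. Two structural facts drive everything. First, the stage-derivative operator obeys $(\phi D)(t)=\prod_j\phi(t_j)$ over the branches $t_j$ hanging from the root of $t$, so $(\phi D)(t)$ depends only on $\phi$ at trees of order strictly below $|t|$. Second, $E$, which advances the input by one exact step, acts by composition with the flow B-series $e$ (normalized by $e(\emptyset)=1$), so $(E\phi)(t)=\phi(t)+R(\phi;t)$ with $R(\phi;t)$ depending only on $\phi$ at trees of order $<|t|$. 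Thus every right-hand side is lower-triangular in tree order apart from the diagonal terms $\zeta(t)$ in \eqref{eq:Ez2} and $\xi(t)$ in \eqref{eq:Ex2}.

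I would first settle the low orders. Both are starting methods, so $\zeta(\emptyset)=\widehat\zeta(\emptyset)=1$; evaluating \eqref{eq:Ex2} at $t=\emptyset$ (where $(\eta D)(\emptyset)=\0$) and using that $I_{r-1}-\dot V$ is non-singular forces $\xi(\emptyset)=\widehat\xi(\emptyset)=\0$, whence $\Delta\eta(\emptyset)=\0$ from \eqref{eq:eta2}. At order one, $\Delta\zeta(\ta)=0$ is the hypothesis; \eqref{eq:Ex2} at $t=\ta$ then gives $(I_{r-1}-\dot V)\Delta\xi(\ta)=\dot B\,\Delta(\eta D)(\ta)=\0$, so $\Delta\xi(\ta)=\0$, and \eqref{eq:eta2} yields $\Delta\eta(\ta)=\0$. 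Hence all three series agree through order one.

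For the inductive step I maintain the invariant that, after stage $m$, the differences $\Delta\zeta,\Delta\eta$ vanish through order $m$ and $\Delta\xi$ vanishes through order $m+1$; the target orders $p-1$ and $p$ are reached at $m=p-1$. Passing from $m-1$ to $m$ involves three moves. First, by the lower-triangularity of $D$ and the inductive vanishing, relation \eqref{eq:eta2} collapses at $|t|=m$ to $\Delta\eta(t)=\1\,\Delta\zeta(t)$, since $\Delta(\eta D)(t)=\0$ and $\Delta\xi(t)=\0$. Second---the crux---I read off $\Delta\zeta(t)=0$ for $|t|=m$ from \eqref{eq:Ez2} at order $m+1$ (legitimate as $m+1\le p$): cancelling the diagonal $\zeta(t')$ leaves $\Delta R(\zeta;t')=b\tr\Delta(\eta D)(t')$ for every $t'$ with $|t'|=m+1$, whose top-order part involves only $\Delta\zeta$ at order $m$. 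On the left this is the single-leaf contribution $\sum_{|s|=m}\kappa(t',s)\Delta\zeta(s)$, where $\kappa(t',s)$ counts the leaves $\ell$ of $t'$ with $t'\ominus\ell=s$; on the right, $\Delta(\eta D)(t')$ survives only when $t'=[s]$ carries a single branch $s$ of order $m$, giving $b\tr\,\1\,\Delta\zeta(s)=\Delta\zeta(s)$ via $b\tr\1=1$ (a consequence of consistency, Definition~\ref{defn:cons}). Third, with $\Delta\eta,\Delta\zeta$ now null through order $m$, \eqref{eq:Ex2} at order $m+1$ reduces to $(I_{r-1}-\dot V)\Delta\xi(t)=\0$ for $|t|=m+1$, extending $\Delta\xi$ to order $m+1$ by non-singularity of $I_{r-1}-\dot V$.

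The main obstacle is the second move: showing the homogeneous system $\{\Delta R(\zeta;t')=b\tr\Delta(\eta D)(t')\}_{|t'|=m+1}$ admits only $\Delta\zeta\equiv 0$ on order-$m$ trees. Here the principal eigenvalue $1$ of $V$ destroys the algebraic invertibility that the $\xi$-block enjoys---the diagonal of $E-1$ is zero---so the coefficient $\Delta\zeta(t)$ can only be recovered one order higher, from the leaf-grafting term; this is also precisely why $\zeta,\eta$ reach only order $p-1$ whereas $\xi$ reaches $p$. The degeneracy is genuine at order one: at order two the only equation is the tautology $\Delta\zeta(\ta)=\Delta\zeta(\ta)$, which is exactly why $\zeta(\ta)$ must be prescribed, but from order two onward the bushy trees furnish non-trivial relations (for instance the order-three cherry forces $2\,\Delta\zeta(s)=0$ for the order-two tree $s$). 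I expect the required injectivity to follow from a short secondary induction on the shape of $s$, selecting for each order-$m$ tree a suitable order-$(m+1)$ tree that isolates $\Delta\zeta(s)$ against coefficients already shown to vanish, rather than from any single closed-form identity.
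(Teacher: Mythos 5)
Your induction scheme, base cases, and reduction of the inductive step to a linear system for the order-$m$ differences $\Delta\zeta$ coincide with the paper's proof, and your analysis of both sides of the order-$(m+1)$ relations --- including the order-one degeneracy that forces $\zeta(\tau)$ to be prescribed, and the cherry example at order three --- is sound. However, the step you yourself label ``the crux'' is not actually proved: you only state that you \emph{expect} injectivity of the homogeneous system to follow from ``a short secondary induction on the shape of $s$'', without specifying which order-$(m+1)$ tree to attach to each order-$m$ tree, what the secondary induction runs over, or why the resulting system is solvable. Since the whole lemma rests on exactly this point, this is a genuine gap --- and it is precisely where the paper invests its one combinatorial idea.

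The paper's device is the following. Write $t=[\tau^m t_1 t_2\cdots t_n]$, with root valency $w(t)=m+n$, and for each order-$k$ tree $t$ use the single equation obtained by substituting $t\tau=[\tau^{m+1}t_1\cdots t_n]$ (the extra leaf grafted \emph{at the root}) into \eqref{eq:Ez2}; process the order-$k$ trees in a sequence of non-increasing $w$. In $(E\zeta)(t\tau)$ the unknown $\zeta(t)$ appears with coefficient $m+1\neq 0$, one contribution for each root leaf of $t\tau$ whose removal recovers $t$; deleting any leaf \emph{not} adjacent to the root instead yields an order-$k$ tree of root valency $w(t)+1$, which occurs strictly earlier in the sequence, so its coefficient is already known to agree with the hatted one. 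Moreover, for $k\ge 2$ every branch of $t\tau$ (namely $\tau$ and the $t_i$) has order at most $k-1$, so the right-hand side $b\tr(\eta D)(t\tau)$ is fixed by the induction hypothesis alone --- with this choice, your problematic case $t'=[s]$, which is what produces the tautologies, never arises. The system is therefore triangular with nonzero diagonal, which is exactly the ``isolation'' you were hoping for; your order-three cherry computation is the special case $t=[\tau]$, $t\tau=[\tau^2]$ of this general device.
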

\begin{proof}
We first recall and extend some notation on trees.  If
$|t_1|, \dots, |t_n| \geq 1$ then 
\begin{equation}
t= [\tau^m t_1 t_2 \cdots t_n] \label{eq:taum}
\end{equation}
denotes a rooted tree with order
\begin{equation*}
|t| = 1+m + |t_1|+|t_2|+\cdots+ |t_n|
\end{equation*}
formed by joining the roots of $m$ copies of $\tau$ and
each of the roots of $t_i$ ($i=1,2,\dots,n$) to a new root.

The valency of the root of $t$,  will be written as
\begin{equation*}
w(t) = m+n.
\end{equation*}
The binary product of trees will be used in the special case
\begin{equation*}
t\tau =  [\tau^{m+1} t_1 t_2 \cdots t_n],
\end{equation*}
where $t$ is given by \eqref{eq:taum}.   Note that $w(t\tau) = w(t)+1$.

If $\eta$ is the B-series representing stage values of a general 
linear method, then for this same $t$, the B-series for the stage derivatives 
are given by
\begin{equation*}
(\eta D)(t) = \eta(\tau)^m\prod_{i=1}^n \eta(t_i),
\end{equation*}
where the powers and products on the right-hand side are componentwise.
We will prove by induction on $k=1,2,\dots,p-1$, 
\begin{align}
\widehat\zeta(t) &=\zeta(t),\quad |t| \le k,\label{eq:hatz}\\
\widehat\eta(t) &=\eta(t),\quad |t| \le k,\label{eq:hate}\\
\widehat\xi(t) &=\xi(t),\quad |t| \le k+1.\label{eq:hatx}
\end{align}
Note that \eqref{eq:hatz} and \eqref{eq:hate} are true when $k=0$, and (i) 
follows from \eqref{eq:Ex2} by substituting the tree $t=\tau$
to give
\begin{equation*}
 \xi(\tau) =(I-\dot V)^{-1} \dot B \1,
\end{equation*}
with the same result for $\widehat \xi(\tau)$.
 Now assume the result for integers less than $k$,
 and we prove  \eqref{eq:hatz}  for a specific $k\in\{1,2,\dots,p-1\}$.  For $k=1$,
 this holds by assumption.  For $k>1$, consider each tree
 $t$  of order $k$ in a sequence in which $w(t)$ is non-increasing.
 For $t$ given by \eqref{eq:taum}, substitute $t\tau$ into \eqref{eq:Ez2} to give the result
 \begin{equation*}
(m+1) \zeta(t) =  b\tr \eta(\tau)^{m+1}\prod_{i=1}^n \eta(t_i)-C(t,\zeta),
 \end{equation*}
 where $C(t,\zeta)$ involves trees already considered for lower $k$ and for
 trees with this same order which occurred earlier in the sequence.
 Obtain a similar result for $\widehat\zeta$ and note that the terms
 on the right-hand side are identical in the two cases.  The result \eqref{eq:hate}
 follows from \eqref{eq:eta2} and the corresponding formula for $\widehat\eta(t)$.
 To prove \eqref{eq:hate} for any tree of order $k+1$, use \eqref{eq:Ex2} to obtain a formula
 for $(I-\dot V) \xi(t)$ with the same result for $(I-\dot V) \widehat \xi(t)$.
\end{proof}

\noindent
{\bf Remarks:} (i) The proof of Lemma \ref{lem:Suniq} serves as a constructive proof of the existence of $\S_h$. Note that the order $h^p$ coefficient of $\zeta$ is arbitrary.\\
(ii)  The assumption $\zeta(\ta)=\widehat{\zeta}(\ta)$ can always be assumed because, if it were not true then $\S_h$ can be replaced by $\S_h E_{\theta h}$ for a suitable $\theta\in \R$.
  
\begin{lemma}\label{lem:symmS}
Suppose that the method $\M_h$ is symmetric and of  order $p$ relative to $\S_h$, such that $\zeta(\ta)=0$. Then, $\M_h$ is also of order $p$ relative to both $L\S_{-h}$ and the symmetric starting method $\frac12(S_h + LS_{-h})$. The B-series for all $3$ starting methods agree up to order $p$, except possibly in the first component of the trees of order $p$.
\end{lemma}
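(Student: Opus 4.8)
The plan is to treat the order relation \eqref{eq:order} as an identity of formal B-series in $h$ and to exploit the symmetry relation \eqref{eq:altsymm} in the rearranged form $\M_{-h}=L\M_h^{-1}L$, together with the group property $E_{-h}E_h=I$ from \eqref{eq:group}. First I would show that $\M_h$ has order $p$ relative to $LS_{-h}$. Starting from $\M_h\S_h=\S_hE_h+O(h^{p+1})$ and substituting $h\mapsto-h$ (a legitimate operation on formal power series, under which $O(h^{p+1})$ is preserved) gives $\M_{-h}S_{-h}=S_{-h}E_{-h}+O(h^{p+1})$. Multiplying on the left by $L$ and using $L\M_{-h}=\M_h^{-1}L$ yields $\M_h^{-1}(LS_{-h})=(LS_{-h})E_{-h}+O(h^{p+1})$. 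Composing on the right with $E_h$ and invoking $E_{-h}E_h=I$ removes the flow from the right-hand side, and applying $\M_h$ on the left then produces $(LS_{-h})E_h=\M_h(LS_{-h})+O(h^{p+1})$, which is exactly order $p$ relative to $LS_{-h}$. The recurring technical point is that composing a relation valid to $O(h^{p+1})$ with the smooth invertible maps $\M_h^{\pm1}$ and $E_h$ preserves the order of the remainder.

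Next I would verify that $\widetilde S_h:=LS_{-h}$ meets the hypothesis of Lemma \ref{lem:Suniq}, i.e.\ that its principal B-series agrees with $\zeta$ on $\ta$. Its principal coefficient is $\widetilde\zeta(t)=e_1\tr LS_{-h}(t)$, and in the canonical coordinates of \eqref{eq:bldiag} the matrix $L$ is symmetric and fixes the principal direction up to sign ($Le_1=\pm e_1$, since $VLe_1=Le_1$ follows from the $(2,2)$-block $V=LV^{-1}L$ of \eqref{eq:symm}); hence $\widetilde\zeta(t)=\pm(-1)^{|t|}\zeta(t)$, so $\widetilde\zeta(\ta)=\mp\zeta(\ta)=0=\zeta(\ta)$. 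With $\S_h$ and $\widetilde S_h$ both of order $p$ and agreeing on $\ta$, Lemma \ref{lem:Suniq} yields $\widetilde\zeta(t)=\zeta(t)$ for $|t|\le p-1$ and $\widetilde\xi(t)=\xi(t)$ for $|t|\le p$; that is, the two B-series agree up to order $p$ save possibly in the principal (first) component of the order-$p$ trees.

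Finally I would treat the averaged method $\bar S_h:=\tfrac12(S_h+LS_{-h})$. It is genuinely symmetric: substituting $h\mapsto-h$ gives $\bar S_{-h}=\tfrac12(S_{-h}+LS_h)$, whence $L\bar S_{-h}=\tfrac12(LS_{-h}+S_h)=\bar S_h$, which is \eqref{eq:startminus}. Its B-series is the average of those of $S_h$ and $\widetilde S_h$, so it agrees with both up to order $p$ except possibly in the first component at order $p$, establishing the final sentence for all three methods. To obtain order $p$ relative to $\bar S_h$, I would invoke the remark after Lemma \ref{lem:Suniq} that the order-$h^p$ coefficient of $\zeta$ is arbitrary: in \eqref{eq:Ez2} the term $\zeta(t)$ cancels between the two sides at $|t|=p$, while $\bar\zeta(t)$ at $|t|=p$ enters \eqref{eq:eta2} only through $\bar\eta(t)$, which in turn affects \eqref{eq:Ez2}--\eqref{eq:Ex2} only at order $p+1$. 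Since $\bar S_h$ differs from the order-$p$ method $S_h$ solely in this free coefficient, the relations \eqref{eq:Ez2}--\eqref{eq:Ex2} continue to hold for $\bar S_h$ at $|t|\le p$.

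The main obstacle I anticipate is the careful B-series bookkeeping behind two claims: that pre- and post-composition with $\M_h^{\pm1}$ and $E_h$ preserve the $O(h^{p+1})$ remainder in the first step, and that the principal order-$p$ coefficient is genuinely free in the last step. Both rest on the triangular dependence of the equations \eqref{eq:eta2}--\eqref{eq:Ex2} — each coefficient at order $k$ being determined by coefficients of order $\le k$, with the order-$p$ principal coefficient propagating only into order-$(p+1)$ conditions — so the difficulty is one of verifying this dependence structure rather than of introducing any genuinely new idea.
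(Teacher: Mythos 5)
Your proof is correct and takes essentially the same route as the paper's: transfer the order-$p$ relation \eqref{eq:order} to $L\S_{-h}$ via the symmetry $\M_h=L\M_{-h}^{-1}L$ together with the flow group property, then apply Lemma \ref{lem:Suniq} (your check of its hypothesis is precisely why the assumption $\zeta(\tau)=0$ appears in the statement), and finally use the fact that the order-$h^p$ principal coefficient is free to conclude order $p$ relative to the averaged starting method. The only differences are cosmetic: the paper tracks the explicit leading error term $LV^{-1}C_{p+1}(y_0)(-h)^{p+1}$ in \eqref{eq:LS} because Theorem \ref{theo:evenloc} needs it later, whereas your $O(h^{p+1})$ bookkeeping suffices for the lemma itself; and your hedge $Le_1=\pm e_1$ is unnecessary, since the canonical form of Subsection \ref{subs:V} (recalled in Lemma \ref{lem:fin}) fixes $Le_1=e_1$ — with the minus sign $L\S_{-h}$ would violate preconsistency, so that case cannot occur.
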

\begin{proof}
Consider  \eqref{eq:order} with  $h$ and  $y_0$ replaced by $-h$ and  $y_1=E_h y_0$. A left--{mul\-tip\-lic\-ation} by $L\M_{-h}^{-1}$ then yields 
\begin{equation*}
(L\M_{-h}^{-1}L)(LS_{-h})E_{-h}y_1=LS_{-h}y_1 +  LV^{-1}C_{p+1}(y_1)(-h)^{p+1} +O(h^{p+2}),
\end{equation*}
where we  note that the Fr\'{e}chet derivative of $L\M_{-h}^{-1}$ is  $LV^{-1}+O(h)$.
Symmetry implies $\M_h=L\M_{-h}^{-1}L$; also, $C_{p+1}(y_1)=C_{p+1}(y_0)+O(h)$. Thus,
\begin{equation}
\M_h (LS_{-h})y_0=(LS_{-h})E_h y_0+  LV^{-1}C_{p+1}(y_0)(-h)^{p+1}+O(h^{p+2}),\label{eq:LS}
\end{equation}
and so $\M_h$ is of order $p$ relative to $L\S_{-h}$. Now, by Lemma \ref{lem:Suniq} the B-series for $L\S_{-h}$, and therefore also that for  $\frac12(S_h + LS_{-h})$, agrees with the B-series for $\S_h$ up to order $p$, except possibly in the first component of the trees of order $p$. The proof of Lemma \ref{lem:Suniq} shows that this is sufficient for $\frac12(S_h + LS_{-h})$ to be a starting method relative to which $\M_h$ is of order $p$.
\end{proof}

\begin{lemma}\label{lem:uerror}
If $(\M_{h},\,S_h)$ satisfy \eqref{eq:order} for some $p\in \N$,  then $S_h$ may be chosen so that 
\begin{equation}
S_h E_h y_0=\M_h S_{h}y_0+ K_{p+1}(y_0)e_1h^{p+1}+O(h^{p+2}),\label{eq:uerror}
\end{equation}
where $K_{p+1}(y_0):=e_1\tr  C_{p+1}(y_0)\in X$.
\end{lemma}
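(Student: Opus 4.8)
The plan is to begin with the given order-$p$ starting method $S_h$ and modify it at order $h^{p+1}$ so as to rotate the leading error vector $C_{p+1}$ into the principal direction $e_1$. Since an $O(h^{p+1})$ change of the starting method does not disturb the order-$p$ relation \eqref{eq:order}, this is a legitimate way to ``choose $S_h$'' in the sense of the statement, and it reduces the whole problem to one linear-algebra step governed by the block structure \eqref{eq:bldiag}.

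Concretely, I would set $\widetilde{S}_h = S_h + \delta$, where $\delta$ is a perturbation whose B-series begins at order $p+1$, say $\delta(y_0)=\Delta(y_0)h^{p+1}+O(h^{p+2})$ with $\Delta:X\to X^r$ an elementary-differential-valued coefficient to be chosen. Expanding $\widetilde{S}_h E_h y_0-\M_h\widetilde{S}_h y_0$ and using $E_h y_0=y_0+O(h)$ together with the Fréchet expansion $D\M_h=V+O(h)$, which is immediate from \eqref{eq:stage} and \eqref{eq:Mh}, the perturbation contributes $(I-V)\Delta(y_0)\,h^{p+1}$, while the nonlinear remainder is $O(\|\delta\|^2)=O(h^{2p+2})\subseteq O(h^{p+2})$. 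Hence the new leading error coefficient is $\widetilde{C}_{p+1}=C_{p+1}+(I-V)\Delta$.

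In the canonical coordinates \eqref{eq:bldiag} we have $I-V=\diag(0,\,I_{r-1}-\dot V)$, so $(I-V)\Delta$ has vanishing first component and non-principal part $(I_{r-1}-\dot V)\Delta_{2:r}$. The first component is therefore untouched, $e_1\tr\widetilde{C}_{p+1}=e_1\tr C_{p+1}=:K_{p+1}$, which is consistent with the definition of $K_{p+1}$ in the statement. Because $I_{r-1}-\dot V$ is non-singular (as noted after \eqref{eq:bldiag}), I then set $\Delta_{2:r}(y_0):=-(I_{r-1}-\dot V)^{-1}\bigl(C_{p+1}(y_0)\bigr)_{2:r}$ and $\Delta_1:=0$; this annihilates the non-principal part of $\widetilde{C}_{p+1}$, leaving $\widetilde{C}_{p+1}=K_{p+1}e_1$. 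Relabelling $\widetilde{S}_h$ as $S_h$ gives exactly \eqref{eq:uerror}.

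The algebra is short, so the points needing care are the two facts underlying the expansion. The main obstacle is to justify rigorously that the $O(h^{p+1})$ modification is realizable by a genuine starting method, equivalently that in the formal B-series framework the order-$(p+1)$ coefficients of the starting method are free (adding $\delta$ leaves the consistent low-order part intact and, with enough stages in \eqref{eq:start}, any leading $\Delta$ is attainable), and that the nonlinear remainder is genuinely $O(h^{p+2})$ with constants uniform in small $h$; the latter follows from solving the stage equation \eqref{eq:stage} by the implicit function theorem for $\|A\|\|hf'\|$ small, giving $D\M_h=V+O(h)$. An equivalent tree-by-tree argument is also available: for each tree $t$ of order $p+1$ the non-principal defect in \eqref{eq:Ex2} equals $(I_{r-1}-\dot V)\xi(t)$ plus terms fixed by lower orders, so choosing the free coefficient $\xi(t)$ kills it. I prefer the perturbation form, since it treats all trees simultaneously and makes the invariance of the first component of $C_{p+1}$ transparent.
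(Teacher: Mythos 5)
Your proposal is correct and takes essentially the same approach as the paper: the paper's (very terse) proof replaces $\S_h$ by $\S_h+\delta_h$ with $(I-V)\delta_h=-(I-e_1e_1\tr)C_{p+1}(y_0)h^{p+1}$ and $e_1\tr\delta_h=0$, which in the canonical coordinates \eqref{eq:bldiag} is precisely your choice $\Delta_1=0$, $\Delta_{2:r}=-(I_{r-1}-\dot V)^{-1}\bigl(C_{p+1}\bigr)_{2:r}$. Your write-up simply supplies the expansion details (Fr\'echet derivative $V+O(h)$, quadratic remainder, invariance of the principal component) that the paper leaves implicit.
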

\begin{proof} Replace $\S_h$ satisfying \eqref{eq:order}, by $\S_h+\delta_h$ such that
\begin{equation*}
(I-V)\delta_h=-(I-e_1 e_1\tr)C_{p+1}(y_0)h^{p+1},\qquad e_1\tr \delta_h=0.
\end{equation*}
\end{proof}

\begin{theorem}\label{theo:evenloc}
Suppose that  $\M_h$  is a  symmetric consistent method and that $1$ is a simple eigenvalue of $V$. Then, $\M_h$ is of even order $p$, and there is a symmetric starting method $\S_h$ relative to which $\M_h$ is of order $p$. 
\end{theorem}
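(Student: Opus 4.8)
The plan is to prove the two assertions separately, treating the existence of the symmetric starting method as the easy part and the even-order statement as the substance. First I would dispose of the starting method. Since $\M_h$ has order $p$ relative to some $S_h$, Remark (ii) after Lemma \ref{lem:Suniq} lets me assume $\zeta(\ta)=0$ after replacing $S_h$ by $S_h E_{\theta h}$. Lemma \ref{lem:symmS} then produces $\S_h:=\tfrac12(S_h+LS_{-h})$, which satisfies $L\S_{-h}=\S_h$ (using $L^2=I$) and relative to which $\M_h$ is still of order $p$. This settles the existence claim, and the rest of the argument uses precisely this symmetric $\S_h$.

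The key preliminary is to pin down how $L$ and $V^{-1}$ act in the principal direction. Working in the canonical coordinates \eqref{eq:bldiag}, the \emph{simple} eigenvalue $1$ of $V$ has right and left eigenvectors $u=w=e_1$. The fourth quadrant of \eqref{eq:symm} gives $V=LV^{-1}L$, hence $LVL=V^{-1}$ and $LV^{-1}=VL$; in particular $V(Lu)=(VL)u=(LV^{-1})u=Lu$, so $Lu$ is a $1$-eigenvector of $V$ and, by simplicity, $Lu=\lambda u$ with $\lambda^2=1$. To fix the sign I would combine consistency ($Uu=\1$) with the symmetry relation $U=PULV$ from \eqref{eq:symm4}: applying the latter to $u$ gives $\1=Uu=\lambda PUu=\lambda\1$, so $\lambda=1$, i.e. $Le_1=e_1$. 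Because $1$ is simple, the corresponding block $L_0$ of $L$ is the $1\times1$ scalar $1$, so $L$ decouples on this coordinate and also $e_1\tr L=e_1\tr$. Consequently $e_1\tr LV^{-1}=e_1\tr VL=e_1\tr L=e_1\tr$.

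For the even-order conclusion I would feed the symmetric $\S_h$ into identity \eqref{eq:LS}. Since $L\S_{-h}=\S_h$, its left-hand side becomes $\M_h\S_h y_0$ and the first right-hand term becomes $\S_h E_h y_0$, so comparison with \eqref{eq:order} collapses \eqref{eq:LS} to the clean relation $C_{p+1}=(-1)^p LV^{-1}C_{p+1}$. Projecting onto the principal direction with $e_1\tr$ and using $e_1\tr LV^{-1}=e_1\tr$ reduces this to $K_{p+1}=(-1)^p K_{p+1}$, where $K_{p+1}:=e_1\tr C_{p+1}$. If $p$ were odd this forces $K_{p+1}=0$; Lemma \ref{lem:uerror} then supplies a starting method whose entire order-$(p+1)$ error is $K_{p+1}e_1=0$, so $\M_h$ would have order at least $p+1$, contradicting the maximality of $p$. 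Hence $p$ is even. The step I expect to be the main obstacle is the sign determination $Lu=u$: the downstream projection is a one-liner, but pinning $\lambda=1$ genuinely requires marrying consistency to the symmetry relations and the simplicity of the eigenvalue $1$, and I would need to be careful tracking the sign of $(-h)^{p+1}$ through \eqref{eq:LS}.
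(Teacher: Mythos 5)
Your proposal is correct and follows essentially the same route as the paper: Lemma \ref{lem:symmS} supplies the symmetric $\S_h$, identity \eqref{eq:LS} compared with \eqref{eq:order} yields $C_{p+1}=(-1)^{p}LV^{-1}C_{p+1}$, and projection onto the principal direction together with Lemma \ref{lem:uerror} and the maximality of $p$ forces $p$ to be even. The only (harmless) organizational differences are that you derive $Le_1=e_1$ and $e_1\tr LV^{-1}=e_1\tr$ directly from \eqref{eq:symm4} and consistency rather than citing Subsection \ref{subs:V}, and you invoke Lemma \ref{lem:uerror} at the end to manufacture a contradiction when $K_{p+1}=0$, whereas the paper applies it upfront to normalize $C_{p+1}=K_{p+1}e_1$ and then argues $K_{p+1}\neq 0$.
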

\begin{proof} Since $\M_h$ is consistent, it is of order $p$, for some $p\in \N$. Lemma \ref{lem:symmS} ensures the existence of a symmetric starting method $\S_h$ relative to which $\M_h$ is of order $p$.  Since $L\S_{-h}=\S_h$, identities \eqref{eq:order}  and \eqref{eq:LS} are the same for this $\S_h$. Equating the terms of order $h^{p+1}$, we obtain 
\begin{equation*}
C_{p+1}(y_0)=(-1)^{p}LV^{-1}C_{p+1}(y_0).
\end{equation*}
By Lemma \ref{lem:uerror}, $C_{p+1}(y_0)=K_{p+1}(y_0)e_1$. As $\M_h$ is of order $p$, the term $K_{p+1}(y_0)$ is non--zero. From Subsection \ref{subs:V}, $LV^{-1}e_1=e_1$. Thus,
\begin{equation*}
K_{p+1}(y_0)=e_1\tr C_{p+1}(y_0)=(-1)^{p}e_1\tr LV^{-1}C_{p+1}(y_0)=(-1)^{p}K_{p+1}(y_0).
\end{equation*}
Hence, $p$ is even.
\end{proof}

\begin{lemma}\label{lem:fin}
Suppose that $1$ is a simple eigenvalue of $V$ and that $\M_h$ is of order $p$ relative to $\S_h$. Then, there is a finishing method $\F_h$ such that $\F_h\S_h=I$. If $\S_h=L\S_{-h}$, then $\F_h$ may be chosen to be symmetric; i.e. $\F_h=\F_{-h}L$.
\end{lemma}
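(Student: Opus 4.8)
The plan is to build $\F_h$ by inverting only the principal component of $\S_h$, and then to symmetrise. I work throughout in the canonical coordinates of Subsection~\ref{subs:V}, so that $V,B,U$ take the block form~\eqref{eq:bldiag}. Since $1$ is a simple eigenvalue, consistency forces $u=e_1$, so $\S_h$ outputs $e_1y_0$ at $h=0$; hence the principal component map $\varphi_h:=e_1\tr\S_h\colon X\to X$ is represented by a B-series $\zeta$ with $\zeta(\emptyset)=1$ (the non-principal components $\xi$ have $\xi(\emptyset)=0$). A near-identity B-series is invertible in the Butcher group (see~\cite{hnw}), so $\varphi_h$ has a composition inverse $\varphi_h^{-1}\colon X\to X$, again a B-series map. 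I would then set
\begin{equation*}
\F_h:=\varphi_h^{-1}\circ(e_1\tr\,\cdot\,),\qquad\text{i.e.}\qquad \F_hz=\varphi_h^{-1}(e_1\tr z),
\end{equation*}
which is a map $X^r\to X$ satisfying $\F_h\S_h=\varphi_h^{-1}\circ\varphi_h=I_X$, proving the first claim. The construction is uniform in the step size, so $\F_{-h}\S_{-h}=I_X$ holds too.

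For the symmetry claim I first record two facts about $L$. Because $1$ is simple and $Ve_1=e_1$, the relation $LV^{-1}e_1=e_1$ from Subsection~\ref{subs:V} (already used in Theorem~\ref{theo:evenloc}) gives $Le_1=e_1$, and the block-diagonality of $L$ then gives also $e_1\tr L=e_1\tr$. Rewriting the hypothesis $\S_h=L\S_{-h}$ of~\eqref{eq:startminus} as $\S_{-h}=L\S_h$, I would symmetrise by setting
\begin{equation*}
\widehat\F_h:=\tfrac12\bigl(\F_h+\F_{-h}L\bigr).
\end{equation*}
Then $L\S_h=\S_{-h}$ together with $\F_{-h}\S_{-h}=I_X$ yields $\widehat\F_h\S_h=\tfrac12(\F_h\S_h+\F_{-h}\S_{-h})=I_X$, so $\widehat\F_h$ is still a finishing method, while $\widehat\F_{-h}L=\tfrac12(\F_{-h}L+\F_hL^2)=\tfrac12(\F_{-h}L+\F_h)=\widehat\F_h$, which is exactly the required symmetry $\F_h=\F_{-h}L$. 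Replacing $\F_h$ by $\widehat\F_h$ finishes the proof.

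As a cross-check, one can see that the original $\F_h$ is already symmetric: $e_1\tr L=e_1\tr$ turns $\S_h=L\S_{-h}$ into $\varphi_h=\varphi_{-h}$, so $\zeta$ is an even B-series ($\zeta(t)=0$ for $|t|$ odd); the composition inverse of an even near-identity B-series is again even by uniqueness of inverses, whence $\varphi_h^{-1}=\varphi_{-h}^{-1}$ and $\F_{-h}L=\F_h$ directly. I expect the work to be bookkeeping rather than conceptual, with three points needing care: pinning down $u=e_1$ (hence $\zeta(\emptyset)=1$) so that the composition inverse exists; checking that the objects assembled from projection, composition inverse, and the average $\tfrac12(\F_h+\F_{-h}L)$ all count as finishing methods in the sense of Section~\ref{sec:GLM}, with $\F_h\S_h=I$ read as a formal B-series identity to all orders; and deriving both $Le_1=e_1$ and $e_1\tr L=e_1\tr$ from the block structure of $L$. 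The symmetrisation step is the robust one, since it uses only $\S_{-h}=L\S_h$, $L^2=I$ and uniformity of the left inverse in $h$, avoiding any parity analysis of $\varphi_h^{-1}$.
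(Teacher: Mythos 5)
Your proposal is correct and takes essentially the same route as the paper: both construct the finishing method as $\F_h=\zeta^{-1}e_1\tr$ by inverting the principal B-series in the Butcher group, and your ``cross-check'' paragraph is exactly the paper's symmetry argument ($Le_1=e_1$, $e_1\tr L=e_1\tr$ force $\zeta$ to be the same for $\S_h$ and $\S_{-h}$, hence $\F_{-h}L=\F_h$). The additional averaging step $\widehat\F_h=\tfrac12\bigl(\F_h+\F_{-h}L\bigr)$ is valid but unnecessary, since the $\F_h$ you constructed is already symmetric.
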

\begin{proof} Without loss of generality, we assume that $\S_h$ is represented as in Lemma \ref{lem:Suniq}. Since $\zeta(\emptyset)=1$, there exists an inverse B-series $\zeta^{-1}$, \cite[II.12]{hnw}. We observe that $\F_h:=\zeta^{-1}e_{1}\tr$ is a suitable finishing method. Now suppose $\S_h$ is symmetric, and recall from Subsection \ref{subs:V} that $Le_1=e_1$ and $e_1\tr L=e_1\tr$. Thus, $L\S_{-h}=\S_h$ implies that $ \zeta$ is the same for $\S_h$ and $\S_{-h}$. Hence,  $\F_{-h}L=\zeta^{-1}e_{1}\tr L=\zeta^{-1}e_{1}\tr=\F_h.$
\end{proof}
 
\begin{theorem}\label{theo:evenglob}
Suppose that $\M_h$  is symmetric and of order $p$. Then,  it is of order $p$ relative to a symmetric starting method $\S_h$, with corresponding symmetric finishing method $\F_h$.  Furthermore, the  error for initial data $y_0\in X$ at $x=nh$ is given by
\begin{equation}
E_{nh}y_0-\F_h \M_h^n \S_h y_0=h^{p} c_{p+1}(y_0,\,nh)+h^{p+1} c_{p+2}(y_0,\,nh)+\cdots,\label{eq:globgen}
\end{equation}
where $p$ is even and only even powers of $h$ appear on the right--hand side of \eqref{eq:globgen}.
\end{theorem}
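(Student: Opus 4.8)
The plan is to read off the starting- and finishing-method assertions from the preceding results, and then to reduce the even-power statement to a single algebraic identity obtained by pushing the symmetry relations through the composite map $\F_h\M_h^n\S_h\colon X\to X$.

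First I would settle the structural claims. Because $\M_h$ is symmetric and consistent (with $1$ a simple eigenvalue of $V$, as assumed throughout this subsection), Theorem~\ref{theo:evenloc} provides a symmetric starting method $\S_h$, so that $\S_h=L\S_{-h}$, relative to which $\M_h$ keeps its order $p$, and at the same time forces $p$ to be even. Lemma~\ref{lem:fin} then yields a finishing method with $\F_h\S_h=I$ which, since $\S_h$ is symmetric, may be taken symmetric as well: $\F_h=\F_{-h}L$. This disposes of the first sentence of the theorem and of the parity of $p$.

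The core of the argument is the identity
\[
\F_h\M_h^n\S_h=\F_{-h}\M_{-h}^{-n}\S_{-h}.
\]
To prove it I would begin from the map symmetry $\M_h=L\M_{-h}^{-1}L$ with $L^2=I$; inserting $L^2=I$ between consecutive factors telescopes $\M_h^n$ to $L\M_{-h}^{-n}L$. Using $\F_hL=\F_{-h}$ (from $\F_h=\F_{-h}L$) and $L\S_h=\S_{-h}$ (from $\S_h=L\S_{-h}$) then collapses the three factors to the right-hand side. The payoff is its reading at a fixed grid point $x=nh$: replacing $h$ by $-h$ sends $n$ to $-n$, so the right-hand side is exactly the numerical approximation of $E_xy_0$ produced by the same scheme run with step $-h$. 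Hence the numerical solution at fixed $x$ is an even function of $h$; since $E_{nh}y_0$ is independent of $h$, the global error $E_{nh}y_0-\F_h\M_h^n\S_h y_0$ is even in $h$, so every odd-power coefficient in its expansion vanishes, leaving the even series claimed, with leading term $h^p$ and $p$ even.

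The step I expect to be the genuine obstacle is not this symmetry calculation but the prior existence of an asymptotic expansion $E_{nh}y_0-\F_h\M_h^n\S_hy_0=\sum_{k\ge p}h^k c_{k+1}(y_0,nh)$ whose coefficients depend smoothly on the accumulated time $nh$. For a multivalue method this is more delicate than in the one-step setting, since one must control the non-principal components over $n=O(1/h)$ steps and check that the start and finish maps mediate cleanly between $X$ and $X^r$. The natural route is to invoke the formal underlying one-step pair $(\S_h,\Phi_h)$ of \eqref{eq:uosmi}, which gives $\F_h\M_h^n\S_h=\F_h\S_h\Phi_h^n=\Phi_h^n$; as $\Phi_h$ is a smooth one-step perturbation of the flow $E_h$, the classical global error expansion applies, and the evenness established above is precisely the standard fact that a formally time-symmetric one-step method has an error series in even powers of $h$.
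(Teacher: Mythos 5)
Your proof is correct and takes essentially the same route as the paper: symmetric $\S_h$ and $\F_h$ come from Theorem~\ref{theo:evenloc} and Lemma~\ref{lem:fin}, and your telescoped identity $\F_h\M_h^n\S_h=\F_{-h}\M_{-h}^{-n}\S_{-h}$ is exactly the paper's computation showing $\mbox{err}(-n)=\mbox{err}(n)$ at fixed $x=nh$, giving evenness of the global error in $h$. Your closing paragraph on justifying the existence of the expansion via the underlying one--step pair $(\S_h,\Phi_h)$ addresses a point the paper passes over silently (it simply asserts the formal expansion), so it is a reasonable supplement rather than a different argument.
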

\begin{proof}
The existence of suitable $\S_h$ and $\F_h$ is shown in Theorem \ref{theo:evenloc} and Lemma \ref{lem:fin}.
Let $y_0\in X$ and $x\in\R\setminus\{0\}$ be fixed. Given $n\in \Z\setminus\{0\}$,   define $\mbox{err}(n):=E_x-\F_{ x/n}\M_{ x/n}^{n}\S_{ x/n}.$ Transforming $n\longleftrightarrow -n$, and using the symmetry  of $\M_{h},\S_h$ and $\F_h$, we obtain
 \begin{align*}
\mbox{err}(-n)&=E_x-\F_{- x/n}\M_{- x/n}^{-n}\S_{- x/n}\\
&=E_x-\F_{- x/n}L(L\M_{- x/n}^{-1}L)^n  L\S_{- x/n}\\
&=E_x-\F_{ x/n}\M_{ x/n}^{n}\S_{ x/n}=\mbox{err}(n).
\end{align*}
Thus, $\mbox{err}(n)$ is an even function of $n$. Hence, the expansion
\begin{equation*}
\mbox{err}(n)= ( x/n)^{p} c_{p+1}(y_0,\, x)+( x/n)^{p+2} c_{p+3}(y_0,\, x)+\cdots,
\end{equation*}
may only contain even powers of $n$. Putting  $h=x/n$, we deduce that only even powers of $h$ have non--zero coefficients in \eqref{eq:globgen}.
\end{proof}

\subsection{The underlying one--step method}

Given a method $\M_h$, the map $\Phi_h:X\longrightarrow X$ is an underlying one--step method (UOSM) for $\M_h$ if there is a map $\S_h:X\longrightarrow X^r$ such that
\begin{equation}
\S_h\Phi_h y_0=\M_h\S_h y_0,\qquad y_0\in X.\label{eq:OSM}
\end{equation}
Relation \eqref{eq:OSM} may be represented by a commutative diagram as in Figure
\ref{fig:comm}.

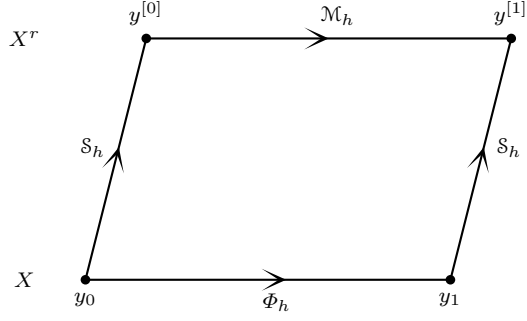
\begin{figure}
\begin{center}
\psset{xunit=8mm,yunit=8mm}
\begin{pspicture}(-2,-0.25)(8,4,5)
\psline{*-*}(0,0)(6,0)
\psline(0,0)(1,4)
\psline(6,0)(7,4)
\psline{*-*}(1,4)(7,4)
\psset{linestyle=none,arrowsize=6pt 3,arrowinset=0.7,arrowlength=1.1}
\psline{->}(0,0)(3.3,0)
\psline{->}(0,0)(0.55,2.2)
\psline{->}(6,0)(6.55,2.2)
\psline{->}(1,4)(4,4)

\uput[-90](0,0){$y_0$}
\uput[-90](6,0){$y_1$}
\uput[90](1,4){$y^{[0]}$}
\uput[90](7,4){$y^{[1]}$}
\uput[-90](3.15,0){$\Phi_h$}
\uput[90](4.15,4){$\M_h$}
\uput[180](0.55,2.2){$\S_h$}
\uput[0](6.55,2.2){$\S_h$}

\rput(-1,0){$X$}
\rput(-1,4){$X^r$}

\end{pspicture}
\end{center}
\caption{Commutative diagram for underlying one-step method}\label{fig:comm}
\end{figure}

The concept of an underlying one--step method  in the linear multistep case is due to Kirchgraber \cite{k86}. The existence  of an underlying one--step method was extended to strictly stable general linear methods and made precise by Stoffer \cite{st93}. For the broader class of zero-stable methods,   the existence and uniqueness  of a formal B-series for $\Phi_h$ and $\S_h$ was
shown in \cite{hlw}.

Because  $\Psi_h=I_{X}+O(h)$,  $\Psi_h:X\longrightarrow X$ is invertible and $\Psi_h^{-1}\Phi_h \Psi_h$ is also a UOSM for $\M_h$:
\begin{equation}
(\S_h \Psi_h)(\Psi_h^{-1}\Phi_h \Psi_h)y_0=\M_h(\S_h \Psi_h) y_0,\qquad y_0\in X.\label{eq:OSMconj}
\end{equation}
This freedom in $\S_h$ and $\Phi_h$ might be restricted in several ways. In \cite{hlw} this is achieved by choosing a finishing method $\F_h:X^r\longrightarrow X$ in advance, and enforcing the  finishing condition
\begin{equation}
\F_h\S_h=I_X.\label{eq:fsi}
\end{equation}
Here, we prefer to specify $\zeta$, the B-series of the first component of $\S_h$. 

Below, we use the notation defined for  Lemma \ref{lem:Suniq}, and define  B-series $\varphi$ and  $[\zeta,\,\xi]$ to represent $\Phi_h$ and $\S_h$ respectively.  Equation \eqref{eq:OSM} now implies the 
tree identities
\begin{align}
\eta(t) &= A(\eta D)(t) +\1 \zeta(t) + \dot U\xi(t),\label{eq:eta3}\\
(\phi\zeta)(t) &=  b\tr (\eta D)(t) + \zeta(t),\label{eq:Ez3}\\
(\phi\xi)(t) &= \dot B(\eta D)(t) + \dot V\xi(t),\label{eq:Ex3}
\end{align}
for a B-series $\eta$ representing the stage values.

\begin{theorem}\label{theo:OSM}
Let $\M_h$ be a consistent zero--stable general linear method, such that the method may be written in the form \eqref{eq:bldiag} with $1$  a simple eigenvalue of $V$. If  $\zeta$ is chosen such that $\zeta(\emptyset)=1$, then there exist unique $\S_h$ and $\Phi_h$ formally satisfying  \eqref{eq:OSM}. 
\end{theorem}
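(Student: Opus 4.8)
The plan is to determine the coefficients of the formal B-series for $\Phi_h$ and $\S_h$ recursively, order by order, treating $\zeta$ (the prescribed first component of $\S_h$, with $\zeta(\emptyset)=1$) as known data and regarding the scalar series $\phi$, the vector series $\xi$ of non-principal components, and the stage series $\eta$ as the unknowns. I would prove by induction on $q$ that the identities \eqref{eq:eta3}--\eqref{eq:Ex3} determine $\phi(t)$, $\xi(t)$ and $\eta(t)$ uniquely for every rooted tree $t$ with $|t|\le q$; existence and uniqueness of the pair $(\S_h,\Phi_h)$ as formal series then follow at once, since the recursion has a unique solution at each stage.

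The engine of the induction is the structure of the composition products on the left-hand sides. By the composition rule for B-series, for a tree $t$ of order $q$ the only order-$q$ contributions are the whole-tree splittings, so that $(\phi\zeta)(t)=\zeta(\emptyset)\,\phi(t)+\zeta(t)+R_\zeta(t)$ and $(\phi\xi)(t)=\xi(t)+\xi(\emptyset)\,\phi(t)+R_\xi(t)$, where the remainders $R_\zeta(t)$ and $R_\xi(t)$ are polynomials in the values of $\phi,\zeta,\xi$ on trees of order strictly less than $q$. Likewise, as already recorded in the proof of Lemma \ref{lem:Suniq}, the stage-derivative series $(\eta D)(t)$ depends only on $\eta$ evaluated on proper subtrees of $t$, hence on orders strictly below $q$. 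Consequently, at order $q$ every right-hand side is known apart from its explicit linear dependence on the order-$q$ unknowns, and the three relations become a triangular system.

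Granting the inductive hypothesis through order $q-1$, I would solve at order $q$ in the following sequence. First, since $\zeta(\emptyset)=1$, relation \eqref{eq:Ez3} collapses to $\phi(t)=b\tr(\eta D)(t)-R_\zeta(t)$, which fixes the scalar $\phi(t)$ outright; this is exactly where the hypothesis $\zeta(\emptyset)=1$ is used, as it guarantees a unit coefficient for $\phi(t)$. Next, with $\phi(t)$ now known, relation \eqref{eq:Ex3} becomes $(I_{r-1}-\dot V)\xi(t)=\dot B(\eta D)(t)-\xi(\emptyset)\phi(t)-R_\xi(t)$, and because $1$ is a simple eigenvalue of $V$ the matrix $I_{r-1}-\dot V$ in \eqref{eq:bldiag} is non-singular, so $\xi(t)$ is uniquely determined. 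Finally \eqref{eq:eta3} yields $\eta(t)=A(\eta D)(t)+\1\zeta(t)+\dot U\xi(t)$ explicitly. The base case $q=0$, together with the normalisations $\phi(\emptyset)=1$ and $\phi(\tau)=1$, is supplied by the preconsistency and consistency conditions of Definition \ref{defn:cons} and the choice $\zeta(\emptyset)=1$.

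The main obstacle is the accurate bookkeeping of the composition rule: one must verify carefully that for each tree the only genuinely new order-$q$ unknown entering \eqref{eq:Ez3} is $\phi(t)$, and that the only one entering \eqref{eq:Ex3}, once $\phi(t)$ is fixed, is $\xi(t)$, with all cross terms confined to strictly lower orders. In contrast to Lemma \ref{lem:Suniq}, where the exact flow $E$ is known and $\zeta(t)$ is extracted via the $t\tau$ substitution, here $\phi$ is itself unknown but is read off directly from \eqref{eq:Ez3}. Once this triangular structure is established, solvability and uniqueness are immediate consequences of the two non-degeneracy facts, namely the unit coefficient $\zeta(\emptyset)=1$ and the invertibility of $I_{r-1}-\dot V$.
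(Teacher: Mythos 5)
Your proposal is correct and takes essentially the same approach as the paper: induction on tree order, treating \eqref{eq:eta3}--\eqref{eq:Ex3} as a triangular system at each order, with uniqueness resting on the unit coefficient $\zeta(\emptyset)=1$ and the invertibility of $I_{r-1}-\dot V$ guaranteed by $1$ being a simple eigenvalue of $V$. The only (immaterial) difference is the ordering within each inductive step: you fix $\phi(t)$, then $\xi(t)$, then $\eta(t)$, while the paper fixes $\xi(t)$, then $\eta(t)$, then $\varphi(t)$; both orderings work because neither \eqref{eq:Ez3} nor \eqref{eq:Ex3} couples the two order-$|t|$ unknowns once $\xi(\emptyset)=0$ is established in the base case.
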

\begin{proof}
For $k=0$, (\ref{eq:eta3}, \ref{eq:Ez3}) and \eqref{eq:Ex3} imply that $\xi(\emptyset)=0$, $\eta(\emptyset)=\1$ and $\varphi(\emptyset)=1$.  For $k\in \N$,  assume that (\ref{eq:eta3}, \ref{eq:Ez3}) and \eqref{eq:Ex3} hold for $|t|\leq k-1$. For $|t|=k$,  $\xi(t)$, $\eta(t)$ and $\phi(t)$ are successively  fixed by the following uniquely soluble rearrangements of (\ref{eq:Ex3}, \ref{eq:eta3}) and \eqref{eq:Ez3}:
\begin{align*}
\xi(t) &= (I-\dot V)^{-1}\big( B (\eta D)(t) + \xi(t) - (\varphi  \xi)(t)     \big),\\
\eta(t) &= A(\eta D)(t) +\1 \zeta(t) + \dot U\xi(t),\\
\varphi(t) &= b\tr(\eta D)(t) +(\varphi(t) +\zeta(t)- (\varphi \zeta)(t)).
\end{align*}
We observe that the terms on the right-hand side of the first and third equations depend only on the given value of $\zeta(t)$ and on  trees of   order less than $k$.  Once $\xi(t)$ is found, the second equation fixes $\eta(t)$. Induction on $k$ now implies the existence of suitable $\xi,\eta$ and $\varphi$. Hence, there exist formal series for $\Phi_h$ and $\S_h$ satisfying identity \eqref{eq:OSM}. 
\end{proof}

\noindent
{\bf Remark:} If $\zeta$ is chosen equal to the first component of the practical starting $\S_h$ found in Lemma \ref{lem:Suniq}, then $E_h$ is a solution of (\ref{eq:Ez3}, \ref{eq:Ex3}) up to $O(h^{p})$.  In that case, we deduce that the corresponding one-step method satisfies
\begin{equation}
E_h y_0-\Phi_h y_0=c_{p+1}(y_0)h^{p+1}+O(h^{p+2}).\label{eq:OSMapprox}
\end{equation}

\begin{corollary} 
Let the  assumptions of Theorem \ref{theo:OSM} hold for a symmetric method $\M_h$, and suppose that
\begin{equation}
\zeta(t)=0,\qquad |t|\;\;\;\mbox{\em odd.}\label{eq:zeven}
\end{equation}
Let $\Phi_h$ and $\S_h=S^{(0)}+hS^{(1)}+\ldots$  denote the corresponding underlying one--step and starting methods. Then, $\Phi_h$ and $\S_h$ are symmetric:
\begin{equation}
 \S_{h}=L\S_{-h},\qquad  \Phi_h= \Phi_{-h}^{-1},\label{eq:OSMsymm}
\end{equation}
where $\Phi_{-h}^{-1}$ denotes a formal inverse, and 
\begin{equation}
\S^{(2q)}=L\S^{(2q)},\quad \S^{(2q+1)}=-L\S^{(2q+1)},\qquad \mbox{\em for }\qquad q=0,\,1,\,2,\,\ldots\label{eq:Ssymm}
\end{equation}
Furthermore, \eqref{eq:OSMapprox} holds with $p$ even.
\end{corollary}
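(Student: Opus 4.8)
The plan is to establish the three symmetry properties in sequence, using uniqueness from Theorem \ref{theo:OSM} as the engine. First I would show $\Phi_h=\Phi_{-h}^{-1}$ and $\S_h=L\S_{-h}$ jointly. The idea is that both $(\S_h,\Phi_h)$ and a reflected pair must solve the defining identity \eqref{eq:OSM}, so by the uniqueness in Theorem \ref{theo:OSM} they coincide. Concretely, starting from $\S_h\Phi_h=\M_h\S_h$, I would left-multiply by $L$, insert $L^2=I$, and use symmetry $\M_h=L\M_{-h}^{-1}L$ to rewrite the relation in terms of $\M_{-h}$. Replacing $h$ by $-h$ and taking formal inverses should produce $(L\S_{-h})\,\Phi_{-h}^{-1}=\M_h\,(L\S_{-h})$. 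Thus $L\S_{-h}$ is a starting method whose underlying one-step map is $\Phi_{-h}^{-1}$, both satisfying \eqref{eq:OSM} with $\M_h$.

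The crucial point — and the main obstacle — is that the uniqueness in Theorem \ref{theo:OSM} is conditioned on the normalization of $\zeta$, the first B-series component. So to conclude $\S_h=L\S_{-h}$ and $\Phi_h=\Phi_{-h}^{-1}$ I must check that the reflected starting method $L\S_{-h}$ has the \emph{same} first component $\zeta$ as $\S_h$. This is exactly where the hypothesis \eqref{eq:zeven}, that $\zeta(t)=0$ for $|t|$ odd, is used. Since $Le_1=e_1$ and $e_1\tr L=e_1\tr$ (recorded in Subsection \ref{subs:V} and used in Lemma \ref{lem:fin}), the first component of $L\S_{-h}$ is just the first component of $\S_{-h}$, namely $\zeta$ evaluated at $-h$. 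Writing $\zeta=\sum_q \zeta^{(q)}h^q$, reversing $h$ flips the sign of the odd-order terms; but \eqref{eq:zeven} kills precisely those, so the first component is invariant under $h\mapsto -h$. Hence the normalization matches and uniqueness applies, giving \eqref{eq:OSMsymm}.

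Next I would derive the componentwise parity \eqref{eq:Ssymm} from $\S_h=L\S_{-h}$. Expanding $\S_h=\sum_q \S^{(q)}h^q$ and equating powers of $h$ in the identity $\sum_q \S^{(q)}h^q=L\sum_q \S^{(q)}(-h)^q=\sum_q(-1)^q L\S^{(q)}h^q$ immediately yields $\S^{(q)}=(-1)^q L\S^{(q)}$, which is exactly \eqref{eq:Ssymm} split into even and odd $q$. This step is purely formal bookkeeping once \eqref{eq:OSMsymm} is in hand.

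Finally, for the claim that \eqref{eq:OSMapprox} holds with $p$ even, I would invoke Theorem \ref{theo:evenloc}: a symmetric consistent method with $1$ a simple eigenvalue of $V$ is of even order. Since the hypotheses of Theorem \ref{theo:OSM} include that $1$ is a simple eigenvalue of $V$, and the method is symmetric and consistent, Theorem \ref{theo:evenloc} applies. The Remark following Theorem \ref{theo:OSM} already records that \eqref{eq:OSMapprox} holds when $\zeta$ is chosen as the first component of the order-$p$ starting method; one should check that the $\zeta$ normalized by \eqref{eq:zeven} agrees with that choice up to the relevant order, which follows from the essential uniqueness of the symmetric starting method in Lemma \ref{lem:symmS} together with the construction in Lemma \ref{lem:Suniq}. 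The even parity of $p$ then comes directly from Theorem \ref{theo:evenloc}, completing the corollary.
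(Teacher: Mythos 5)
Your treatment of the two symmetry identities \eqref{eq:OSMsymm} and of the parity relations \eqref{eq:Ssymm} is correct and follows the paper's own route: like the paper, you show that $(L\S_{-h},\,\Phi_{-h}^{-1})$ satisfies \eqref{eq:OSM} with $\M_h$, observe that $e_1\tr L=e_1\tr$ together with \eqref{eq:zeven} makes the first component of $L\S_{-h}$ equal to $\zeta$, and then invoke the uniqueness in Theorem \ref{theo:OSM}; the relations \eqref{eq:Ssymm} are the same coefficient comparison in both cases.

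The gap is in the final claim. You deduce ``$p$ even'' from Theorem \ref{theo:evenloc}, i.e.\ you take $p$ to be the order of $\M_h$, and you then need \eqref{eq:OSMapprox} to hold with that same $p$; for this you assert that the $\zeta$ of the corollary ``agrees up to the relevant order'' with the first component of the practical order-$p$ starting method, citing Lemmas \ref{lem:symmS} and \ref{lem:Suniq}. That step fails. Hypothesis \eqref{eq:zeven} kills only the odd-order coefficients of $\zeta$; its even-order coefficients are completely free, so $\zeta$ may already differ from the practical starting method's first component on the single tree of order $2$. Lemma \ref{lem:Suniq} cannot repair this: its uniqueness statement applies only to two starting methods relative to which $\M_h$ has order $p$ in the sense of \eqref{eq:order}, whereas the $\S_h$ of the corollary is merely a formal solution of \eqref{eq:OSM} and need not be an order-$p$ starting method at all. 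Consequently the underlying one-step method built from an arbitrary admissible $\zeta$ may approximate $E_h$ to an order strictly below the order of $\M_h$ (conjugation by $\Psi_h=I+O(h^2)$ in \eqref{eq:OSMconj} can reduce the approximation order to $2$), so your identification of the two exponents breaks down. The paper avoids this entirely: it proves the last claim from the symmetry $\Phi_h=\Phi_{-h}^{-1}$ just established, by substituting $y_{-1}=E_{-h}y_0$ into \eqref{eq:OSMapprox}, left-multiplying by $\Phi_{-h}$, and reversing the sign of $h$, which yields
\begin{equation*}
E_h y_0-\Phi_h y_0=(-1)^{p}c_{p+1}(y_0)h^{p+1}+O(h^{p+2}),
\end{equation*}
so that comparison with \eqref{eq:OSMapprox} forces $c_{p+1}=(-1)^{p}c_{p+1}$, hence $p$ even --- whatever the leading error order of $\Phi_h$ happens to be, and with no matching of $\zeta$ against a practical starting method required. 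You should replace the last paragraph of your argument with this direct computation.
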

\noindent
\begin{proof}
Let $(\S_h,\,\Phi_h)$ be as in the conclusion of Theorem \ref{theo:OSM}. Let $h$ be replaced by $-h$ in \eqref{eq:OSM} and let $y_0$ be replaced by $\Phi_{-h}^{-1} y_0$. A left--multiplication by $L$ then yields
\begin{equation*}
(L\M_{-h} L)(L\S_{-h})\Phi_{-h}^{-1} y_0=(L\S_{-h})y_0,
\end{equation*}
(where all identities hold as formal B-series). Left--multiplication  by $\M_h$  implies that
\begin{equation*}
(L\S_{-h})\Phi_{-h}^{-1} y_0=\M_{h}(L\S_{-h})y_0.
\end{equation*}
Hence, $(L\S_{-h},\,\Phi_{-h}^{-1})$ also satisfy \eqref{eq:OSM}. Now, by virtue of  \eqref{eq:zeven} and $Le_1=e_1$, the B-series for the first component of $L\S_{-h}$ is equal to $\zeta$, the first component  of $\S_h$.  Thus, Theorem \ref{theo:OSM} implies that $(L\S_{-h},\,\Phi_{-h}^{-1})=(\S_h,\,\Phi_h)$, and we deduce \eqref{eq:OSMsymm}. Identities \eqref{eq:Ssymm} follow from a comparison of the coefficients of $h^{2q}$ and $h^{2q+1}$ in the expansions of $\S_h$ and $L\S_{-h}$.

Substitute $y_{-1}=E_{-h}y_0$ for $y_0$ in \eqref{eq:OSMapprox} and left--multiply by $ \Phi_{-h}$. Then, 
\begin{align*}
 \Phi_{-h}y_0= \Phi_{-h}E_h y_{-1}&= \Phi_{-h}\left( \Phi_h y_{-1}+c_{p+1}(E_{-h}y_0)h^{p+1}+O(h^{p+2})\right)\\
\qquad &=E_{-h}y_{0}+c_{p+1}(y_0)h^{p+1}+O(h^{p+2}),
\end{align*}
where $\Phi_{-h}=I_{X}+O(h)$. Under the transformation $h\longleftrightarrow -h$,  we obtain 
\begin{equation*}
E_h y_0-\Phi_h y_0=(-1)^{p}c_{p+1}(y_0)h^{p+1}+O(h^{p+2}).
\end{equation*}
A comparison with \eqref{eq:OSMapprox} reveals that $p$ is even.
\end{proof}

\section{Examples of symmetric non--parasitic methods}\label{sec:examp}
In this section we  construct a number of symmetric methods,
each of which is consistent and  free of parasitism.  Because we will consider only methods
for which $r=2$ and $V=\diag(1,-1)$, the parasitism there is only a single parasitism  growth factor, equal to $-e_2\tr BU e_2$,
\cite{bhhn}.  Parasitism growth rates are also discussed in \cite{hlw}.
   For convenience, we 
select methods for which $A$ is lower triangular, preferably with
some zero elements on the diagonal.  Many of the methods  
have $r=2$ with $V=\diag(1,-1)$, and some are
G-symplectic.  For this choice of $V$, the two options
$L=\diag(1,1)$ and $L=\diag(1,-1)$ are possible and examples will
be given for each of these.  The terminology $pqrs=4123$ indicates that there are $r=2$ and $s=3$, with order  $p=4$ and stage-order $q=1$. Note that an irreducible method with $rs=22$ can never be free of parasitism because for such a method, $b_{22}=\pm b_{21}$ and $u_{22}=\pm u_{12}$  and hence the $(2,2)$ element of $BU$
equals $2b_{21}u_{12}$ and this can only be zero if the method is reducible.
Hence,  we will start our examples with $rs=23$.

\subsection{Starting and finishing methods}
We will present methods with $r=2$ and $L=\diag(1,\pm 1)$.  For $\pm=+$,
the principal input will be an even function and the second input will
be an odd function.  Suppose the B-series for these are defined by the
coefficient vectors $\xi_1$ and $\xi_2$, where
\begin{equation*}\psset{xunit=6mm,yunit=6mm,dotsize=3pt}
\xi_1(\,\ta\,) = \xi_1(\;\tc\;) = \xi_1(\;\td\;)=
\xi_2(\,\tb\,) = \xi_2(\;\,\te\;\,)=\xi_2(\;\tf\;)= \xi_2)(\;\tg\;)=\xi_2\big(\,\raisebox{-5pt}{\th}\,\big)=0,
\end{equation*}
then it will be sufficient to also specify the required values of $x=[x_1,x_2,x_3,x_4]$,
where
\begin{equation*}\psset{xunit=6mm,yunit=6mm,dotsize=3pt}
x_1=\xi_2(\,\ta\,), \quad x_2= \xi_1(\,\tb\,), \quad x_3 = \xi_2(\;\tc\;), \quad x_4= \xi_2(\;\td\;).
\end{equation*}
Note that the values of $\xi_1(t)$ where $|t|=4$ are irrelevant to the construction of
appropriate starting   values.
Consider the two Runge--Kutta methods
\begin{equation}
\begin{array}{c|c}
c&A\\
\hline
&b\tr
\end{array},\qquad \begin{array}{c|c}
\widehat c&\widehat A\\
\hline
&\widehat b\tr
\end{array}
:=
\begin{array}{c|c}
Pc-\1 b\tr \1&PAP-\1 b\tr P\\
\hline
&-b\tr P
\end{array},
\end{equation}
where $P$ is the stage reversing permutation matrix.  Note that the two Runge--Kutta
methods are exact inverses.  Hence if $R_h$ is the mapping associated with
$(A,b\tr, c)$, then $R_h^{-1}$ will be the mapping associated 
with $(\widehat A,\widehat b\tr, \widehat c)$.

Impose on the $(A,b\tr, c)$ method the order conditions
\begin{align*}
Cb\tr\1 &= x_1,\\
b\tr c &= x_2,\\
Cb\tr c^2 &= x_3,\\
Cb\tr A c &= x_4,
\end{align*}
where $C$ is a constant at our disposal.
Based on $R_h$ and $R_h^{-1}$, we will use a starting method $S_h$, defined by
\begin{align*}
y_1^{[0]} &= \tfrac12(R_hy_0+ R_{-h} y_0),\\
y_2^{[0]} &= \tfrac12 C(R_hy_0- R_{-h} y_0).
\end{align*}
Similarly, we will use a finishing method $F_h$, defined by
\begin{equation}
y_n = \tfrac12 \Big(\widehat R_h\big(y_1^{[n]} + \tfrac1C y_2^{[n]}\big)+ \widehat R_{-h}\big(y_1^{[n]} - \tfrac1C y_2^{[n]}\big)\Big).
\end{equation}
These proposed starting and finishing methods have the property that $F_h\circ S_h = \mbox{id}$ and that
they are consistent with the symmetry of the main method.

Starting methods will be presented in the form of 
\begin{equation}[C, (A,b\tr, c),(\widehat A,\widehat b\tr, \widehat c)]\label{eq:genstart}.
\end{equation}

\subsection{Methods with $rs=23$}\label{SS:23}
Because we will insist on consistent, irreducible, parasitism-free methods,
we will need to reject the case $L=\diag(1,1)$.  The reason for this is that
symmetry would require $b_{22}=0$, $b_{23}=-b_{21}$ and also
$u_{22}=0$, $u_{32}=-u_{12}$.  Hence, the parasitism growth factor
would be $\mu =-2b_{21} u_{12}$, and this would only be zero if either
$b_{21}=0$ or $u_{12}=0$.  However, in each of these cases, the method
reduces to a Runge--Kutta method.  However, methods exist with 
$L=\diag(1,-1)$ and the general case, assuming lower triangular $A$ is
given by 
\begin{equation*}
\left[
\begin{array}{ccc|cc}
a_{11}&0&0&1 & \m u_1\\
a_{21}&a_{22}&0& 1& \m u_2\\
a_{31}&a_{32}&a_{33}& 1 & \m u_1\\
\hline
b_1 & b_2 & b_1 & 1&\m0\\
\beta_1 & \beta_2 &\beta_1 &0 &-1
\end{array}
\right],
\end{equation*}
subject to 
\begin{align*}
2b_1+b_2 &=1,\\
2\beta_1 u_1 + \beta_2 u_2&=0,\\
a_{11} + a_{33}  &= b_1-\beta_1 u_1,\\
a_{21}&= b_1-\beta_1 u_2,\\
2a_{22} &=  b_2-\beta_2 u_2,\\
 a_{31} &= b_1-\beta_1 u_1,\\
a_{32} &= b_2-\beta_2 u_1.
\end{align*}
By consistency, the methods in this family have order $1$ and therefore, by Theorem
\ref{theo:evenglob}, the order is also $2$.  For order $3$, conditions associated
with the trees of that order must be satisfied and, in this case, again using the even order
result, the order must be $4$.

We present three examples of symmetric methods with $rs=23$ and order $4$.
None of these can be G-symplectic because this additional requirement would contradict
the parasitism-free condition.

\subsubsection*{First $4123$ method}
The tableau for the method, which we will name $4123A$, is
\begin{equation*}
\left[
\begin{array}{cc}
A & U\\
B &V
\end{array}
\right]
=
\left[
\begin{array}{ccc|c@{\enspace}r}
\frac16&0&0&1&-\frac13\\
\frac16&\frac16&0&1 &\frac13\\
\frac13&\frac23&\frac16&1 &-\frac13\\
\hline
\frac14&\frac12&\frac14&1 &0\\
\frac14&\frac12&\frac14&0 &-1\\
\end{array}
\right].
\end{equation*}
To verify the order $4$, we need to find a starting method, $y^{[0]} =S_h y_0$ such
that the output after a single step of the method is $y^{[1]} = S_h y(x_0+h) + O(h^5)$.
For this method a suitable choice of the starting values is given by
\begin{equation*}
y_1^{[0]}= y(x_0) + \frac{h^2}{48} y''(x_0),\qquad y_2^{[0]}=  \frac{h}{2} y'(x_0) - \frac{h^3}{32} y'''(x_0).
\end{equation*}
We note that $S_h=LS_{-h}$, as required for a symmetric starting method.
We   need to confirm that the result found by one step of the method is, to within $O(h^5)$,
 equal to 
\begin{align*}
y_1^{[1]}&= y(x_0+h) + \frac{h^2}{48} y''(x_0+h)\\
&=y(x_0) + hy'(x_0) + \frac{25h^2}{48} y''(x_0) + \frac{3h^3}{16} y'''(x_0) + \frac{5h^4}{96} y^{(4)}(x_0) \\
y_2^{[1]}&=  \frac{h}{2} y'(x_0+h) - \frac{h^3}{32} y'''(x_0+h)\\
&=\frac{h}2y'(x_0) + \frac{h^2}2 y''(x_0) + \frac{7h^3}{32} y'''(x_0) + \frac{5h^4}{96} y^{(4)}(x_0). \
\end{align*}
The B--series coefficients for $y^{[0]}_1$ and $y^{[0]}_2$,  corresponding to a tree $t$ are denoted by $\xi_1$ 
and $\xi_2$ respectively, with the target values of the components of $y^{[1]}$ given by the components of
$E\xi$.  These are shown in Table \ref{tbl:4123A} for the empty tree $\emptyset$ and for the $8$ trees
of order up to $4$.  Also shown are the B-series coefficients for the three stages, denoted by $\eta_i$ and the
stage derivatives $(\eta D)_i$, $i=1,2,3$.  Note that the table does not give values for $\eta_i(t)$ where $|t|=4$,
because these are not needed in the evaluation of $\eta D$ up to order $4$.
\begin{table}[h]
\caption{Verification of the order of the method $4123A$}
\label{tbl:4123A}
\begin{equation*}
\psset{xunit=6mm,yunit=6mm,dotsize=2.5pt,linewidth=1.1pt}
\begin{array}{r|cccc@{}c@{}cccc}
t & \emptyset &\ta&\tb&\tc&\td&\te&\tf&\tg&\th\\
\hline
\xi_1(t) &1&0& \frac1{48} &0&0&0&0&0&0\\
\xi_2(t) &0&\frac12& 0&-\frac1{16}\m&-\frac1{32}\m&0&0&0&0\\
\hline
\eta_1(t) & 1&0 & \frac1{48} & \frac1{48}&\frac1{72}\\
(\eta_1D)(t) & 0 & 1 &0&0&\frac1{48}& 0&0&\frac1{48}&\frac1{72}\\
\eta_2(t) &  1&  \frac12 &\frac5{48}&\frac1{48} & \frac1{96}\\
(\eta_2D)(t) & 0 & 1 &\frac12 &\frac14&\frac5{48}&\frac18&\frac5{96}&\frac1{48} & \frac1{96}\\
\eta_3(t) &  1& 1 &\frac{25}{48}&\frac{17}{48}&\frac{25}{144}\\
(\eta_3D)(t) & 0 & 1 & 1&1&\frac{25}{48}&1&\frac{25}{48}&\frac{17}{48}&\frac{25}{144}\\
\hline
(E\eta_1)(t)&1&1&\frac{25}{48}&\frac38&\frac3{16}&\frac5{16}& \frac5{32} &\frac5{48} &\frac{5}{96}  \\
(E\eta_2)(t)&0&\frac12&\frac12&\frac7{16}&\frac7{32}&\frac5{16}&\frac5{32}&\frac5{48}&\frac{5}{96}
\end{array}
\end{equation*}
\end{table}
Practical starting methods can be found 
in the form   \eqref{eq:genstart}
satisfying the order conditions for $x=[\tfrac12,\tfrac1{48},-\tfrac1{16},-\tfrac1{32}]$.
The solution is
\begin{equation}\label{eq:SF4123A}
\left[ -12,\quad
\begin{array}{c|cc}
\frac14 & \frac14 & \m0\\
0 & \frac1{12} & -\frac1{12}\\
\hline
 & \frac1{12} & -\frac1{8}
\end{array},\quad
\begin{array}{c|cc}
\frac1{24} & \frac1{24} & \m0\\
\frac7{24} & \frac1{8} & \m\frac1{6}\\
\hline
 & \frac1{8} & -\frac1{12}
\end{array}\right].
\end{equation}
Here, as for the other methods in this section, one may choose the starting method to be explicit at the price of a more implicit finishing method, as the following alternative starting--finishing combinations indicate:
\begin{equation*}
\left[ \frac{4}{7},\quad
\begin{array}{c|ccc}
0&0 & 0 &0\\
\frac{3}{4}& \frac{3}{4} & 0& 0\\
\frac{8}{7} & \frac{1}{4} & \frac{25}{28}& 0\\
\hline
 & \frac{3}{4} &  \frac{65}{324} &-\frac{49}{648}\m
\end{array},\quad
\begin{array}{c|ccc}
\m\frac{47}{56}&\frac{49}{648} &\m\frac{433}{567}& \m0\\
 -\frac{1}{8}&\frac{49}{648}  &  -\frac{65}{324} & \m0\\
 -\frac{7}{8}&\frac{49}{648} &-\frac{65}{324} &-\frac{3}{4}\\
\hline
 &\frac{49}{648}  & -\frac{65}{324} & -\frac{3}{4}
\end{array}\right],
\end{equation*}
\begin{equation*}
\left[ \frac{2\sqrt{15}}{5},\quad
\begin{array}{c|cc}
\frac{ \sqrt{15}}{12} & \frac{31\sqrt{15}}{180} &-\frac{4\sqrt{15}}{45}\\
 \frac{ 7\sqrt{15}}{12}& \frac{169\sqrt{15}}{720}&-\frac{4\sqrt{15}}{45} \\\hline
  &  \frac{31\sqrt{15}}{180}  & -\frac{4\sqrt{15}}{45}
\end{array},\quad
\begin{array}{c|cc}
0&0 & 0\\
 \frac{\sqrt{15}}{16} &\frac{\sqrt{15}}{16}  & 0 \\
\hline
     &  -\frac{31\sqrt{15}}{180}\m  & \frac{4\sqrt{15}}{45}
\end{array}\right].
\end{equation*}

\subsection*{Second $4123$ method}
The following method, which we will denote as $4123B$, has the advantage
of a zero on the diagonal.
\begin{equation*}
\left[
\begin{array}{cc}
A & U\\
B &V
\end{array}
\right]
=
\left[
\begin{array}{ccc|cr}
\frac14&0&0&1&-\frac16\\
\frac12&0&0&1 &-\frac16\\
\frac12&0&\frac14&1 &-\frac16\\
\hline
\frac13&\frac13&\frac13&1 &0\\
1&-2\m&1&0 &-1\\
\end{array}
\right].
\end{equation*}
An analysis, similar to method 4123A, verifies order 4 with
$x=[0,-\tfrac1{48},\tfrac1{16},\tfrac1{16}]$.
Although starting and finishing methods similar to \eqref{eq:SF4123A}
do not exist, using two stage Runge--Kutta methods, they do exist
with three stages.  A possible triple is:  
\begin{align*}
&\left[ -12, \quad
\begin{array}{c|ccc}
0&0&0&0\\
1& \frac1{24} & \frac{23}{24}&0\\
\frac34 & \frac1{24}&\frac1{24} & \frac23\\
\hline
&\frac1{24}&\frac1{24} & -\frac1{12}\m
\end{array}, \quad
\begin{array}{c|ccc}
\frac34&\frac34&0&0\\
1& \frac1{12} & \frac{11}{12}&0\\
0 &\frac1{12}&-\frac1{24}\m & -\frac1{24}\m\\
\hline
&\frac1{12}&-\frac1{24}\m & -\frac1{24}\m
\end{array}\right].
\end{align*}

\subsection*{A $4223$  method}
The following method, named 4223A,  is found to have stage order $2$,
\begin{equation*}
\left[
\begin{array}{cc}
A & U\\
B &V
\end{array}
\right]
=
\left[
\begin{array}{ccc|cr}
\frac18&0&0&1&-\frac12\\
0&\frac14&0&1 &1\\
\frac14&\frac34&\frac18&1 &-\frac12\\
\hline
\frac16&\frac23&\frac16&1 &0\\
\frac16&\frac16&\frac16&0 &-1\\
\end{array}
\right].
\end{equation*}
Suitable starting values are
\begin{equation*}
y_1^{[0]}= y(x_0) ,\qquad y_2^{[0]}=  \frac{h}{4} y'(x_0) - \frac{h^3}{96} y'''(x_0),
\end{equation*}
corresponding to $x=[\frac14,0,-\tfrac1{48},-\tfrac1{96}]$.
No finishing method is required other than $y_n=y_1^{[n]}$ and the starting
method can be defined by $y_2^{[0]}= \tfrac12(R_h y_0 -R_{-h}y_0)$ where
$R_h$ is the Runge--Kutta method with tableau
\begin{equation*}
\begin{array}{c|cc@{}c}
0 & \\
\frac14 & \frac14\\
\frac14 & 0 & \frac14\\
\hline
& \frac7{12} & -\frac1{6}\m &-\frac1{6}
\end{array}.
\end{equation*}

\subsection*{A special $4123$ method}

The method to be  named 4123C is defined by
\begin{equation*}
\left[
\begin{array}{cc}
A & U\\
B &V
\end{array}
\right]
=
\left[
\begin{array}{ccc|cc@{}}
0&0&0&1&1\\
\frac7{12}&\frac5{12}&0&1&-1\m\\
\frac1{12}&-\frac16\m&\frac1{12}&1 &1\\
\hline
\frac13&\frac13&\frac13&1 &0\\
\frac14&\frac12&\frac14&0 &-1\m\\
\end{array}
\right].
\end{equation*}
This method is interesting because, although it is symmetric,
the diagonal of $A$ is \emph{not} symmetric.

Using $x=[\tfrac12, -\tfrac1{24},-\tfrac18,-\tfrac1{48}]$, a
starting--finishing triple is found:
\begin{equation*}
\left[
12,\quad
\begin{array}{c|cc}
0& 0 & 0\\
\frac14 & \frac5{24} & \frac1{24}\\
\hline
 & \frac5{24} & -\frac1{6}\m
\end{array}, \quad
\begin{array}{c|cc}
\m\frac5{24} & \frac5{24} & \m0\\
-\frac1{24} & \frac1{6} & -\frac5{24}\\
\hline
 & \frac1{6} & -\frac5{24}
\end{array}\right].
\end{equation*}
\subsection{Methods with $rs=24$}
\subsubsection*{First method with $L=\diag(1,-1)$}
We now search for symmetric methods of the form
\begin{equation*}
\left[
\begin{array}{rrrr|cc@{}}
&&&&1 & u_1\\
&\begin{pspicture}(0,0)(0,0)\rput(2.75mm,-2.5mm){$A$}\end{pspicture}&&& 1& u_2\\
&&&& 1 & u_2\\
&&&& 1 & u_1\\
\hline
b_1 & b_2 &b_2 & b_1 & 1&0\\
\beta_1 & \beta_2 &\beta_2&\beta_1 &0 &-1\m
\end{array}
\right],
\end{equation*}
with $\beta_1 u_1+\beta_2 u_2=0$ (to eliminate parasitism) and order $4$.
We give an example which will be named 4124A:
\begin{equation*}
\left[
\begin{array}{cc}
A & U\\
B &V
\end{array}
\right]
=
\left[
\begin{array}{rrrr|rr}
0&0&0&0&1 & \frac16\\
0&\frac14&0&0& 1& \frac16\\
0&\frac12&\frac14&0& 1 & \frac16\\
0&\frac12&\frac12&0& 1 & \frac16\\
\hline
-\frac16 & \frac23 &\frac23 & -\frac16 & 1&0\\
-1 & 1 &1&-1 &0 &-1
\end{array}
\right].
\end{equation*}
This method has the same symmetry, defined by $L=\diag(1,-1)$, as in Subsection \ref{SS:23} and it is possible to use similar starting and finishing methods, An analysis, which will not be included, leads to a starting--finishing pair
defined from $x=[0,-\tfrac1{24}, -\tfrac3{16},-\tfrac1{16}]$.  The triple defining the pair
is
\begin{equation*}
\left[
18, \quad
\begin{array}{c|cc}
0& 0 & 0\\
\frac14 & \frac1{6} & \frac1{12}\\
\hline
 & \frac1{6} & -\frac1{6}\m
\end{array},\quad
\begin{array}{c|cc}
\frac1{4} & \frac1{4} & \m0\\
0 & \frac1{6} & -\frac1{6}\\
\hline
 & \frac1{6} & -\frac16
\end{array}
\right].
\end{equation*}
 
\subsubsection*{G-symplectic method with $L=\diag(1,-1)$}
By imposing the requirements of Theorem \ref{theo:gsym},
a G-symplectic symmetric method can be constructed
with $G=\diag(1,-\frac13)$, $D=\diag(-\frac16,\frac23,\frac23,-\frac16)$
and order $4$.
This method, denoted by 4124B, has the tableau
\begin{equation*}
\left[
\begin{array}{cc}
A & U\\
B &V
\end{array}
\right]
=
\left[
\begin{array}{cccc|cc}
\m\frac16&\m0&\m0&\m0&\m1 & \m1\\
\m\frac1{12}&\m\frac1{12}&\m0&\m0& \m1& \m\frac12\\
\m\frac1{12}&\m\frac16&\m\frac1{12}&\m0& \m1 & \m\frac12\\
\m\frac13&-\frac13&-\frac13& \m\frac16 &\m1& \m1\\
\hline
-\frac16 & \m\frac23 &\m\frac23 & -\frac16 & \m1&\m0\\
-\frac12 & \m1 &\m1&-\frac12 &\m0 &-1
\end{array}
\right].
\end{equation*}
The starting--finishing pair, defined from
$x=[\tfrac12, -\tfrac1{24}, -\tfrac{19}{144},-\tfrac1{36}]$
is given by
\begin{equation*}
\left[
2\sqrt{38},\quad
\begin{array}{c|cc}
0&0&\m0\\
\frac{\sqrt{38}}{24}&\frac{5\sqrt{38}}{152}&\m\frac{\sqrt{38}}{114}\\
\hline
 & \frac{5\sqrt{38}}{152}&-\frac{\sqrt{38}}{38}
\end{array},\quad
\begin{array}{c|cc}
\m\frac{2\sqrt{38}}{57}&\frac{2\sqrt{38}}{57}&\m0\\
-\frac{\sqrt{38}}{152}  & \frac{\sqrt{38}}{38}&-\frac{5\sqrt{38}}{152}\\
\hline
&\frac{\sqrt{38}}{38}&-\frac{\sqrt{38}}{152}
\end{array}
\right].
\end{equation*}

\subsubsection*{Fourth order symmetric methods with $L=\diag(1,1)$}
We will derive symmetric parasitism--free methods with $L=I$ and $a_{11}=0$, based on the assumptions
\begin{align}
b\tr \1 &= 1,\label{eq:4124Ibc}\\
b\tr c^2 &= \tfrac12,\label{eq:4124Ibc2}\\
b\tr A c &=\tfrac16. \label{eq:4124IbAc}
\end{align}
From \eqref{eq:4124Ibc} and  \eqref{eq:4124Ibc2}, it is found that
\begin{equation*}
b_2 =\frac{1}{12 c_2(1-c_2)},\qquad b_1=\frac12-b_2.
\end{equation*}
Without loss of generality, because we can use a diagonal scaling
transformation, assume $u_1=1$ and, to eliminate parasitism,
it follows that $\beta_1=-u_2 \beta_2$.   We will impose the condition
$a_{11}=a_{44}=0$, implying that $\beta_1=b_1$.
From $A+PAP =UV^{-1}B$, and the requirement that $A$ is lower triangular we find that
\begin{equation*}
A = \left[
\begin{array}{cccc}
0 & 0 & 0 &0\\
b_1 (1-u_2)&\frac14+x&0&0\\
b_1 (1+u_2)&b_2-b_1&\frac14-x&0\\
2 b_1&b_2 - \frac{b_1}{u_2}&b_2+\frac{b_1}{u_2}&0
\end{array}
\right],
\end{equation*}
where $x$ is arbitrary.
The value of $u_2$ is determined by the requirement that $A\1=c$ and this gives
\begin{equation*}
u_2 = \frac{-12c_2^3+21c_2^2-9c_2+1}{6c_2^2-6c_2+1} +  \frac{12c_2(c_2-1)x}{6c_2^2-6c_2+1}.
\end{equation*}
For \eqref{eq:4124IbAc} to be satisfied, a complicated condition is obtained.  This is  satisfied for any value of $x$ if and only if $c_2=\frac12$ and this is the value
that will be selected.
We present the matrices defining the method in three cases
$x=-\frac14$, $x=0$ and $x=\frac14$. We denote the corresponding methods as 4124C, 4124D and 4124E:
\begin{equation*}
\left[
\begin{array}{cc}
A & U\\
B &V
\end{array}
\right]
=
\left[
\begin{array}{ccc@{}c|cc@{}}
0&0&0&0&1&1\\
\frac12&0&0&0&1&-2\m\\
-\frac1{6}\m&\frac16&\frac12&0&1&2\\
\frac13&\frac5{12}&\frac14&0&1&-1\m\\
\hline
\frac16&\frac13&\frac13&\frac16&1&0\\
\frac1{6}&\frac1{12}&-\frac1{12}\m&-\frac1{6}\m&0&-1\m
\end{array}
\right],
\end{equation*}
\begin{equation*}
\left[
\begin{array}{cc}
A & U\\
B &V
\end{array}
\right]
=
\left[
\begin{array}{ccc@{}c|cc@{}}
0&0&0&0&1&1\\
\frac14&\frac14&0&0&1&-\frac12\m\\
\frac1{12}&\frac16&\frac14&0&1&\frac12\\
\frac13&\frac2{3}&0&0&1&-1\m\\
\hline
\frac16&\frac13&\frac13&\frac16&1&0\\
\frac1{6}&\frac1{3}&-\frac1{3}\m&-\frac1{6}\m&0&-1\m
\end{array}
\right],
\end{equation*}
\begin{equation*}
\left[
\begin{array}{cc}
A & U\\
B &V
\end{array}
\right]
=
\left[
\begin{array}{cccc|cc@{}}
0&0&0&0&1&1\\
0&\frac12&0&0&1&1\\
\frac1{3}&\frac16&0&0&1&-1\m\\
\frac13&\frac1{6}&\frac12&0&1&-1\m\\
\hline
\frac16&\frac13&\frac13&\frac16&1&0\\
\frac1{6}&-\frac1{6}\m&\frac1{6}&-\frac1{6}\m&0&-1\m
\end{array}
\right].
\end{equation*}
Each of these three methods has order  4 for identical conditions on the starting method.  These are defined by
\begin{equation*}
\psset{xunit=6mm,yunit=6mm,dotsize=2.5pt,linewidth=1.1pt}
\begin{array}{r@{\hspace{5pt}}|@{\hspace{5pt}}c@{\hspace{10pt}}ccc@{\hspace{10pt}}c@{\hspace{10pt}}c@{\hspace{10pt}}c@{\hspace{10pt}}c@{\hspace{10pt}}c}
t & \emptyset &\ta&\tb&\tc&\td&\te&\tf&\tg&\th\\
\hline
\xi_1(t) &1&0&0& 0 &0 &0&0&0&0\\
\xi_2(t) &0&0& -\frac1{12}\m&0&0&\frac1{24}&\frac1{36}&\frac1{48}&\frac1{144}\\
\end{array}.
\end{equation*}
Because $\xi_1$ corresponds to  the identity mapping, the finishing method can be chosen 
as $F_h y^{[0]} = y_1^{[0]}$.

A practical starting method is available in the form $y_1^{[0]} = y_0$ and $y_2^{[0]} = R_h y_0-y_0$,
where $R_h$ is defined by the Runge--Kutta tableau

\begin{equation*}
\begin{array}{c|cccc}
0 &\\
-\frac12\m &-\frac12\m\\
\frac12 & \frac56 & -\frac13\m\\
1 & \frac43 & -\frac56\m & \frac12\\
\hline
& \frac14 & 0 & -\frac13\m & \frac1{12}
\end{array}.
\end{equation*}

\section{Simulations}\label{sec:simul}

We compare the long--time numerical behaviour of several symmetric general linear methods  with that of two symmetric Runge--Kutta methods. One of these RK methods is symplectic, and two of the GLMs are $G$--symplectic. The four low--dimensional Hamiltonian test problems we consider have one or more of the following properties: absence of symmetry, non--separability, chaotic behaviour, or large time derivatives. We compare the efficiency of the methods, as well as their ability   to conserve invariants over long times.

\subsection{The problems}
\paragraph{H\'{e}non--Heiles}
The equations of motion  are defined by the separable Hamiltonian  
\begin{equation*}
H(p,q)=\tfrac{1}{2}(p_{1}^{2}+p_{2}^{2})+\tfrac{1}{2}( q_{1}^{2}+q_{2}^{2})+q_1 q_2^2-\tfrac{1}{3} q_2^3.
\end{equation*}
 The initial conditions are taken \cite{hlw} so that $H=\frac{1}{7}$ :
\begin{equation*}
[p_1,p_2,q_1,q_2]\tr = \Big[\sqrt{\tfrac{152}{875}},0.2,0,0.3\Big]\tr.
\end{equation*}
The solution is chaotic. In the experiments, the time-step $h=0.25$, and the final time $T=10^6$.
\paragraph{Double pendulum}
The equations of motion  are defined by the non--separable Hamiltonian  
\begin{equation*}
H(p,q)=\frac{p_{1}^{2}+2p_{2}^{2}-2p_{1}p_{2}\cos(q_1-q_2)}{2(1+\sin^{2}(q_1-q_2))}-\cos(q_2)-2\cos(q_1).
\end{equation*}
For $y:=[p\tr,q\tr]\tr$, let $f(y):=J^{-1}\nabla H(y)$, where
 \begin{equation*}
J:= \left[\begin{array}{rr} 0& I\\-I&0\end{array}\right] = 
 \left[\begin{array}{rrrr} 
0 & 0& 1 & 0\\
0 & 0& 0 & 1\\
-1 & 0 & 0 & 0\\
0 & -1 & 0 & 0
\end{array}\right].
\end{equation*}
For $R=$ diag$(-1,-1,1,1)$  or diag$(1,1,-1,-1)$, the system is $\rho$-reversible \cite{st88}; i.e.,
\begin{equation*}
 f(Ry)=-Rf(y),\qquad \mbox{ for }\quad y\in \R^4.
\end{equation*} 

The initial conditions are taken to be
\begin{equation*}
[p_1,p_2,q_1,q_2]\tr = \Big[0,0,3.14,-3.1\Big]\tr.
\end{equation*}
The solution is chaotic with large time--derivatives. Here, $h=0.01$ and  $T=10^4$.
\paragraph{Kepler problem}
This describes 
 the motion of a planet revolving around the sun, which is considered to be fixed at the origin. The equations of motion are defined by the separable Hamiltonian,
\begin{equation*}
H(p,q)=\tfrac{1}{2}(p_{1}^{2}+p_{2}^{2})- \frac{1}{\sqrt{q_{1}^{2}+q_{2}^{2}}},
\end{equation*}
where $q=[q_{1},q_{2}]\tr$ are the generalized position coordinates of the body and $p=[p_{1},p_{2}]\tr$ are the generalized momenta.  For $y:=[p\tr,q\tr]\tr$, let $f(y):=J^{-1}\nabla H(y)$.
Then, the system is multiply $\rho$-reversible  for 
\begin{equation*}
R= \mbox{diag}(-1,-1, 1, 1), \;\mbox{diag}(1,1, -1, -1),\;\mbox{diag}(1,-1, -1, 1),\;\;\mbox{ or }\;\;\mbox{diag}(-1,1, 1, -1).
\end{equation*}
 The initial conditions are taken to be
\begin{equation*}
  [p_1,p_2,q_1,q_2]\tr = \Big[0,\sqrt{\tfrac{1+e}{1-e}},1-e,0\Big]\tr,
\end{equation*}
for $e=0.6$. The solution is a closed orbit with moderately large  time derivatives.  The angular momentum error is plotted in addition to the Hamiltonian error. Here, $h=0.01$ and  $T=10^4$.

\paragraph{Transformed Lotka--Volterra}
The equations of motion are defined by the separable Hamiltonian 
\begin{equation*}
H(p,q)=p-\exp(p)+2q-\exp(q).
\end{equation*}
This system lacks any obvious symmetry. The initial conditions are taken to be
\begin{equation*}
[p,q]\tr = \Big[\ln 2,\ln 3\Big]\tr.
\end{equation*}
The solution is a non--symmetric orbit in the positive quadrant $p,\,q>0$. Here, $h=0.1$ and  $T=10^3$.

\subsection{Methods used in the simulations}
The following  methods are competitively compared in the initial simulations:
\begin{itemize}
\item Method 4223 from Subsection 6.2: this is symmetric.
\item Method 4124B from Subsection 6.3: this is  symmeric and $G$--symplectic.
\item  Method 4124D from  Subsection 6.3: this is symmetric. 
\item  The DIRK  4115 method: a $5$--step Suzuki composition of the implicit midpoint 2111 method, see \cite[Chapter II]{hlw}: this is symmetric and symplectic. (This is  more efficient than the familiar $3$--step 4113 DIRK composition method due to far smaller error constants.)
\end{itemize}
Simulations with two other methods serve to interpret  the initial results:
\begin{itemize}
\item The  4113 Lobatto IIIB method, \cite[Chapter XI]{hlw}: this symmetric, but not symplectic.
\item Method 4124P from  \cite{bhhn}: this is symmetric and $G$--symplectic.
\end{itemize}

\subsection{Numerical simulations}
As with long--time Runge--Kutta experiments, we use compensated summation and a tight error tolerance for implicit iterations in an attempt to reduce the effects of rounding error. In order to reduce potential parasitic effects, we also use an accurate starting method for the multivalue experiments.

\paragraph{Timings}

 In Table \ref{tb:time} details of the CPU and stopwatch times for each of the experiments are summarised. 
  \begin{table}
 \begin{center}
 \vspace*{.1in}
 \caption{Timings for specific methods with various problems}\label{tb:time}
\begin{tabular}[htbp]{|@{\hspace{2pt}}c@{\hspace{2pt}}|@{\hspace{2pt}}c@{\hspace{2pt}}|c|c|c|c|}
		\hline
		& 			& H-H 			& DP 			& K 			& TLV 		\\ \hline
		4223	& Stopwatch	& $5.7160\times10^3$ s 	& $1.5971\times10^3$ s	& $1.0489\times10^3$ s	& \phantom{0}8.8033 s 	\\
		& CPU 		& $5.7846\times10^3$ s 	& $1.6153\times10^3$ s	& $1.0629\times10^3$ s	& \phantom{0}8.7517 s	\\ \hline
		4124B   & Stopwatch & $7.4698\times10^3$ s	& $2.1707\times10^3$ s	& $1.4358\times10^3$ s 	& 11.7559	s\\
		& CPU 		& $7.5623\times10^3$ s	& $2.1830\times10^3$ s	& $1.4512\times10^3$ s	& 11.8249	s\\ \hline
		4124D	& Stopwatch & $4.9182\times10^3$ s	& $1.4974\times10^3$ s	& $0.9398\times10^3$	 s	& \phantom{0}7.9902	s\\
		& CPU 		& $4.9746\times10^3$ s	& $1.4908\times10^3$ s	& $0.9512\times10^3$ s	& \phantom{0}7.9405	s\\ \hline
		5-DIRK 	& Stopwatch & $9.3599\times10^3$ s	& $2.5711\times10^3$ s	& $1.5986\times10^3$ s	& 13.1807	s\\
		& CPU 		& $9.9014\times10^3$ s	& $2.7857\times10^3$ s	& $1.6254\times10^3$ s	& 13.8061	s\\
		\hline
	\end{tabular}
\end{center}
\end{table}

\begin{figure}
	\centering
	\subfloat[4223]{
		\includegraphics[width=\textwidth]{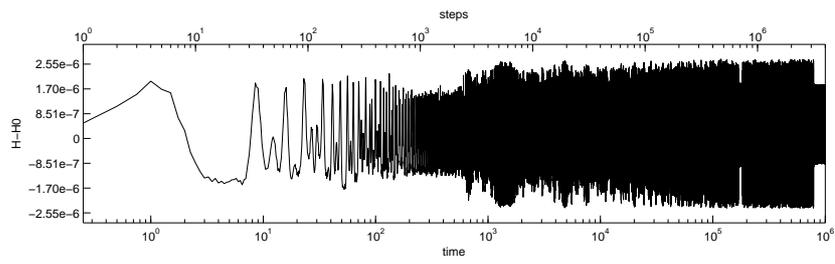}
    }\\
	\subfloat[4124B]{
		\includegraphics[width=\textwidth]{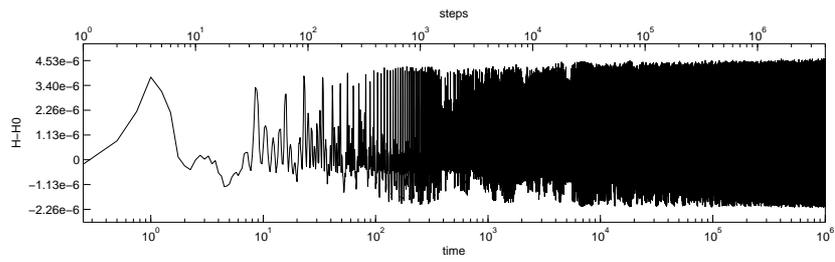}
   }\\
	\subfloat[4124D]{
		\includegraphics[width=\textwidth]{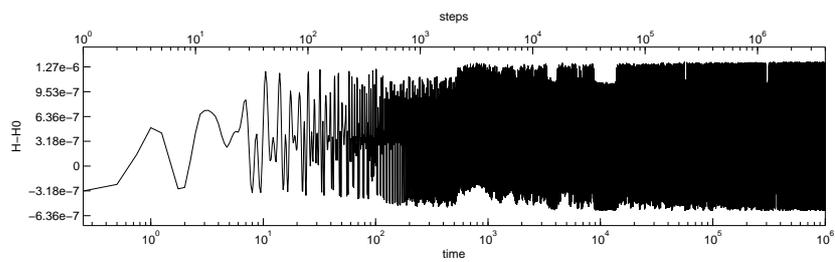}
   }\\
	\subfloat[5-jump DIRK]{
		\includegraphics[width=\textwidth]{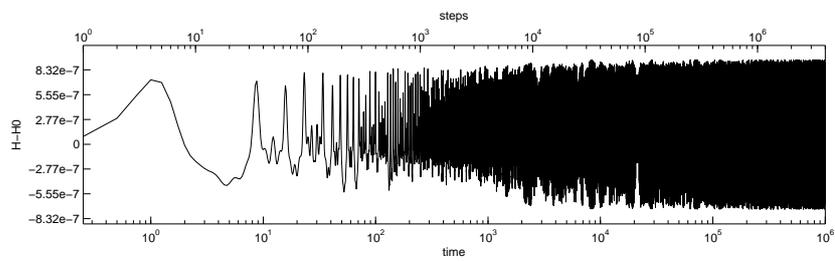}
    }
    \caption{H\'{e}non-Heiles problem}
\end{figure}
\begin{figure}
	\centering
	\subfloat[4223]{
		\includegraphics[width=\textwidth]{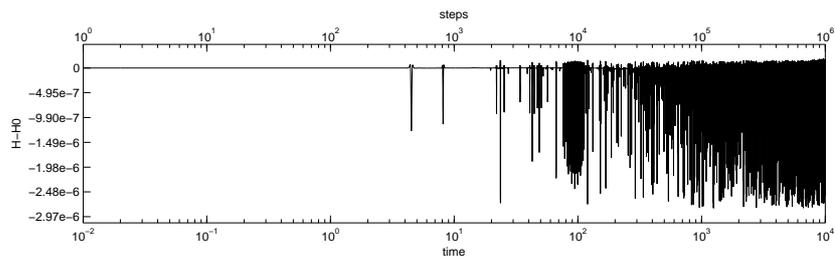}
	}\\
	\subfloat[4124B]{
		\includegraphics[width=\textwidth]{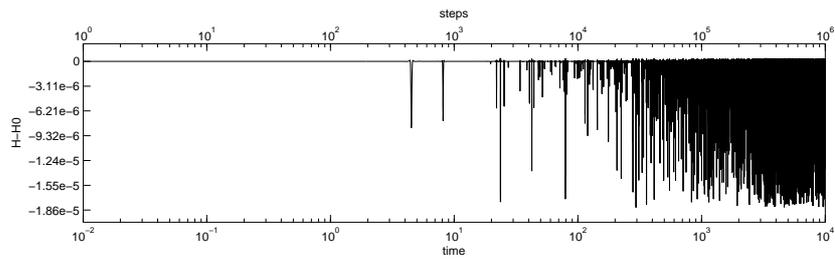}
	}\\
	\subfloat[4124D]{
		\includegraphics[width=\textwidth]{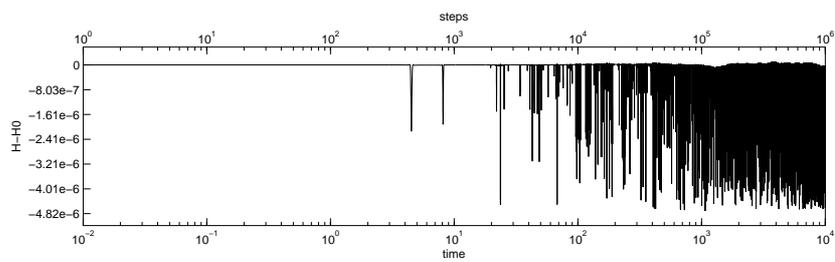}
	}\\
	\subfloat[5-jump DIRK]{
		\includegraphics[width=\textwidth]{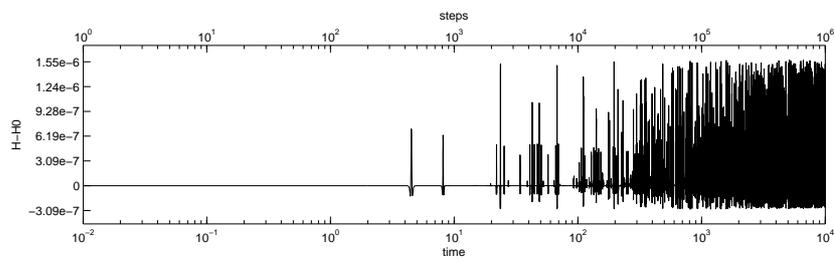}
	}
	\caption{Double pendulum problem}
\end{figure}
\begin{figure}
	\centering
	\subfloat[4223]{
		\includegraphics[width=\textwidth]{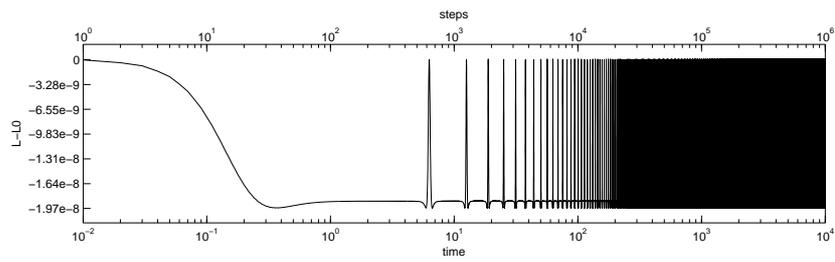}
	}\\
	\subfloat[4124B]{
		\includegraphics[width=\textwidth]{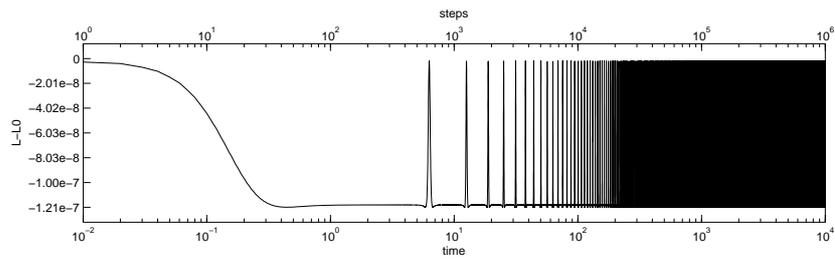}
	}\\
	\subfloat[4124D]{
		\includegraphics[width=\textwidth]{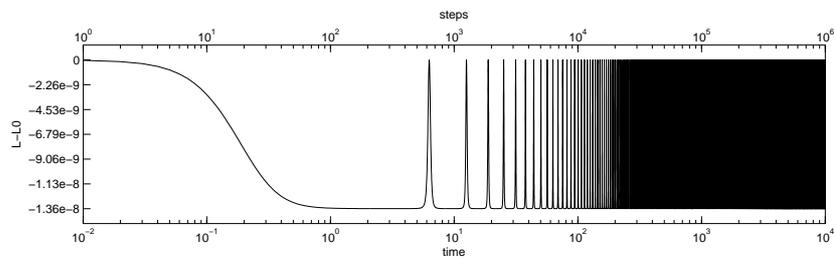}
	}\\
	\subfloat[5-jump DIRK]{
		\includegraphics[width=\textwidth]{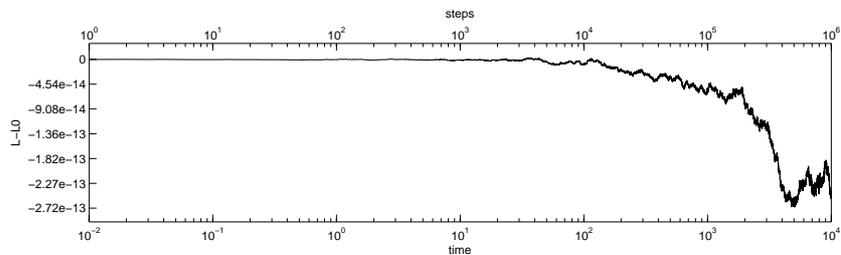}
	}
	\caption{Kepler problem: Angular momentum}
\end{figure}
\begin{figure}
	\centering
	\subfloat[4223]{
		\includegraphics[width=\textwidth]{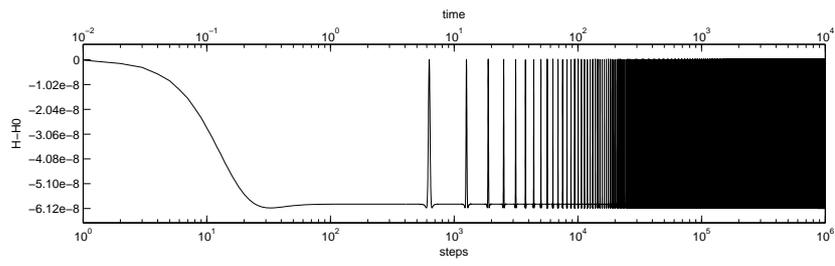}
	}\\
	\subfloat[4124B]{
		\includegraphics[width=\textwidth]{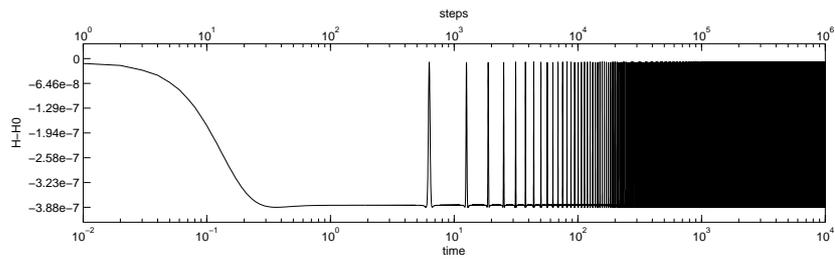}
	}\\
	\subfloat[4124D]{
		\includegraphics[width=\textwidth]{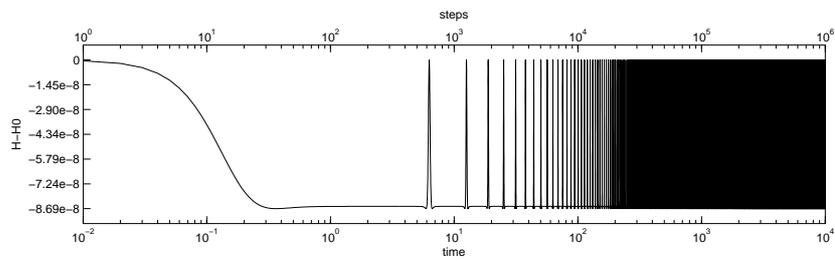}
	}\\
	\subfloat[5-jump DIRK]{
		\includegraphics[width=\textwidth]{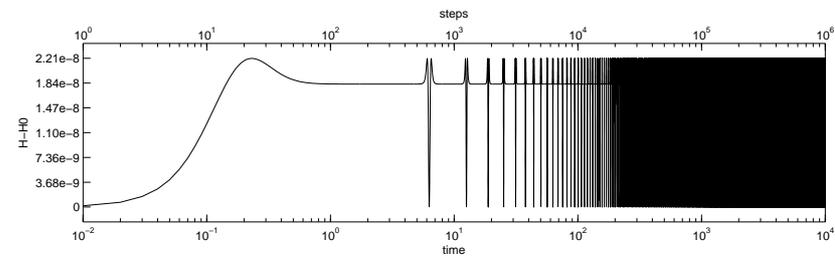}
	}
	\caption{Kepler problem: Hamiltonian}
\end{figure}
\begin{figure}
	\centering
	\subfloat[4223]{
		\includegraphics[width=\textwidth]{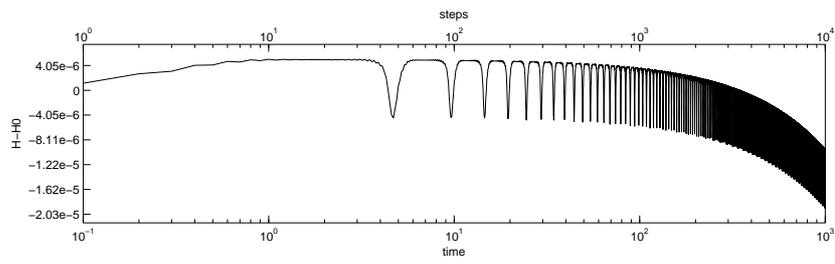}
	}\\
	\subfloat[4124B]{
		\includegraphics[width=\textwidth]{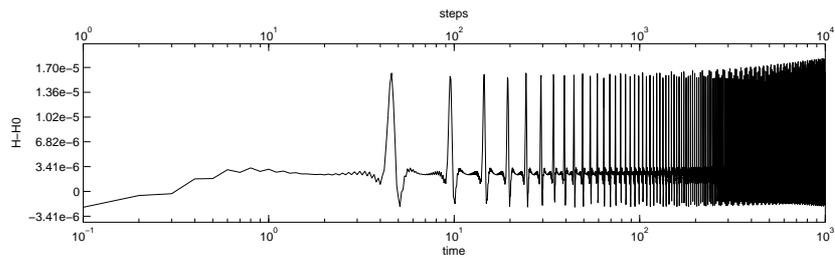}
	}\\
	\subfloat[4124D]{
		\includegraphics[width=\textwidth]{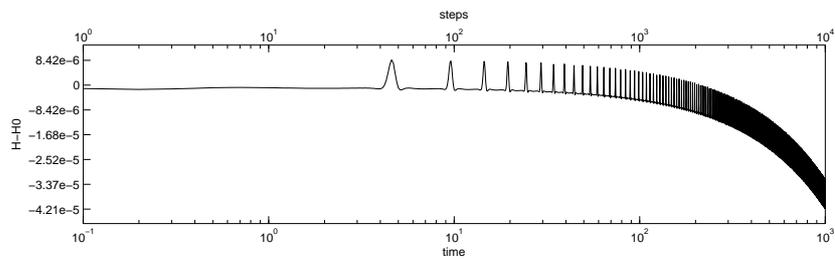}
	}\\
	\subfloat[5-jump DIRK]{
		\includegraphics[width=\textwidth]{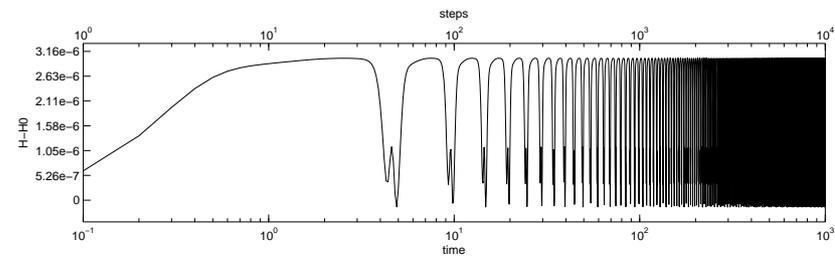}
	}
	\caption{Transformed Lotka-Volterra problem}
\end{figure}

\begin{figure}
	\centering
	\subfloat[H\'{e}non-Heiles]{
		\includegraphics[width=\textwidth]{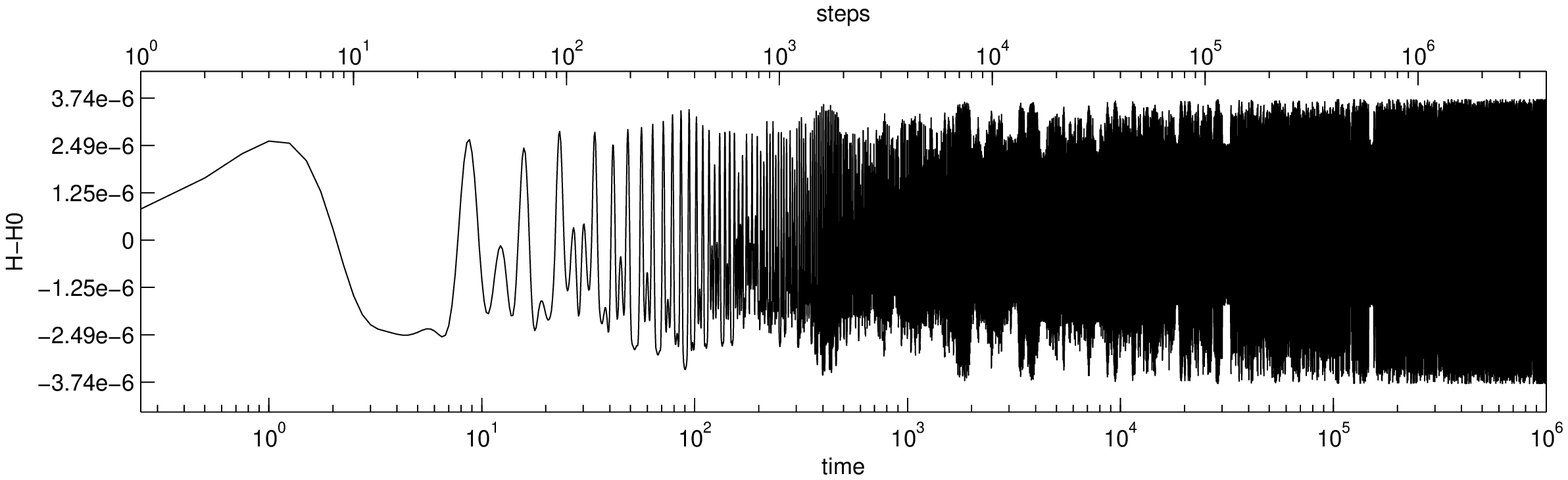}
	}\\
	\subfloat[Double Pendulum]{
		\includegraphics[width=\textwidth]{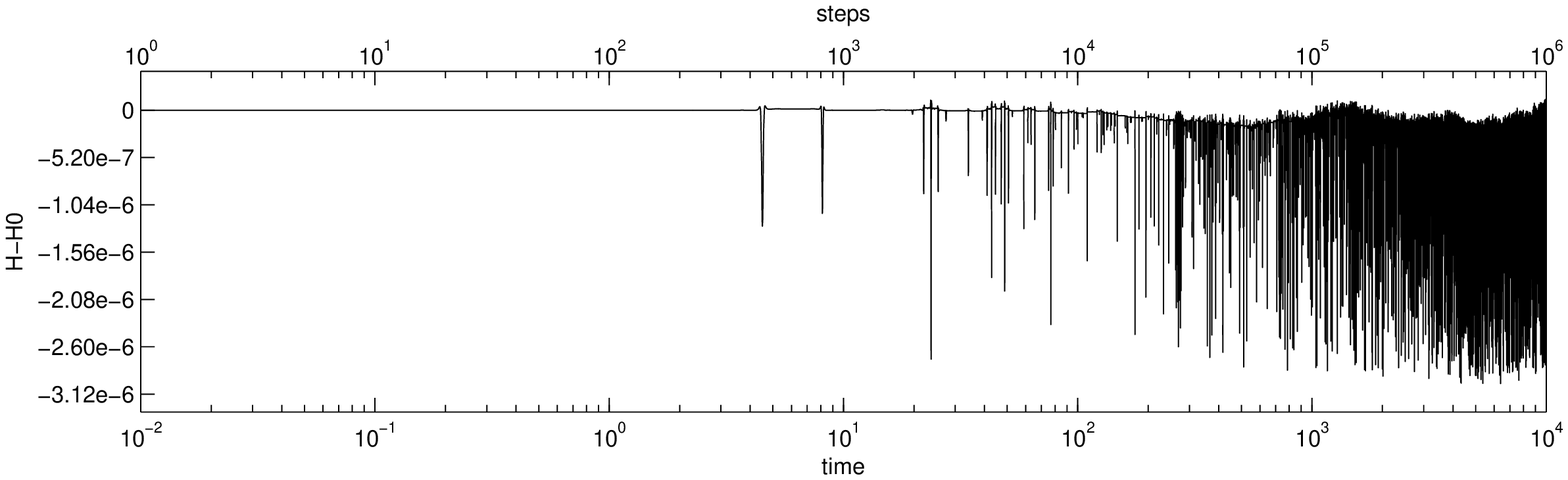}
	}\\
	\subfloat[Transformed Lotka Volterra]{
		\includegraphics[width=\textwidth]{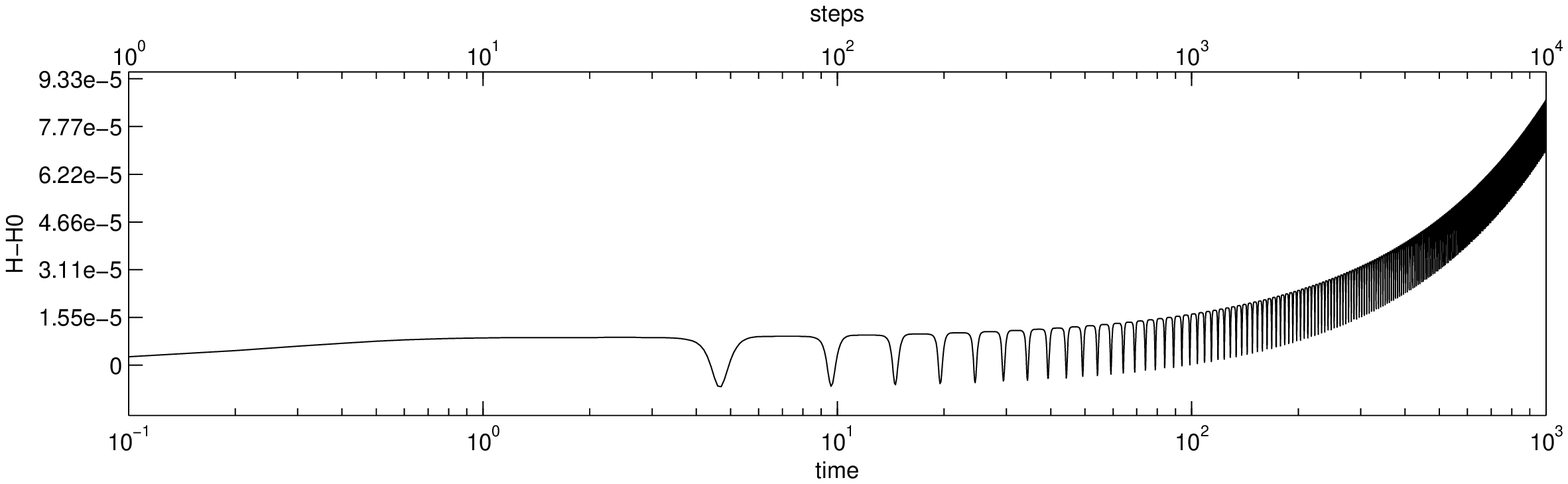}
	}
	\caption{Lobatto IIIB Experiments}
\end{figure}

\begin{figure}
	\centering
	\subfloat[Kepler: Angular Momentum]{
		\includegraphics[width=\textwidth]{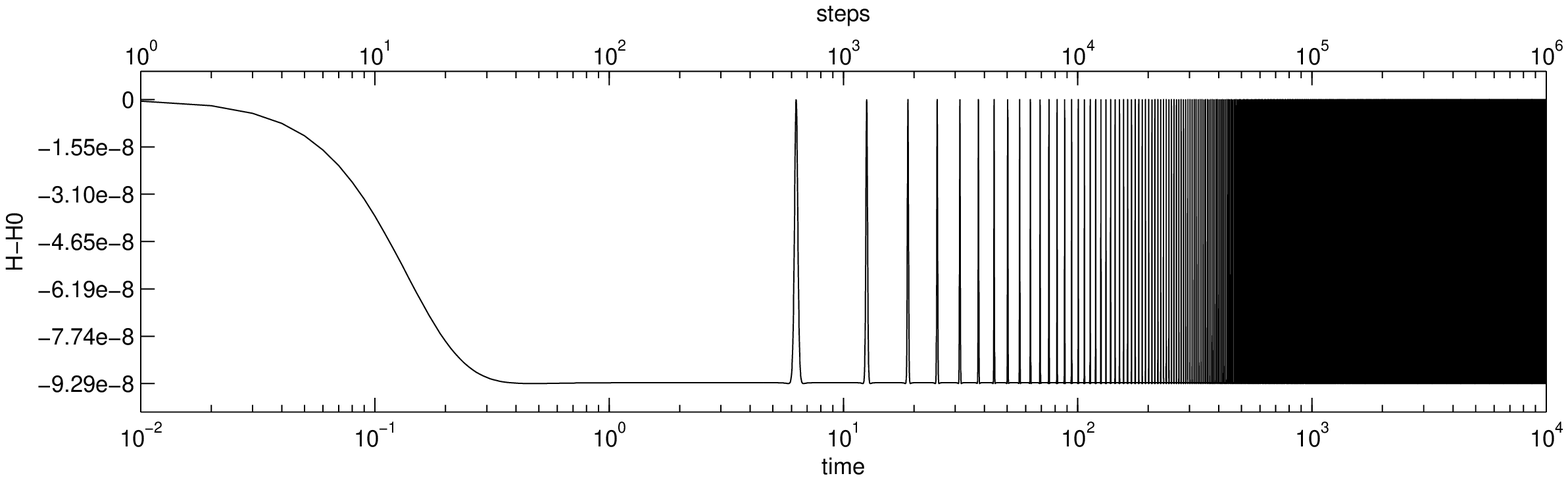}
	}\\
	\subfloat[Kepler: Hamiltonian]{
			\includegraphics[width=\textwidth]{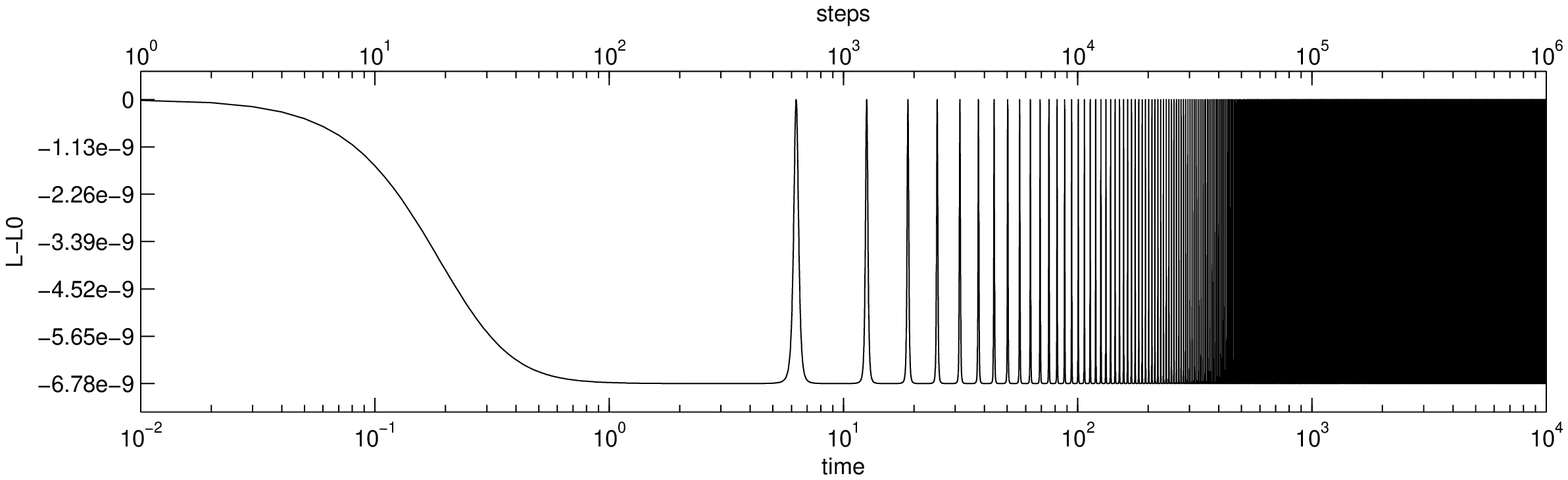}
	}
	\caption{Lobatto IIIB Experiments}
	\includegraphics[width=\textwidth]{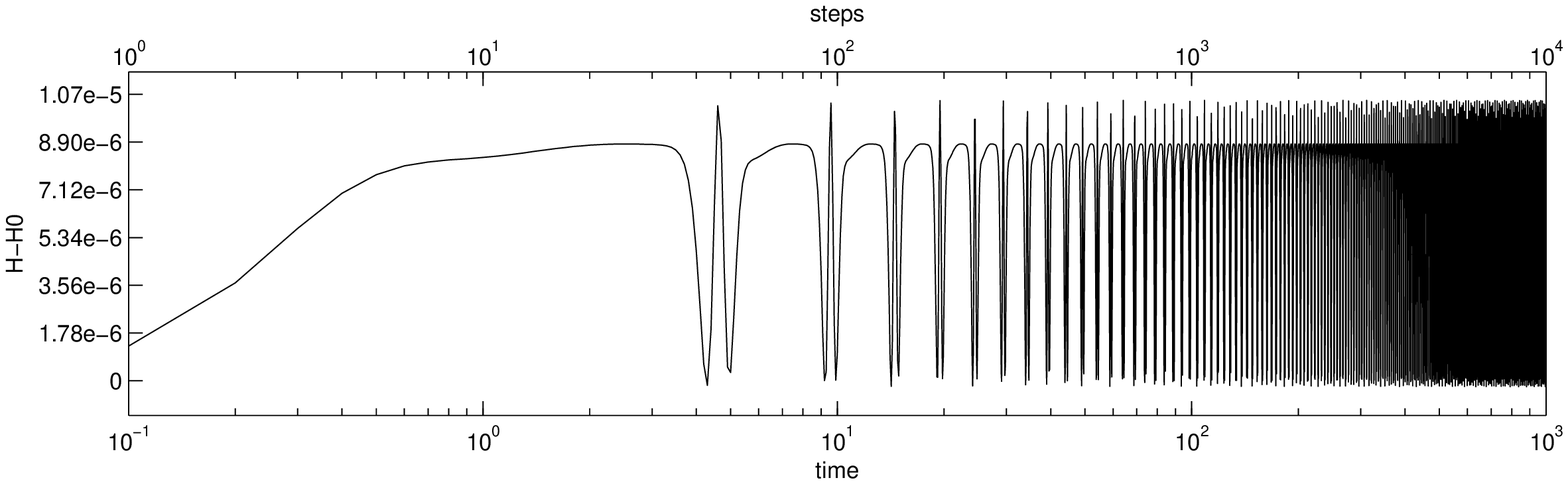}
	\caption{4124P: Transformed Lotka--Volterra}
\end{figure}

\subsection{Interpretation of the simulations} 
Numerical errors in computing the invariants  proceed from several potential sources:\\
(a) The underlying one--step method does not possess the  geometric properties required for the problem.\\
(b) Small periodic deviations occur, for example, when  a conjugate symplectic UOSM approximately conserves a modified Hamiltonian $H_h$, and $H_h$ deviates from the true Hamiltonian  by a small, roughly periodic quantity.\\
(c) Parasitism.

Classically, we think of  the effects of (a) and (c) as being clear--cut. However, the lack of symplecticity in high--order symmetric methods may take a very long time to manifest itself, see e.g. the behaviour of Lobatto IIIA in \cite{fhp}. This is also true of the effects of higher--order parasitism. In order to distinguish computationally the effects due to these two possible causes for the purely symmetric methods 4223 and 4124D, we have also presented results for the 4113 Lobatto IIIB method, which has similar properties to the UOSMs of 4223  and 4124D. Finally, we have also shown results for the symmetric $G$--symplectic 4124P method applied to the TLV problem, as an improvement on those of 4124B.

\paragraph{H\'{e}non--Heiles} All methods exhibit broadly similar conservation behaviour. 
In the absence of parasitism,  it is unsurprising that the results for the $G$--symplectic method 4124B should resemble those of the Suzuki 4115 DIRK. Also, the behaviour of the purely symmetric 4223 and 4124D methods may be explained in terms of their UOSMs, which are closely related to the 4113 Lobatto IIIB method. Following  the explanation of \cite{fhp} for symmetric Runge--Kutta methods,  the fact that $H(p,\,q)$ is a cubic polynomial  implies that the bushy trees in the numerical modified Hamiltonian vanish for order greater than $4$. This permits the existence of an exact modified Hamiltonian  for the UOSM of a symmetric non-symplectic method of order $4$. Thus, even for chaotic solutions, one can expect conservation of a modified Hamiltonian, in the absence of parasitism.
\paragraph{Double pendulum} All methods exhibit broadly similar conservation behaviour.  The system is $\rho$--reversible, but as the behaviour is chaotic, no   analog of the symmetric conservation result, \cite[Theorem XI.3.1]{hlw}, would seem to hold in this case.  Comparing the graphs for  4124D and 4113 Lobatto IIIB, we see broadly similar behaviour. We would therefore attribute any minor deviations in the Hamiltonian as due to properties of the UOSM, rather than to higher--order parasitism.
\paragraph{Kepler} The quadratic angular momentum is exactly conserved by the symplectic Suzuki 4115  DIRK, apart from random round--off errors.  Otherwise, all methods exhibit similar conservation behaviour. Again, in the absence of parasitism, this is what one would expect for the $G$--symplectic 4124B method. The conservation behaviour for 4223 and 4124D follows that of 4113 Lobatto IIIB. In this case, Kepler is both integrable and  reversible. Although the exact hypotheses of  \cite[Theorem XI.3.1]{hlw} are not satisfied here,  the situation is sufficiently similar to conjecture that symmetric UOSMs conserve invariants to   $O(h^p)$ uniformly in time, in the absence of parasitism.
\paragraph{Transformed Lotka--Volterra} This is a Hamiltonian problem without symmetry. In the initial simulations, only the Suzuki 4115 DIRK exhibits satisfactory approximate conservation of the Hamiltonian. The lack of symmetry in the problem and the lack of symplecticity in the UOSMs for 4223 and 4124D methods explains the poor results in those cases. Although 4124B  roughly conserves the Hamiltonian, there is a hint of parasitism at the end of the computation. The results for the $G$--symplectic method 4124P show that good conservation is possible for general linear methods.

\subsection{Conclusions}
All methods performed similarly on the first three problems: H\'{e}non--Heiles, Double Pendulum and Kepler, except that angular momentum was exactly conserved only by the exactly symplectic Runge--Kutta method. Although the errors for the Suzuki 4115 DIRK were about $4$ times smaller than those of 4124D  for the fixed time--steps used, the timings indicate that the latter method is slightly more efficient. Since 4124D only has $2$ implicit stages, one would expect this efficiency advantage to increase for larger problems.

Although parasitism did not develop for these problems, despite chaotic behaviour, large derivatives and long time--intervals, further theoretical work and computational tests would be needed before general linear methods could be applied to other problems with complete confidence. In the absence of parasitism, it appears that symmetric general linear methods behave in the same way as  symmetric Runge--Kutta methods, whllst  $G$--symplectic GLMs  behave similarly to  symplectic RKMs, with the exception that quadratic quantities are not   conserved exactly. In particular, symmetric GLMs are not suitable  for non--symmetric Hamiltonian systems, such as the transformed Lotka--Volterra problem.



\begin{acknowledgements}
JCB was  supported by Marsden Grant AMC1101.
ATH was assisted by LMS grant 41125.
TJTN was supported by a scholarship from EPSRC UK.
\end{acknowledgements}


\end{document}